\tikzstyle{vertex}=[circle,draw=black,fill=black,inner sep=0,minimum size=5pt,text=white,font=\footnotesize]
\renewenvironment{proof}[1][\proofname] {\par\pushQED{\qed}\normalfont\topsep6\p@\@plus6\p@\relax\trivlist\item[\hskip\labelsep\bfseries#1\@addpunct{.}]\ignorespaces}{\popQED\endtrivlist\@endpefalse}
\newtheorem{theorem}{\bf Theorem}[section]
\newtheorem{lemma}[theorem]{\bf Lemma}
\newtheorem{claim}[theorem]{\bf Claim}
\newtheorem{corollary}[theorem]{\bf Corollary}
\newtheorem{proposition}[theorem]{\bf Proposition}
\theoremstyle{definition}
\newtheorem{remark}[theorem]{\bf Remark}
\newtheorem{definition}[theorem]{\bf Definition}
\def\E{\mathbb{E}}
\def\Pb{\mathbb{P}}
\def\cP{\mathcal{P}}
\def\cQ{\mathcal{Q}}
\def\cS{\mathcal{S}}
\def\cT{\mathcal{T}}
\def\ex{\mathrm{ex}}
\DeclareMathOperator{\diam}{diam}
\newcommand{\ab}[1]{\lvert #1 \rvert}
\newlist{lemenum}{enumerate}{1}
\setlist[lemenum]{label=(\roman*), ref=\thelemma(\roman*)}
\title{\vspace{-0.9cm}Canonical Ramsey numbers of sparse graphs}
\author{Lior Gishboliner\thanks{Department of Mathematics, University of Toronto. Email: {\tt lior.gishboliner@utoronto.ca}.} \and Aleksa Milojevi\'c \thanks{Department of Mathematics, ETH Z\"urich, Switzerland. Email: {\tt \{aleksa.milojevic, benjamin.sudakov\}@math.ethz.ch}. Research supported in part by SNSF grant 200021-228014.}\and Benny Sudakov \footnotemark[2]
\and Yuval Wigderson\thanks{Institute for Theoretical Studies, ETH Z\"urich, 8092 Z\"urich, Switzerland. 
   Supported by Dr.\ Max R\"{o}ssler, the Walter Haefner Foundation, and the ETH Z\"{u}rich Foundation. Email: {\tt{yuval.wigderson@eth-its.ethz.ch}}}}
\date{}
\begin{document}

\maketitle
\begin{abstract}
    The canonical Ramsey theorem of Erd\H os and Rado implies that for any graph $H$, any edge-coloring (with an arbitrary number of colors) of a sufficiently large complete graph $K_N$ contains a monochromatic, lexicographic, or rainbow copy of $H$. The least such $N$ is called the Erd\H os--Rado number of $H$, denoted by $ER(H)$.  Erd\H os--Rado numbers of cliques have received considerable attention, and in this paper we extend this line of research by studying Erd\H os--Rado numbers of sparse graphs.
    For example, we prove that if $H$ has bounded degree, then $ER(H)$ is polynomial in $\ab{V(H)}$ if $H$ is bipartite, but exponential in general. 
    
    We also study the closely-related problem of constrained Ramsey numbers. For a given tree $\cS$ and given path $P_t$, we study the minimum $N$ such that every edge-coloring of $K_N$ contains a monochromatic copy of $\cS$ or a rainbow copy of $P_t$. We prove a nearly optimal upper bound for this problem, which differs from the best known lower bound by a function of inverse-Ackermann \nolinebreak type.
\end{abstract}

\section{Introduction}
Ramsey's theorem asserts that if the edges of a sufficiently large complete graph $K_N$ are colored with a fixed palette of colors, then the coloring contains an arbitrarily large monochromatic clique. Moreover, it is easy to see that such a statement cannot be true if we allow the palette of colors to not be fixed; e.g.\ if we allow $\binom N2$ colors to be used then every edge may receive a distinct color, and then we cannot find any monochromatic structure containing more than one edge.

Nonetheless, one {can} prove meaningful statements about edge-colorings of complete graphs with no assumption on the palette of colors. The foundational result of this type is the canonical Ramsey theorem of Erd\H os and Rado \cite{ER50}. In order to state it, we introduce the following terminology. We say that vertices $v_1, \dots, v_n$ form a {\em canonically colored} copy of $K_n$ if one of the following three conditions holds:
\begin{itemize}
    \item all edges $v_iv_j$ have the same color (the clique on $v_1, \dots, v_n$ is \emph{monochromatic}),
    \item for every $i$, all edges $v_iv_j$ for $j>i$ have the same color, say $c_i$, and the colors $c_1, c_2, \dots, c_{n-1}$ are distinct (the clique on $v_1, \dots, v_n$ is \emph{lexicographically colored}), or
    \item all edges $v_iv_j$ have distinct colors (the clique on $v_1, \dots, v_n$ is \emph{rainbow}).
\end{itemize}

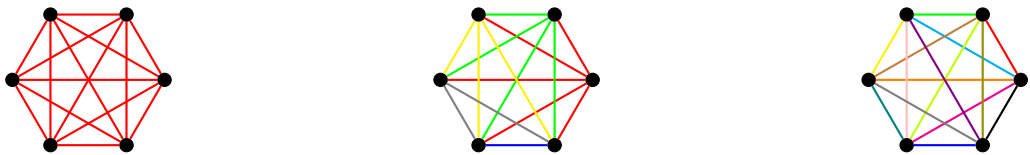
\begin{figure}[ht]
\begin{minipage}{0.33\textwidth}
\centering
	\begin{tikzpicture}
            \node[vertex] (v1) at (0:1) {};
            \node[vertex] (v2) at (60:1) {};
            \node[vertex] (v3) at (120:1) {};
            \node[vertex] (v4) at (180:1) {};
            \node[vertex] (v5) at (-120:1) {};
            \node[vertex] (v6) at (-60:1) {};
            \draw[thick, red] (v1) -- (v2) -- (v3) -- (v4) -- (v5) -- (v6) -- (v1);
            \draw[thick, red] (v1) -- (v3) -- (v5) -- (v1); 
            \draw[thick, red] (v2) -- (v4) -- (v6) -- (v2);
            \draw[thick, red] (v1) -- (v4);
            \draw[thick, red] (v2) -- (v5);
            \draw[thick, red] (v3) -- (v6);
        \end{tikzpicture}
\end{minipage}
\begin{minipage}{0.33\textwidth}
\centering
        \begin{tikzpicture}
            \node[vertex] (v1) at (0:1) {};
            \node[vertex] (v2) at (60:1) {};
            \node[vertex] (v3) at (120:1) {};
            \node[vertex] (v4) at (180:1) {};
            \node[vertex] (v5) at (-120:1) {};
            \node[vertex] (v6) at (-60:1) {};
            \draw[thick, red] (v1) -- (v2);
            \draw[thick, red] (v1) -- (v3);
            \draw[thick, red] (v1) -- (v4);
            \draw[thick, red] (v1) -- (v5);
            \draw[thick, red] (v1) -- (v6);
            \draw[thick, green] (v2) -- (v3);
            \draw[thick, green] (v2) -- (v4);
            \draw[thick, green] (v2) -- (v5);
            \draw[thick, green] (v2) -- (v6);
            \draw[thick, yellow] (v3) -- (v4);
            \draw[thick, yellow] (v3) -- (v5);
            \draw[thick, yellow] (v3) -- (v6);
            \draw[thick, gray] (v4) -- (v5);
            \draw[thick, gray] (v4) -- (v6);
            \draw[thick, blue] (v5) -- (v6);
        \end{tikzpicture}
\end{minipage}
\begin{minipage}{0.33\textwidth}
\centering
        \begin{tikzpicture}
           \node[vertex] (v1) at (0:1) {};
            \node[vertex] (v2) at (60:1) {};
            \node[vertex] (v3) at (120:1) {};
            \node[vertex] (v4) at (180:1) {};
            \node[vertex] (v5) at (-120:1) {};
            \node[vertex] (v6) at (-60:1) {};
            \draw[thick, red] (v1) -- (v2);
            \draw[thick, cyan] (v1) -- (v3);
            \draw[thick, orange] (v1) -- (v4);
            \draw[thick, magenta] (v1) -- (v5);
            \draw[thick, black] (v1) -- (v6);
            \draw[thick, green] (v2) -- (v3);
            \draw[thick, brown] (v2) -- (v4);
            \draw[thick, lime] (v2) -- (v5);
            \draw[thick, olive] (v2) -- (v6);
            \draw[thick, yellow] (v3) -- (v4);
            \draw[thick, pink] (v3) -- (v5);
            \draw[thick, violet] (v3) -- (v6);
            \draw[thick, teal] (v4) -- (v5);
            \draw[thick, gray] (v4) -- (v6);
            \draw[thick, blue] (v5) -- (v6);
        \end{tikzpicture}
\end{minipage}
\caption{Illustrations of canonical colorings of a six-vertex clique}
\end{figure}
The Erd\H os--Rado canonical Ramsey theorem then reads as follows.
\begin{theorem}[Erd\H os--Rado \cite{ER50}]\label{thm:ER}
    For every integer $n$, there exists some $N$ such that every edge-coloring of $E(K_N)$ (with an arbitrary number of colors) contains a canonically colored copy of $K_n$. 
\end{theorem}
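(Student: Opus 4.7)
The plan is to deduce the theorem from Ramsey's theorem for 4-uniform hypergraphs applied to an auxiliary coloring. Given the edge-coloring $\chi$ of $K_N$, for each 4-element subset $\{a_1 < a_2 < a_3 < a_4\} \subseteq [N]$ I assign a \emph{pattern}: the partition of the six induced edges into color classes under $\chi$, equivalently the equivalence relation on these six edges given by color-equality. The number of possible patterns is at most the Bell number $B_6 = 203$, which is bounded independently of the number of colors used by $\chi$. Choosing $N$ large enough in terms of $n$ and applying the 4-uniform hypergraph Ramsey theorem, I obtain a subset $S \subseteq [N]$ of size $n$ on which every 4-subset has the same pattern $P$.

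It then remains to show that any such uniform pattern $P$ forces the coloring on $S$ to be one of the three canonical types. A useful first observation is that for any two edges $e, f$ with both endpoints in $S$, whether $\chi(e) = \chi(f)$ is determined by $P$ applied to any 4-subset of $S$ containing $e \cup f$; hence $P$ determines the entire color-equality relation on $E(K_S)$. The task thus reduces to the purely combinatorial question: which partitions of $E(K_4)$ can arise as the common pattern of every 4-subset of an edge-coloring of $K_n$ for large $n$?

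The main obstacle is carrying out this case analysis cleanly. My approach would be to exploit overlapping 4-subsets: if $P$ groups two edges together in positions $(i_1, j_1)$--$(i_2, j_2)$ (with respect to the induced order on $\{a_1, \dots, a_4\}$) but separates them in positions $(i_1', j_1')$--$(i_2', j_2')$, one can often derive a contradiction by comparing the patterns on two 4-subsets of $S$ that share most of their vertices but realign the positions in question. Pushing this through, one is left with exactly three $P$ satisfying all consistency constraints: the trivial partition $\{\{e_1, \dots, e_6\}\}$ (forcing $S$ to be monochromatic), the discrete partition into singletons (forcing $S$ to be rainbow), and the partition grouping edges by their smaller endpoint, namely $\{\{a_1a_2, a_1a_3, a_1a_4\}, \{a_2a_3, a_2a_4\}, \{a_3a_4\}\}$ (forcing the lexicographic coloring on $S$). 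Verifying that no other partition survives is the technical heart of the argument, but each excluded case is ruled out by a short local comparison of nearby 4-subsets.
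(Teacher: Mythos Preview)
Your approach is precisely the one the paper attributes to Erd\H{o}s and Rado: the paper does not reproduce a proof of \cref{thm:ER} but states that ``the original proof of Erd\H{o}s and Rado \cite{ER50} was via a reduction to a $4$-uniform hypergraph Ramsey problem,'' which is exactly your coloring of $4$-subsets by the color-equality pattern on their six edges followed by an application of $4$-uniform Ramsey. So in approach you are in full agreement with the paper.

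There is one slip in your case analysis. You assert that exactly three patterns $P$ survive the consistency constraints, but in fact four do: in addition to the all-one-block, all-singletons, and group-by-smaller-endpoint patterns you list, the pattern
\[
\bigl\{\{a_1a_2\},\ \{a_1a_3,\,a_2a_3\},\ \{a_1a_4,\,a_2a_4,\,a_3a_4\}\bigr\}
\]
(grouping edges by their \emph{larger} endpoint) is also consistent on every $4$-subset, as witnessed by the coloring $\chi(ij)=\max(i,j)$ on $[N]$. This does not damage the conclusion: the resulting ``max-lexicographic'' coloring on $S$ is lexicographic in the paper's sense once you reverse the ordering of $S$ (take $v_1$ to be the largest element of $S$, etc.). But your claim that only three patterns survive is literally false, and the missing case should be included in the analysis.
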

Let us denote by $ER(K_n)$ the smallest integer $N$ for which every edge-coloring of $K_N$ contains a canonically colored copy of $K_n$. The original proof of Erd\H os and Rado \cite{ER50} was via a reduction to a $4$-uniform hypergraph Ramsey problem, and thus gave weak bounds of the form $ER(K_n) \leq 2^{2^{2^{O(n)}}}$. However, an alternative proof of \cref{thm:ER} was found by Lefmann and R\"odl \cite{LR95}, which in particular supplies the much stronger bound $ER(K_n)\leq n^{O(n^2)}$. In the other direction, it is easy to show that $ER(K_n)$ is at least the $(n-2)$-color Ramsey number of $K_n$, yielding a lower bound of $ER(K_n) \geq 2^{\Omega(n^2)}$ via a well-known lower bound of Abbott \cite{MR0314673} (rediscovered several times, e.g.\ \cite{MR0932230,MR0316290}) on multicolor Ramsey numbers. We thus know the behavior of $ER(K_n)$ up to a logarithmic gap in the exponent, and closing this gap is a major challenge.

In this paper, we are interested in studying the Erd\H os--Rado numbers of sparser graphs than cliques. Formally, let us define the Erd\H os--Rado number of a graph $H$ to be the smallest $N$ such that any edge-coloring of $K_N$ contains a {\em canonically colored} copy of $H$. Here, we say that a copy of $H$ is canonically colored if it is monochromatic, rainbow, or there is an ordering of its vertices $v_1, v_2, \dots$ such that for every $i$, the edges $v_iv_j\in E(H)$ where $j>i$ all have the same color, and these colors are distinct for different vertices $v_i$. Note that this quantity is well-defined, for if $H$ has $n$ vertices, then $ER(H) \leq ER(K_n) \leq n^{O(n^2)}$.

One of the most important discoveries in graph Ramsey theory is that the Ramsey numbers\footnote{Given a graph $H$, its \emph{Ramsey number} is the least $N$ such that every two-coloring of $E(K_N)$ contains a monochromatic copy of $H$.} of sparse graphs are much smaller than the corresponding Ramsey numbers of complete graphs. Thus, for example, a foundational result of Chvat\'al, R\"odl, Szemer\'edi, and Trotter \cite{MR714447} states that bounded-degree graphs have Ramsey numbers which are linear in their number of vertices (in contrast to cliques, whose Ramsey number is exponential). This result was extended to graphs of bounded degeneracy by Lee \cite{MR3664811}, confirming a famous conjecture of Burr and Erd\H os \cite{MR0371701}.

Recently, a number of authors have studied the extent to which analogous statements are true in other Ramsey-theoretic settings, such as for hypergraphs \cite{MR2532871,MR2520273,MR2410941, FSSTZ23}, vertex- and edge-ordered graphs \cite{MR4170446,MR3575208,MR4057168}, and directed graphs \cite{FHW,MR2793448}. In the canonical Ramsey setting, it is natural to wonder whether, for a ``sparse'' $n$-vertex graph $H$, its Erd\H os--Rado number $ER(H)$ is much smaller than $n^{O(n^2)}$, the generic bound which holds for all $n$-vertex graphs. In particular, if $H$ is sparse, is $ER(H)$ polynomial in $n$?

Our first result essentially resolves this question, obtaining nearly matching upper and lower bounds for $ER(H)$ for a wide class of sparse $H$. Interestingly, our results reveal that $ER(H)$ is polynomial in $n$ if and only if $H$ is bipartite, a curious condition that has no obvious analogue in the world of classical Ramsey numbers. For simplicity, we state the following theorem only for regular graphs, although it holds in much greater generality, as we discuss in \cref{sec:bounded deg}.
\begin{theorem}\label{thm:ER bounds}
    There exist absolute constants $C>c>0$ such that the following holds. Let $H$ be an $n$-vertex $d$-regular graph.
    \begin{enumerate}
        \item If $\chi(H)=2$, then
        \[
        n^{cd} \leq ER(H) \leq n^{Cd}.
        \]
        \item If $\chi\coloneqq \chi(H)\geq 3$, then
        \[
        2^{cn} \leq ER(H) \leq n^{Cd\chi n}.
        \]
    \end{enumerate}
\end{theorem}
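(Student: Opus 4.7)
My plan is to handle the four bounds separately, with the bipartite upper bound being the main technical challenge.

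For the \textbf{bipartite upper bound} $ER(H) \leq n^{Cd}$: given an edge-coloring of $K_N$ with $N = n^{Cd}$, I aim to construct a lex copy of $H$. Order $V(H) = A \cup B$ with $A$ first, so that each vertex of $A$ has at most $d$ later neighbors (all in $B$). A lex copy of $H$ then corresponds to a choice of centers $u_1, \ldots, u_{|A|}$ in $K_N$ with pairwise distinct row colors $c_1, \ldots, c_{|A|}$ together with $|B|$ vertices $\{w_j\}_{b_j \in B}$, such that every edge $a_ib_j \in E(H)$ becomes a $c_i$-colored edge $u_iw_j$. I would build the centers one at a time, maintaining a large pool $P$ of candidate $B$-vertices, via the following dichotomy: either some color class inside $P$ contains a monochromatic copy of $H$ (detected using the Kővári--Sós--Turán bound $\ex(N, H) = O(N^{2-1/d})$ for bipartite $H$ of max degree $d$), or a pigeonhole / dependent-random-choice argument yields a new center $u_i$ and a fresh color $c_i$ whose color-$c_i$ neighborhood inside $P$ retains size at least $|P|^{1-1/d}$, after which we restrict $P$ and continue. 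The main obstacle is controlling the shrinkage of $|P|$ across the $|A| = \Theta(n)$ iterations; overcoming it likely requires a preliminary Ramsey-type step to pass to a ``partite''-structured subset in which each further iteration loses only an $O(1/n)$ factor in the exponent of $|P|$, so that $|P| \geq n$ survives to the end and any $|B|$ of its vertices complete the embedding.

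For the \textbf{non-bipartite upper bound} $ER(H) \leq n^{Cd\chi n}$: I would apply a similar iterative scheme, but now $\ex(N, H) = \Omega(N^2)$ by Erdős--Stone--Simonovits, so there is no $1/d$ gain per step; each iteration loses a constant factor in the exponent of $|P|$, and carrying out the embedding of all $n$ vertices of $H$, with extra factors reflecting $d$ and the extremal density $1 - 1/(\chi-1)$, requires $\Theta(\chi d n)$ iterations, yielding the stated bound.

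For the \textbf{lower bounds}: in the bipartite case a random $k$-coloring of $K_N$ with $k \approx |E(H)| - 1 = dn/2 - 1$ colors works. A first-moment computation bounds the expected number of monochromatic copies of $H$ by $\approx N^n k^{1 - |E(H)|}$ and the expected number of lex copies by $\approx N^n k^{-|A|(d-1)}$, both of which are $o(1)$ when $N = n^{cd}$ with $c > 0$ sufficiently small, while rainbow copies are ruled out directly by the choice $k < |E(H)|$. In the non-bipartite case a random coloring can still produce lex copies of $H$, so I would use an explicit dyadic construction on $V(K_N) = \{0,1\}^m$ with $N = 2^m$ and $m = \Theta(n)$: color $\{u,v\}$ by the coordinate at which $u$ and $v$ first differ. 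Each monochromatic class is then a disjoint union of bicliques (hence bipartite), ruling out monochromatic copies of any non-bipartite $H$; with only $\Theta(n) < |E(H)|$ colors, rainbow $H$ is impossible; and a direct structural check, exploiting the rigidity of the bicliques, rules out lex copies.
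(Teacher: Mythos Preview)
Your plan for the bipartite upper bound has a genuine gap. Building the lex copy one center at a time and shrinking the candidate pool $P$ at each step is far too expensive: even if each step retained $|P|^{1-1/d}$ vertices, after $|A|=\Theta(n)$ iterations the surviving pool has size $N^{(1-1/d)^{\Theta(n)}}$, which forces $N$ to be doubly exponential in $n/d$, not $n^{O(d)}$. Your proposed fix (``a preliminary Ramsey-type step to pass to a partite-structured subset in which each further iteration loses only an $O(1/n)$ factor in the exponent'') is not a concrete mechanism, and I do not see any natural preprocessing that yields such slow shrinkage for $\Theta(n)$ rounds. The underlying issue is that your scheme is implicitly trying to embed $K_{|A|,|B|}$ (it forces all $B$-vertices into the intersection of \emph{all} $|A|$ colored neighborhoods), rather than exploiting the sparsity of $H$.

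The paper avoids iteration entirely. It first runs a one-shot dichotomy: either the coloring is locally spread out enough that a random transversal of a balanced partition gives a rainbow copy of $H$, or some vertex sends $\gtrsim |P|/n^4$ edges of a single color into some part. In the latter case one collects a set $Y$ of $n^{\Theta(d)}$ such vertices, each with a designated ``majority color'' $c(v)$; a square-root pigeonhole then gives $|Y'|\ge n^{\Theta(d)}$ on which the $c(v)$ are either all equal or all distinct. The bipartite graph between $Y'$ and the target part, keeping only the $c(v)$-colored edges, has density $\Omega(n^{-4})$, and a \emph{single} application of the degenerate Tur\'an bound $\ex(m,H)\le n^{1/(2d)} m^{2-1/(4d)}$ finds a copy of $H$ inside it; this copy is monochromatic if all $c(v)$ agree and lexicographic if they are distinct. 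The point is that the whole embedding of $H$ is produced in one Tur\'an step, not vertex by vertex.

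For the non-bipartite lower bound you have the right construction (the hypercube coloring on $\{0,1\}^m$), but your justification for ruling out lex copies is incorrect. ``Rigidity of the bicliques'' does not preclude lex copies: e.g.\ a lex $K_3$ already appears once $m\ge 2$. The correct argument is a color count. In any lex copy of $H$ the vertices with positive forward degree form a vertex cover and receive pairwise distinct row colors, so every lex copy uses at least $\tau(H)\ge e(H)/\Delta(H)=n/2$ colors. Taking $m<n/2$ therefore rules out lex copies, while bipartiteness of each color class rules out monochromatic copies and $m<e(H)$ rules out rainbow copies.

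Your two remaining parts are essentially fine. For the bipartite lower bound, remember to union-bound over all $n!$ orderings of $V(H)$ (not just the $A$-first one); the first-moment calculation still gives $n^{\Omega(d)}$. For the non-bipartite upper bound, the paper's iterative scheme is close in spirit to what you describe, with dependent random choice supplying each step.
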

In other words, a $d$-regular bipartite graph has Erd\H os--Rado number equal to $n^{\Theta(d)}$, and if $d,\chi$ are fixed, then a $d$-regular non-bipartite graph has Erd\H os--Rado number equal to $2^{\Theta(n)}$. The actual results we prove are somewhat more general than those stated in \cref{thm:ER bounds}; for example, we show that the upper bound of $n^{Cd}$ holds even if the bipartite graph $H$ is only assumed to be $d$-degenerate\footnote{A graph is said to be $d$-degenerate if its vertices can be ordered as $v_1,\dots,v_n$ such that each vertex is adjacent to at most $d$ vertices which precede it, i.e.\ each $v_i$ is adjacent to at most $d$ vertices $v_j$ with $j<i$.}, rather than $d$-regular. We defer the precise statements to \cref{sec:bounded deg}.

Although \cref{thm:ER bounds} describes the approximate growth rate of $ER(H)$ for sparse graphs, it would still be desirable to obtain more precise information for certain restricted classes. In particular, one case that has been well-studied is that of trees. Recall that every tree is $1$-degenerate, which implies that every tree has a vertex order according to which the lexicographic coloring is rainbow. As such, \cref{thm:ER} implies that for every tree $\cT$, there exists some $N$ such that every coloring of $E(K_N)$ contains a monochromatic or rainbow copy of $\cT$. More generally, given two trees $\cS,\cT$, one can define $f(\cS,\cT)$ to be the least $N$ such that every coloring of $E(K_N)$ contains a monochromatic copy of $\cS$ or a rainbow copy of $\cT$.

The study of the function $f(\cS,\cT)$ was initiated by Jamison, Jiang, and Ling \cite{MR1943103}, who termed this the \emph{constrained Ramsey number} of $\cS$ and $\cT$. They proved a number of results on $f(\cS,\cT)$, including the bounds
\[\Omega(st) \leq f(\cS,\cT) \leq O(st\diam(\cT))\]
whenever $\cS$ has $s$ vertices, $\cT$ has $t$ vertices, and $\diam(\cT)$ denotes the diameter of $\cT$. In particular, these results show that $f(\cS,\cT) = \Theta(st)$ whenever $\cT$ has bounded diameter (e.g.\ is a star), but the upper and lower bounds are off by a factor of $\Theta(t)$ in the worst case. Jamison, Jiang, and Ling \cite{MR1943103} asked to narrow the gap, and tentatively conjectured that the lower bound is closer to the truth. In particular, they asked to determine the correct dependence on $t$ in general, and asked about the case where $\cT = P_t$ is a path on $t$ vertices, since this is the case where their bounds are furthest apart.

For fixed $s$, the first question was resolved by Wagner \cite{MR2238050}, who proved that $f(\cS, P_t) = O(s^2 t)$ for all $\cS$. In particular, this shows that the correct dependence on $t$ is linear when $s$ is held constant, but does not improve on the bound of Jamison--Jiang--Ling \cite{MR1943103} when $s$ and $t$ are of the same order. On the other hand, major progress on the second question was made by Loh and Sudakov \cite{LS09}, who proved that $f(\cS, P_t) = O(st \log t)$, which matches the lower bound up to a logarithmic factor and which is much stronger than the upper bounds of \cite{MR1943103,MR2238050} when $s$ and $t$ tend to infinity at comparable rates. Our second main result is a further improvement over the result of Loh--Sudakov \cite{LS09}: we prove an upper bound on $f(\cS, P_t)/(st)$ that is of inverse Ackermann type. In order to state this result precisely, let us recall the definition of the inverse Ackermann hierarchy.
\begin{definition}
The function $\alpha_1$ is given by $\alpha_1(t)=\lceil t/2\rceil$. For each $k\geq 2$, we define $\alpha_k(t)$ inductively, to be the smallest number of times the function $\alpha_{k-1}$ must be applied to $t$ so that the output becomes $1$. More formally, we have $\alpha_k(1)=0$ and $\alpha_k(t)=\min\{r: \alpha_{k-1}^{(r)}(t)=1\}$, where $\alpha_{k-1}^{(r)}$ denotes the $r$-fold application of the function $\alpha_{k-1}$.
\end{definition}

For example, we have $\alpha_2(n)=\lceil \log_2 n\rceil$ and $\alpha_3(n)=\log_\star n$. 
With this notation the result of Loh--Sudakov can be stated as saying that $f(\cS, P_t) = O(st \alpha_2(t))$, and our next main result obtains such a bound at every level of the inverse Ackermann hierarchy.

\begin{theorem}\label{thm:main}
For every integer $k$, there exists a constant $A_k$ such that the following holds. For every $s$-vertex tree $\cS$ and every integer $t\geq 2$, we have $f(\cS, P_t)\leq A_k s t \alpha_k(t)$.
\end{theorem}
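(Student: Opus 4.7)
I will proceed by induction on $k$. The base case $k=2$ is the Loh--Sudakov bound $f(\cS,P_t)=O(st\log t)=O(st\,\alpha_2(t))$, so $A_2$ may be taken to be the constant implicit there. For the inductive step, assume the theorem for $k-1$ with constant $A_{k-1}$, and consider an edge-coloring of $K_N$ with $N=A_k s t\,\alpha_k(t)$ containing no monochromatic copy of $\cS$; the aim is to produce a rainbow $P_t$.

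The plan is to assemble a rainbow $P_t$ by concatenating $r=\alpha_k(t)$ successively shorter rainbow segments, each provided by the inductive hypothesis applied with a reduced length parameter. Concretely, I would build a nested chain $V_0\supseteq V_1\supseteq\cdots\supseteq V_r$ of vertex subsets and a growing rainbow path $Q_0\subsetneq Q_1\subsetneq\cdots\subsetneq Q_r$ with the invariant that at stage $i$ one applies the inductive hypothesis to the coloring of $K[V_{i-1}]$ with length parameter $\ell_i$, obtaining either a monochromatic $\cS$ (and we are done) or a rainbow $P_{\ell_i}$ which, after a suitable preparation, is grafted onto $Q_{i-1}$ to form $Q_i$. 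Choosing $\ell_i=\alpha_{k-1}^{(i-1)}(t)$ so that iterating $\alpha_{k-1}$ shrinks $t$ to $1$ in $r=\alpha_k(t)$ steps, the total length $\sum_i \ell_i$ exceeds $t$, giving the desired rainbow $P_t$.

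Two manoeuvres are needed when passing from stage $i-1$ to stage $i$. First, to force the new segment to use colors disjoint from those already on $Q_{i-1}$, I would delete every edge whose color appears on $Q_{i-1}$ and pass to a large induced sub-clique $V_i\subseteq V_{i-1}$ avoiding those edges; because the absence of a monochromatic $\cS$ forces every color class to have at most $\ex(|V_{i-1}|,\cS)\le (s-2)|V_{i-1}|/2$ edges, the total discarded mass is a small fraction of $\binom{|V_{i-1}|}{2}$, and such a $V_i$ exists by a standard averaging argument. Second, to link the new segment to the endpoint $u$ of $Q_{i-1}$, I would pigeonhole on the edges from $u$ into $V_i$ to locate a large ``entry subset'' on which all edges from $u$ share a single new color, and then invoke the inductive hypothesis inside that subset.

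The main obstacle I foresee is balancing the multiplicative losses across the $r=\alpha_k(t)$ stages: the requirement at stage $i$ is that $|V_i|\ge A_{k-1}\cdot s\cdot \ell_i\cdot\alpha_{k-1}(\ell_i)$, and a naive accounting yields a geometric loss of order $O(s)^{r}$ which is too large to fit inside $N=A_k st\,\alpha_k(t)$. Making the losses compound only into the multiplicative prefactor $A_k$, rather than into the $\alpha_k(t)$ factor, will likely require either a non-uniform choice of segment lengths (with the dominant $\ell_1$ chosen much larger than the subsequent ones) or a cleverer linking step --- for instance, a one-shot probabilistic sampling that produces all of the sets $V_i$ simultaneously with high probability. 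This bookkeeping, and the corresponding careful calibration of the $\ell_i$ against the inverse Ackermann hierarchy, is what I expect to be the technical heart of the argument.
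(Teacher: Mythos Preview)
Your outline has a genuine gap, and it is precisely the obstacle you flag at the end: the losses incurred at each of the $\alpha_k(t)$ stages compound multiplicatively, and no ``careful calibration'' of the $\ell_i$ can rescue the black-box recursion you describe. Concretely, the step ``pass to a large induced sub-clique $V_i\subseteq V_{i-1}$ avoiding the old colors'' is already fatal. By stage $i$ you have forbidden up to $\Theta(t)$ colors, each contributing $O(s|V_{i-1}|)$ edges, so the bad graph has average degree $\Theta(st)$; the largest induced sub-clique missing all such edges is an independent set in this bad graph and hence has order at most $|V_{i-1}|/\Theta(st)$. Iterating this $\alpha_k(t)$ times produces a factor $(st)^{\alpha_k(t)}$, hopelessly far from the target $A_k st\,\alpha_k(t)$. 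There is also a more immediate problem with your parameters: with $\ell_1=\alpha_{k-1}^{(0)}(t)=t$, the \emph{first} invocation of the inductive hypothesis already asks for $|V_0|\ge A_{k-1}\,s\,t\,\alpha_{k-1}(t)$, which is the level-$(k-1)$ bound and exceeds $N=A_kst\,\alpha_k(t)$; the recursion never gets off the ground.

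The paper avoids these losses by not recursing on the theorem itself. It applies the Loh--Sudakov structural lemma \emph{once}, obtaining a set $U$ with a partition $U=\bigcup_i U_i$, associated colors $c_i$, a small set of rogue colors, and --- crucially --- a supply of external vertices that splice any two paths ending in $U$ together \emph{for free} (no pigeonhole step, no sub-clique loss). The argument then works with \emph{rainbow collections} of directed paths in a fixed median ordering of $U$: by the splicing lemma, a rainbow collection of total length $t$ yields a rainbow $P_t$. The induction on $k$ is on a technical extension statement (\cref{prop:main induction}) asserting that, under quantitative control of a bad set and of rogue degrees on an interval $I$, a rainbow collection ending before $I$ can be lengthened by $\Omega_k(|I|/s)$. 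The hypothesis \eqref{eq:main proposition assumption} tracks these errors \emph{additively}, not multiplicatively, and the heart of the proof is an amortization: when the level-$(k-1)$ hypothesis fails on many subintervals of $I$, it is because the bad sets there are large, which in turn forces the rainbow collections already built on later subintervals to be long enough to compensate. This amortized additive bookkeeping, together with the free gluing, is exactly what replaces the geometric loss in your scheme, and it is invisible from the theorem statement alone.
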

This result suggests that the true value of $f(S,P_t)$ is $\Theta(st)$, and more generally supports the conjecture of Jamison--Jiang--Ling that the same bound holds for all trees $S,T$. Moreover, this result yields a nearly quadratic upper bound on the Erd\H os--Rado numbers of paths.
\begin{corollary}
    For all integers $k,t$, we have $ER(P_t) = O_k(t^2 \alpha_k(t))$.
\end{corollary}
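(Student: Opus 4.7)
The plan is to derive this corollary immediately from \cref{thm:main} via a simple monotonicity observation, with essentially no further work.

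First I would unwind the definitions. The quantity $ER(P_t)$ is the least $N$ such that every edge-coloring of $K_N$ contains a canonically colored copy of $P_t$, where \emph{canonical} means monochromatic, rainbow, or lexicographic. Since monochromatic and rainbow copies are, by definition, canonical, any coloring of $K_N$ that is forced to contain a monochromatic or rainbow copy of $P_t$ is automatically forced to contain a canonically colored one. This yields the bound $ER(P_t) \leq f(P_t, P_t)$, purely from the definitions of the two quantities.

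Next, I would apply \cref{thm:main} with $\cS = P_t$, so that the parameter $s$ equals $t$. This gives $f(P_t, P_t) \leq A_k \cdot t \cdot t \cdot \alpha_k(t) = A_k\, t^2\, \alpha_k(t)$, establishing the desired bound $ER(P_t) = O_k(t^2 \alpha_k(t))$.

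There is no real obstacle here; the entire substantive content sits inside \cref{thm:main}. It is perhaps worth remarking that the lexicographic option in the definition of $ER(\cdot)$ plays no active role in this argument: a lexicographic copy of $P_t$ need not be rainbow (for instance, placing an internal path vertex first in the canonical ordering produces two edges sharing the same color), but the weaker monochromatic-or-rainbow dichotomy supplied by \cref{thm:main} is already strong enough to control $ER(P_t)$.
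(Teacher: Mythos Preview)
Your proposal is correct and matches the paper's intended argument: the corollary is stated without proof immediately after \cref{thm:main}, with the remark that ``this result yields a nearly quadratic upper bound on the Erd\H os--Rado numbers of paths,'' and the derivation is exactly the one you give, namely $ER(P_t)\leq f(P_t,P_t)\leq A_k t^2\alpha_k(t)$.
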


The rest of this paper is organized as follows. We present our bounds on $ER(H)$ for sparse $H$ in \cref{sec:bounded deg}, and the proof of \cref{thm:main} in \cref{sec:ackermann}. The proofs in \cref{sec:bounded deg} are all fairly short and use well-known techniques in Ramsey theory, such as product colorings and the dependent random choice method. In contrast, the proof of \cref{thm:main} is rather involved, and is based on a complicated induction scheme based on an amortized version of the technique introduced by Loh and Sudakov \cite{LS09}.

\section{Erd\H{o}s--Rado numbers of bounded degree graphs}\label{sec:bounded deg}

We begin by stating the more precise versions of \cref{thm:ER bounds} that we will prove. We begin with the upper and lower bounds for bipartite graphs.

\begin{theorem}\label{thm:upper bound bipartite}
Let $H$ be a $t$-degenerate bipartite graph on $n$ vertices. Then $ER(H)\leq n^{O(t)}$.
\end{theorem}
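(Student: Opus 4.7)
The plan is to combine the dependent random choice (DRC) method with a case analysis based on the color distribution of $\chi$. Let $N=n^{Ct}$ for a sufficiently large constant $C$, and fix an arbitrary edge-coloring $\chi$ of $K_N$. Fix a degeneracy ordering $v_1,\dots,v_n$ of $H$, so each $v_i$ has at most $t$ back-neighbors; the proof will embed $H$ into $K_N$ canonically by processing this ordering greedily.

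The main dichotomy is based on the maximum color class size. Either some color $c^{\star}$ has a color class $G_{c^{\star}}$ dense enough to contain $H$ (monochromatic case), or every color class is sparse, forcing the coloring to use many distinct colors in total. In the monochromatic case, the standard DRC argument for embedding bipartite $t$-degenerate graphs in dense host graphs immediately produces a monochromatic $H$ inside $G_{c^{\star}}$: pick a small random reservoir, pass to a subset of its common neighborhood in $G_{c^{\star}}$ in which every $t$-subset has large common neighborhood, and then embed $v_1,\dots,v_n$ greedily in the degeneracy order. A density of $n^{-O(1)}$ suffices for this argument given our choice of $N=n^{Ct}$.

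In the non-monochromatic case, I would further sub-dichotomize: either many vertices of $K_N$ have a ``dominant'' color (one color on a constant fraction---after normalization, on $\geq N/n^{O(1)}$---of their edges), or most vertices see a diverse color palette. In the first sub-case, we greedily build a lexicographic $H$ in the degeneracy order, assigning each embedded $\phi(v_i)$ its dominant color as $c_i$. Distinctness of the $c_i$'s is maintained using the abundance of colors (otherwise many vertices would share a dominant color, contradicting the fact that every color class is sparse), and a DRC pre-processing step ensures that the common color-neighborhoods of $\leq t$-subsets of the reservoir are uniformly large, so the candidate set $T$ for future images shrinks by only a polynomial factor per step. In the second sub-case, the large palette at each vertex allows a greedy rainbow embedding of $H$: at each step we pick $\phi(v_i)$ whose $\leq t$ back-edges receive fresh colors, which exist because each vertex sees many unused colors on its edges.

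The main obstacle is the lexicographic embedding, where we must simultaneously maintain three invariants at each of the $n$ steps: (i) the candidate set shrinks by at most a polynomial factor, (ii) the colors $c_i$ remain distinct, and (iii) the back-neighbor color constraints are satisfiable. The parameter $t$ enters the exponent $n^{Ct}$ precisely because each embedding step can impose up to $t$ color constraints on the candidate set; the $t$-degeneracy of $H$ is what keeps the cumulative shrinkage to $n^{O(t)}$ over all $n$ steps, whereas without the degeneracy bound the shrinkage would force $N$ to be super-polynomial in $n$.
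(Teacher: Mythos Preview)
Your proposal has a genuine gap in the lexicographic sub-case, and the issue is structural rather than cosmetic.

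The greedy lexicographic embedding you describe requires, at each step $i$, that $\phi(v_i)$ lie in the intersection of the $c_{j_1}$-neighborhood of $\phi(v_{j_1})$, the $c_{j_2}$-neighborhood of $\phi(v_{j_2})$, \dots, for the (up to $t$) back-neighbors $v_{j_1},\dots,v_{j_k}$ of $v_i$. These colors $c_{j_1},\dots,c_{j_k}$ are \emph{all distinct} by the lexicographic requirement, so you are intersecting neighborhoods in $t$ different color classes. Standard DRC pre-processing gives you large common neighborhoods inside a \emph{single} fixed graph; it says nothing about intersections of neighborhoods taken in different graphs. There is no off-the-shelf ``multicolor DRC'' that handles this, and without it your candidate set could collapse at any step. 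Relatedly, your shrinkage accounting is off: even if each of the $n$ embedding steps cost only a polynomial factor, the cumulative shrinkage would be $n^{-\Omega(n)}$, not $n^{-O(t)}$; the $t$-degeneracy bounds the number of constraints \emph{per step}, not the number of steps. Finally, note that your lexicographic argument never uses that $H$ is bipartite, which should be a warning sign: the theorem fails (exponentially) for non-bipartite $H$, so any correct proof must exploit bipartiteness in exactly this case.

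The paper's proof avoids all of this by a different route. It partitions $V(K_N)$ into $n$ blocks, and either every block is ``locally rainbow'' toward the relevant neighboring blocks (giving a rainbow $H$ by a random transversal), or some block $U_i$ contains many vertices each sending $\geq s/n^4$ edges of a single color into some $U_j$. A pigeonhole on these popular colors yields a large set $Y\subseteq U_i$ on which the popular colors are either all equal or all distinct. The bipartite graph between $Y$ and $U_j$ formed by these popular-color edges then has density $\Omega(n^{-4})$, and the Alon--Krivelevich--Sudakov extremal bound for $t$-degenerate bipartite graphs produces a copy of $H$ inside it. Because $H$ is bipartite, one can place one side of $H$ in $Y$ and the other in $U_j$; ordering the $Y$-vertices first makes this copy monochromatic (if the popular colors coincide) or lexicographic (if they are distinct). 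Bipartiteness is used exactly here: the lexicographic structure comes for free from the two-sided embedding, with no need to control intersecting color-neighborhoods.
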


\begin{theorem}\label{thm:lower bound bipartite}
Let $H$ be an $n$-vertex graph with average degree $d$. Then $ER(H) \geq n^{\Omega(d)}$.
\end{theorem}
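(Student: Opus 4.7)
The plan is to give a standard random-coloring argument. Set $N = \lfloor n^{cd} \rfloor$ for a small absolute constant $c>0$ and take a palette of $q = n$ colors; color $E(K_N)$ uniformly and independently at random. I would focus on the range $d \geq 5$, since for $d \leq 4$ the conclusion follows from the trivial bound $ER(H) \geq n$ once $c$ is chosen small enough (say $c \leq 1/4$). The idea is then to show, by linearity of expectation, that each of the three canonical structures appears $o(1)$ times in expectation, so a valid coloring exists by a union bound.

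The expected counts are estimated as follows. A \emph{rainbow} copy of $H$ requires $e(H) = nd/2$ distinct colors, which exceeds $q = n$ whenever $d \geq 3$, so no rainbow copy exists at all. A fixed copy of $H$ is \emph{monochromatic} with probability $q^{1 - e(H)}$, so
\[
\E[\#\text{mono}] \leq N^n \cdot q^{1 - e(H)} \leq n^{n(cd - d/2) + 1},
\]
which is $o(1)$ for any $c < 1/2$. For the \emph{lex} case, consider any ordered tuple $(v_1, \dots, v_n)$ of distinct vertices in $V(K_N)$ together with a bijection between this tuple and $V(H)$; the probability that the induced copy of $H$ is lex-colored with respect to the given ordering is at most $q^{k - e(H)} \leq q^{n - 1 - e(H)}$, where $k \leq n-1$ is the number of non-sink vertices (each non-sink forces a distinct ``out-color'' $c_i$, which together pin down all $e(H)$ edges). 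Summing over at most $N^n$ ordered tuples and $n!$ bijections,
\[
\E[\#\text{lex}] \leq n! \cdot N^n \cdot q^{n - 1 - e(H)} \leq n^{(cd + 2)n - nd/2 - 1},
\]
which is $o(1)$ provided $c < 1/2 - 2/d$. For $d \geq 5$, the choice $c = 1/20$ suffices.

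A union bound then produces a coloring of $K_N$ with no canonical copy of $H$, yielding $ER(H) > N = n^{\Omega(d)}$. The main (modest) obstacle is the lex estimate, because summing over the $n!$ orderings of $V(H)$ contributes an extra $n^n$ factor which must be absorbed by $q^{e(H) - n + 1}$; this is precisely what forces the constant $c$ to be small and requires $d$ to be bounded away from $2$. For small $d$ the random estimate degrades, but in that regime the trivial lower bound $ER(H) \geq n$ already gives what is needed.
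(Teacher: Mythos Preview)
Your proof is correct and follows essentially the same approach as the paper's: a uniformly random $n$-coloring of $E(K_N)$ combined with a union bound over all ordered embeddings of $H$. The paper merges the monochromatic and lexicographic cases into a single estimate (bounding the probability of a ``weakly lexicographic'' copy by $\prod_i n^{1-d^+(v_i)} = n^{n-e(H)}$) and thereby obtains the slightly sharper exponent $N = n^{(d-4)/2}$, but the argument is otherwise identical.
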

Note that \cref{thm:lower bound bipartite} holds even for non-bipartite $H$. However, the following result yields a much stronger lower bound if $\chi(H)>2$ and the average degree of $H$ is proportional to its maximum degree.

\begin{theorem}\label{thm:lower bound non-bipartite}
    Let $H$ be an $n$-vertex graph with maximum degree $\Delta$ and average degree $d$, and suppose that $\chi(H) \geq 3$. Then $ER(H) > 2^{\lceil nd/(2\Delta)\rceil -1}$.
\end{theorem}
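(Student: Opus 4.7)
The plan is to construct an explicit $k$-coloring of $K_{2^k}$ with $k = \lceil nd/(2\Delta)\rceil - 1$ that simultaneously avoids monochromatic, lexicographically colored, and rainbow copies of $H$. I would take $V(K_{2^k}) = \{0,1\}^k$, and color each edge $\{u,v\}$ by the smallest coordinate $i$ at which $u_i \neq v_i$. This gives a $k$-coloring, and $ER(H) > 2^k$ will follow once the three canonical structures are ruled out.

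The easy two cases come first. Each color class is bipartite: color class $i$ is a disjoint union of complete bipartite graphs, one for each prefix in $\{0,1\}^{i-1}$, with bipartition determined by the $i$-th coordinate. Since $\chi(H) \geq 3$, $H$ is not a subgraph of any color class, ruling out monochromatic copies. A rainbow copy would require $e(H) = nd/2$ distinct colors, but by construction $k < nd/(2\Delta) \leq nd/2 = e(H)$ (using $\Delta \geq 1$), so rainbow copies are impossible too.

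The main step, which I expect to be the principal technical obstacle, is ruling out lexicographic copies. In a lex copy given by an ordering $v_1 < \cdots < v_n$, a ``forward color'' $c_i$ is defined for every vertex $v_i$ that has at least one later neighbor in $H$, and these $c_i$ must be pairwise distinct. The vertices with no later neighbor (the sinks) form an independent set in $H$, so the number of non-sinks, and hence the number of distinct colors that must appear, is at least $n - \alpha(H)$. I would then invoke Gallai's identity $\alpha(H) + \tau(H) = n$ together with the elementary inequality $\tau(H) \geq e(H)/\Delta(H) = nd/(2\Delta)$ (since each vertex covers at most $\Delta$ edges), to conclude that $n - \alpha(H) = \tau(H) \geq \lceil nd/(2\Delta)\rceil = k+1$. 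Thus any lex copy would need at least $k+1$ distinct colors, contradicting the fact that our palette contains only $k$. Combining the three items gives $ER(H) > 2^k = 2^{\lceil nd/(2\Delta)\rceil -1}$, as claimed.
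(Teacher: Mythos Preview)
Your proof is correct and essentially identical to the paper's: both use the hypercube coloring on $\{0,1\}^k$ with $k=\lceil nd/(2\Delta)\rceil-1$, rule out monochromatic and rainbow copies the same way, and for the lexicographic case argue that the vertices with a forward neighbor form a vertex cover of size at least $\tau(H)\geq e(H)/\Delta=nd/(2\Delta)>k$. The only cosmetic difference is that the paper phrases this directly as ``$S$ is a vertex cover'' (their Lemma~2.5), whereas you take the complementary route via ``sinks form an independent set'' plus Gallai's identity; these are equivalent formulations of the same observation.
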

In particular, if $d = \Theta(\Delta)$ and $\chi(H) \geq 3$, then $ER(H) \geq 2^{\Omega(n)}$. We remark that the assumption that the maximum degree is not much larger than the average degree is necessary in order to get an exponential lower bound. Indeed, it is not hard to show that if $H_n$ is the $n$-vertex graph obtained from $K_{1,n-1}$ by adding an edge between two leaves, then $\chi(H_n)=3$ but $ER(K_n) \leq (n-1)^2$.

In the other direction, we prove the following exponential upper bound on $ER(H)$ for non-bipartite $H$.

\begin{theorem}\label{thm:upper bound non-bipartite}
Let $H$ be a $n$-vertex graph of maximum degree $\Delta \geq 2$ and chromatic number $\chi=\chi(H)\geq 3$. Then $ER(H)\leq n^{O(\Delta\chi n)}$.
\end{theorem}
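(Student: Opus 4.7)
Our plan is to adapt the iterative refinement procedure from the Lefmann--R\"odl proof of the canonical Ramsey theorem, tailored to exploit the bounded maximum degree and chromatic number of $H$. Fix $N = n^{C \Delta \chi n}$ for a sufficiently large absolute constant $C$, and consider an arbitrary edge-coloring $\phi$ of $K_N$. The goal is to locate a monochromatic, lexicographic, or rainbow copy of $H$.

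First, I would build a canonical skeleton by refinement. Set $V_0 = V(K_N)$ and for $i = 1, \dots, m$ with $m = \chi n$, pick any $v_i \in V_{i-1}$, let $c_i$ be a color maximizing the size of the monochromatic neighborhood $V_i := \{w \in V_{i-1}\setminus\{v_i\} : \phi(v_i w) = c_i\}$, and iterate. This guarantees: (i) $\phi(v_i v_j) = c_i$ for all $i < j \leq m$; (ii) $\phi(v_i w) = c_i$ for every $w \in V_m$ and $i \leq m$; and (iii) $|V_i| \geq (|V_{i-1}|-1)/q_i$, where $q_i$ is the number of distinct colors of edges from $v_i$ into $V_{i-1}\setminus\{v_i\}$.

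Next I would do case analysis on the sequence $c_1, \dots, c_m$. If at least $n$ of them are pairwise distinct, then the corresponding $n$ vertices form a lexicographically colored $K_n$, which contains a lexicographic copy of $H$, and we are done. Otherwise, by pigeonhole some color $c^*$ appears at least $m/(n-1) > \chi$ times, yielding a monochromatic $K_\chi$ in color $c^*$ whose vertices are all color-$c^*$-adjacent to every vertex of $V_m$. To extend this to a monochromatic $H$, I fix a proper $\chi$-coloring $U_1 \cup \cdots \cup U_\chi$ of $V(H)$, place one designated vertex of each $U_j$ at the corresponding vertex of the monochromatic $K_\chi$, and embed the remaining $n - \chi$ vertices of $H$ into $V_m$ greedily, using bounded degree $\Delta$ together with further refinement inside $V_m$ to produce enough color-$c^*$ structure to support the embedding.

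The main obstacle is controlling the reduction factor $\prod q_i$, since a priori some $q_i$ might be very large and cause $|V_m|$ to collapse. The key idea is a dichotomy at each step: if $q_i \leq n^{c\Delta}$ for a suitable constant $c < C$, we continue the refinement; if $q_i > n^{c\Delta}$, we instead shortcut to building a rainbow copy of $H$ directly, by taking the large rainbow star at $v_i$ as a pivot and invoking the bipartite bound \cref{thm:upper bound bipartite} on appropriate monochromatic layers among the star's leaves. Under the assumption $q_i \leq n^{c\Delta}$ throughout, we obtain $|V_m| \geq n^{(C - c)\Delta \chi n}$, which for $C \gg c$ is large enough to carry out the greedy monochromatic embedding inside $V_m$. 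The most delicate parts of the argument are making the rainbow dichotomy precise (converting a rainbow star at $v_i$ into a rainbow copy of a non-bipartite graph) and controlling the monochromatic extension so that the exponent does not inflate beyond $O(\Delta \chi n)$; balancing the constants $c$ and $C$ across these two branches is what yields the final bound.
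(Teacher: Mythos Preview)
Your proposal has a real gap in the monochromatic branch. After finding indices $i_1<\dots<i_\chi$ with $c_{i_1}=\dots=c_{i_\chi}=c^*$, you obtain a monochromatic $K_\chi$ on $v_{i_1},\dots,v_{i_\chi}$ together with $c^*$-edges from each of these to all of $V_m$. But to embed the remaining $n-\chi$ vertices of $H$ you need $c^*$-edges \emph{inside} $V_m$, and your refinement gives no control there: it is entirely consistent with your construction that $c^*$ does not appear on a single edge of $V_m$. (For a concrete bad example, take $v_1,\dots,v_m$ with all edges $v_iw$, $w\in W$, colored $c^*$, and color $K_N[W]$ rainbow; your procedure selects the $v_i$ in order with $V_m=W$.) ``Further refinement inside $V_m$'' does not help, because nothing links the popular color inside $V_m$ to the already-chosen $c^*$. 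The rainbow dichotomy is similarly underspecified: a large rainbow star at $v_i$ is far from a rainbow copy of a non-bipartite $H$, and invoking \cref{thm:upper bound bipartite} only yields a canonical (possibly monochromatic or lexicographic) copy of some \emph{bipartite} graph, not a rainbow copy of $H$.

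The paper's proof replaces the naive ``largest monochromatic neighborhood'' step by a step based on dependent random choice (\cref{lemma:one_step}): either the current set already contains a rainbow $H$ (via \cref{lemma:sparsity at every vertex implies rainbow copies}), or one passes to a subset $X$ of size $|X|\geq (8n^6)^{-\Delta}|A_i|$ in which every $\Delta$-tuple has $n$ common neighbors in a single color $c_i$ (a \emph{DRC-step}), or one passes to the $c_i$-neighborhood of a single vertex for a color $c_i$ not previously used in this way (a \emph{star-step}). After $\chi n$ iterations, either $n-1$ star-steps occurred (giving a lexicographic $K_n$, as in your argument), or among the $\geq (\chi-1)(n-1)+1$ DRC-steps one finds either $\chi$ with the same color or $n$ with distinct colors. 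The DRC property is precisely what makes the greedy embedding work: placing color class $S_\ell$ inside $A_{i_\ell}$ and embedding backwards, each vertex has at most $\Delta$ already-embedded neighbors, and these have $n$ common neighbors of the right color in $A_{i_\ell}$. This is the missing ingredient in your scheme, and it is also what fixes the shrinkage at $(8n^6)^{\Delta}$ per step, yielding $N=(8n^6)^{\Delta\chi n}$.
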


\subsection{Lower bounds}

We begin by proving the general lower bound on $ER(H)$ from the Theorem~\ref{thm:lower bound bipartite}, which turns out to be tight for bipartite $H$.

\begin{proof}[Proof of Theorem~\ref{thm:lower bound bipartite}.]
As $ER(H) \geq n$, the result is trivially true if $d \leq 4$, so we assume henceforth that $d > 4$. We will consider a random coloring of $E(K_N)$ using $n$ colors, where $N=n^{(d-4)/2}$. As $H$ has more than $n$ edges, since $e(H)=nd/2>n$, there is no rainbow copy of $H$ in such a coloring. We now estimate the probability that there is a monochromatic or lexicographic copy \nolinebreak of \nolinebreak $H$.

There are $n!$ ways of ordering the vertices of $H$ as $v_1,\dots,v_n$, and then at most $N^n$ ways of picking $u_1,\dots,u_n \in V(K_N)$. For such a choice, the probability that it defines a monochromatic or lexicographic copy of $H$ (according to the given ordering of $H$) can be upper-bounded as follows. Let $N^+(v_i)$ denote the set of $j$ such that $v_i v_j \in E(H)$ and $j>i$, and let $d^+(v_i)=\ab{N^+(v_i)}$. If $u_1,\dots,u_n$ define a monochromatic or lexicographic copy of $H$, then we have that all edges between $u_i$ and $\{u_j: j \in N^+(v_i)\}$ have the same color. The probability that this happens is exactly $n^{1-d^+(v_i)}$. Therefore, the probability that $u_1,\dots,u_n$ is a monochromatic or lexicographic copy of $H$ is at most
    \[
    \prod_{i=1}^n n^{1-d^+(v_i)} = n^n \cdot n^{-\sum_{i=1}^n d^+(v_i)} = n^n \cdot n^{-dn/2} = n^{-(d-2)n/2} .
    \]
    By the union bound, the probability of finding any monochromatic or lexicographic copy of $H$ is at most
    \[
        n! \cdot N^n \cdot n^{-(d-2)n/2} < \left( \frac{nN}{n^{(d-2)/2}}\right)^n = \left( \frac{N}{n^{(d-4)/2}}\right)^n =1.\qedhere
    \]
Hence, there exists a coloring of $K_N$ which contains no canonical copy of $H$. 
\end{proof}
\begin{remark}
This proof actually implies a slightly stronger statement, i.e. that there exists an edge-coloring of the complete graph on $N=n^{\Omega(d)}$ vertices without a rainbow or a \textit{weakly lexicographic} copy of $H$. Here, a weakly-lexicographic copy of $H$ is a $n$-tuple of vertices $v_1, \dots, v_n$ where for each $i$ the edges $\{v_iv_j\mid j>i\}$ have the same color, but these colors are not necessarily \nolinebreak all \nolinebreak different.
\end{remark}

We now turn to \cref{thm:lower bound non-bipartite}, which supplies a much stronger lower bound if $H$ is not bipartite. 
Before giving this proof, we need the following simple lemma.

\begin{lemma}\label{lem:many color lex}
    Let $H$ be an $n$-vertex graph with average degree $d$ and maximum degree $\Delta$. Then in any lexicographic coloring of $H$, at least $nd/(2\Delta)$ colors are used.
\end{lemma}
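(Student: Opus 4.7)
The plan is to prove this directly from the definition of a lexicographic coloring via a simple double-counting / pigeonhole argument. Fix the vertex order $v_1, \dots, v_n$ of $H$ that witnesses the lexicographic coloring, and for each $i$ let $d^+(v_i)$ denote the number of forward neighbors of $v_i$, i.e.\ $d^+(v_i) = \ab{\{j > i : v_iv_j \in E(H)\}}$. By the definition of a lexicographic coloring, each vertex $v_i$ with $d^+(v_i) \geq 1$ is assigned a color $c_i$ (namely, the color of all the forward edges at $v_i$), and these colors $c_i$ are distinct for different $i$. Hence the number of colors used is exactly $\ab{\{i : d^+(v_i) \geq 1\}}$.

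Next I would observe two simple facts. First, summing forward degrees counts each edge once, so $\sum_{i=1}^n d^+(v_i) = e(H) = nd/2$. Second, $d^+(v_i) \leq d_H(v_i) \leq \Delta$ for every $i$. Combining these,
\[
\frac{nd}{2} \;=\; \sum_{i : d^+(v_i)\geq 1} d^+(v_i) \;\leq\; \Delta \cdot \ab{\{i : d^+(v_i)\geq 1\}},
\]
which rearranges to $\ab{\{i : d^+(v_i) \geq 1\}} \geq nd/(2\Delta)$, giving the desired lower bound on the number of colors.

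There is essentially no obstacle here: the lemma is an immediate consequence of the averaging principle once one unpacks the definition. The only mild subtlety to be careful about in the writeup is to emphasize that distinct forward neighborhoods contribute \emph{distinct} colors (this is what the lexicographic condition buys us), which is exactly why counting indices $i$ with $d^+(v_i) \geq 1$ yields a valid lower bound on the number of colors rather than merely on the number of nonempty forward neighborhoods.
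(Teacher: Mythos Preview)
Your proof is correct and takes essentially the same approach as the paper. The paper phrases the same argument in vertex-cover language (the set $S$ of vertices with positive forward degree is a vertex cover, hence $\ab{S}\Delta \geq e(H) = nd/2$), but this is exactly your inequality $\sum_{i:d^+(v_i)\geq 1} d^+(v_i) \leq \Delta \cdot \ab{\{i:d^+(v_i)\geq 1\}}$ combined with $\sum_i d^+(v_i) = nd/2$.
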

\begin{proof}
    Fix an ordering of the vertices of $H$ as $v_1,\dots,v_n$, and consider the lexicographic coloring associated to this ordering. The number of colors used is equal to the number of vertices with positive forward degree, i.e.\ the number of vertices $v_i$ which have a neighbor $v_j$ with $j>i$. Let $S$ be the set of vertices with positive forward degree. By definition, $S$ is a vertex cover of $H$, i.e.\ every edge is incident to at least one vertex in $S$. Indeed, the left endpoint of every edge has positive forward degree, and is thus in $S$.

    The total number of edges incident to $S$ is at most $\ab S \Delta$, but is also equal to $nd/2$, the total number of edges in $H$. Hence $\ab S \geq nd/(2\Delta)$, as claimed.
\end{proof}
We remark that this proof actually shows that in any lexicographic coloring of $H$, at least $\tau(H)$ colors are used, where $\tau(H)$ denotes the vertex cover number of $H$. We then combine this with the simple lower bound $\tau(H) \geq nd/(2\Delta)$. In particular, one can strengthen \cref{thm:lower bound non-bipartite} to say that $ER(H) \geq 2^{\tau(H)-1}$ when $H$ is non-bipartite.

Given \cref{lem:many color lex}, we can prove \cref{thm:lower bound non-bipartite}.

\begin{proof}[Proof of Theorem~\ref{thm:lower bound non-bipartite}.]
    Let $r=\lceil \frac{nd}{2\Delta}\rceil -1$ and $N=2^r$. We have the standard hypercube coloring of $K_N$, where we identify $V(K_N)$ with $\{0,1\}^r$, and color every edge according to the first coordinate where its endpoints differ. The key property of this coloring is that every color class is bipartite.

    This immediately implies that this coloring contains no monochromatic copy of $H$, as $H$ is not bipartite. Additionally, the total number of colors used is $r<nd/(2\Delta)$, and thus by \cref{lem:many color lex}, there can be no lexicographic copy of $H$ in this coloring. 
    Finally, as $r < dn/2 = e(H)$, there is trivially no rainbow copy either.  
    This proves that $ER(H) \geq N$, as claimed.
\end{proof}

\subsection{Upper bounds}

In this section, we will prove upper bounds on the Erd\H{o}s--Rado numbers, both for bipartite and non-bipartite graphs. Let us begin with a folklore lemma which can be used to find rainbow copies of $H$ in a graph $G$ where no color appears at any vertex too many times (see e.g.\ \cite{MR2016871} for much more precise results). We give the proof for completeness.

\begin{lemma}\label{lemma:sparsity at every vertex implies rainbow copies}
Let $H$ be a graph with $|V(H)|=n$, and let $K_{ns}$ be an edge-colored complete graph on the vertex set $V_1\cup\dots\cup V_n$, where $|V_1|=\dots=|V_n|=s$. Suppose that for each pair $ij\in E(H)$, every vertex $v_i\in V_i$ sends at most $s/n^4$ edges of the same color to the set $V_j$. Then this coloring of $K_{ns}$ contains a rainbow copy of $H$.
\end{lemma}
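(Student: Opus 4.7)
The plan is to use a simple probabilistic argument. For each $i \in [n]$, sample $v_i \in V_i$ uniformly and independently, and consider the resulting copy of $H$ with $i \mapsto v_i$. Since the parts $V_1,\dots,V_n$ are disjoint, this is automatically a valid embedding; it remains to show that with positive probability the $|E(H)|$ edges $v_iv_j$ (for $ij \in E(H)$) all receive distinct colors. The main step is to bound, for each pair of distinct edges $e,e' \in E(H)$, the probability that they receive the same color by $1/n^4$, and then apply a union bound over the at most $\binom{\binom{n}{2}}{2} < n^4/4$ pairs of edges.

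To carry out the pairwise bound, I would split into two cases depending on whether the two edges share a vertex. If $e = ij$ and $e' = ij'$ share the vertex $i$, then conditioning on $v_i$, the hypothesis says that for each color $c$ the number of $w \in V_j$ with $v_iw$ of color $c$ is at most $s/n^4$, and similarly for $V_{j'}$. Summing over colors,
\[
\Pr[\mathrm{col}(v_iv_j)=\mathrm{col}(v_iv_{j'})\mid v_i] = \sum_c \frac{N_j(v_i,c)}{s}\cdot\frac{N_{j'}(v_i,c)}{s} \le \frac{1}{n^4}\sum_c\frac{N_{j'}(v_i,c)}{s} = \frac{1}{n^4}.
\]
If $e=ij$ and $e'=i'j'$ are vertex-disjoint, I would instead condition on $v_i,v_j,v_{i'}$, so that the color $c$ of $v_iv_j$ is fixed, and then the hypothesis applied to $v_{i'}$ and the edge $i'j'$ gives at most $s/n^4$ choices of $v_{j'}\in V_{j'}$ for which $v_{i'}v_{j'}$ has color $c$, yielding probability at most $1/n^4$ again.

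Combining the two cases with the union bound shows that the probability some two edges of the random embedded copy share a color is strictly less than $1$, so some choice of $v_1,\dots,v_n$ yields a rainbow copy of $H$. The only mild subtlety — and the one place where care is needed — is handling the two edge-overlap cases symmetrically so that in each one the factor of $s/n^4$ really can be extracted before summing over the free vertex; the disjoint case in particular requires choosing the right conditioning so that exactly one free endpoint remains random. No further obstacle beyond bookkeeping is expected.
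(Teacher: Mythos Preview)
Your proof is correct and follows essentially the same approach as the paper: sample one vertex uniformly from each part, bound the probability that any fixed pair of edges of $H$ collide by $1/n^4$, and apply a union bound over fewer than $n^4$ pairs. The only cosmetic difference is that in the shared-vertex case the paper conditions on all vertices except one endpoint (as you do for disjoint edges), whereas you condition only on the common vertex and sum over colors; both arguments yield the same $1/n^4$ bound.
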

\begin{proof}
Will prove that with positive probability, picking one vertex uniformly at random from each set $V_i$ gives a rainbow copy of $H$. More precisely, let $v_i\in V_i$ be a uniformly random vertex. There are two types of bad events: that the edge $v_iv_j$ shares the color with the edge $v_iv_k$, for distinct indices $i, j, k\in [n]$, and that the edge $v_iv_j$ shares the color with the edge $v_kv_\ell$, for distinct indices $i, j, k, \ell\in [n]$. If none of these bad events occurs, the copy of $H$ formed by $v_1, \dots, v_n$ is rainbow.

It is not hard to see that for any collection of indices, the probability of this bad event is at most $1/n^4$. The reason for this is that, once all vertices but $v_j$ are revealed, there is at most $s/n^4$ options for $v_j$ for which the edge $v_iv_j$ gets a problematic color (i.e., the same color as $v_iv_k$ in the first case and as $v_kv_{\ell}$ in the second case). Also, note that the number of bad events is at most $n(n-1)(n-2)+n(n-1)(n-2)(n-3)<n^4$. Therefore, by the union bound, with positive probability no bad event occurs and $v_1, \dots, v_n$ form a rainbow copy of $H$, completing the proof.
\end{proof}

Let us now focus on the case when $H$ is bipartite. To prove the upper bounds we promised, we need the following lemma, which easily follows from classical estimates on the extremal numbers of degenerate bipartite graphs.

\begin{lemma}\label{lemma:turan for degenerate bipartite graphs}
Let $H$ be a $t$-degenerate bipartite graph on $n$ vertices. Then every $m \times m$ bipartite graph with $m \geq 2^{12t}n^{16t+2}$ vertices and with edge density at least $\frac 12 n^{-4}$ contains a copy of $H$.
\end{lemma}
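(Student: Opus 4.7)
The plan is to reduce this statement to a standard Tur\'an-type bound for degenerate bipartite graphs, proved via the dependent random choice (DRC) method. The key tool is the Alon--Krivelevich--Sudakov theorem, which asserts that for every $t$-degenerate bipartite graph $H$ on $n$ vertices, $\mathrm{ex}(N, H) \leq C \cdot n^{O(1)} N^{2 - 1/(4t)}$ for some absolute constant $C$. Applied to our setting (with $N = 2m$ and $e(G) \geq \tfrac{1}{2}n^{-4}m^2$ by the density hypothesis), the lemma reduces to verifying the inequality $m^{1/(4t)} \gtrsim n^{O(1) + 4}$, i.e., $m \gtrsim n^{O(t)}$, and the specific bound $m \geq 2^{12t} n^{16t+2}$ is calibrated precisely to satisfy this with a comfortable margin.

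For a self-contained proof, I would carry out the DRC argument directly. Sample a random multi-subset $T \subseteq B$ of size $s = 4t$ (with replacement), set $U^* = \bigcap_{b \in T} N_G(b) \subseteq A$, and apply Jensen's inequality to obtain $\mathbb{E}[|U^*|] \geq m\rho^s$, where $\rho = \tfrac{1}{2}n^{-4}$. For any fixed $t$-subset $S \subseteq A$ with $|N(S)| < q$, the probability that $S \subseteq U^*$ is at most $(q/m)^s$, so by choosing $q$ such that $\binom{m}{t}(q/m)^s < \tfrac{1}{2}m\rho^s$ and deleting one vertex per bad $t$-subset, one obtains a deterministic $U \subseteq A$ with $|U| \geq 2n$ and every $\leq t$-subset of $U$ having at least $q$ common neighbors in $B$. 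A symmetric second DRC carried out in $G[U, B]$ then produces $V \subseteq B$ with $|V| \geq 2n$ and the analogous common-neighborhood property into $U$. Finally, embed $H$ into $U \cup V$ greedily in a degeneracy order $v_1, \dots, v_n$: fix a bipartition $H = X \cup Y$ and place $X$-vertices into $U$, $Y$-vertices into $V$. At each step, the at-most-$t$ previously-embedded neighbors of $v_i$ lie on the opposite side, so the appropriate DRC property supplies at least $2n$ common neighbors on $v_i$'s side, and since fewer than $n$ vertices have been used, an unused common neighbor exists.

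The main obstacle is the bookkeeping across the two DRC steps. After the first DRC, the bipartite graph $G[U, B]$ has density only about $q/m$, substantially smaller than the original density $\rho$, so the parameters $s = 4t$, the common-neighborhood threshold $q$ in the first step, and the size and common-neighborhood thresholds $2n$ in the second step must be jointly calibrated so that the second DRC still succeeds and both $U$ and $V$ simultaneously retain the properties needed for the two-sided embedding. Tracking these quantities through both DRC steps---including the loss one absorbs when the common-neighborhood threshold has to be preserved after intersecting with $V$ rather than the full $B$---is exactly what produces the explicit bound $m \geq 2^{12t} n^{16t+2}$.
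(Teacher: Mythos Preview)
Your first paragraph is exactly the paper's proof: it quotes the Alon--Krivelevich--Sudakov bound $\ex(2m,H)\le n^{1/(2t)}(2m)^{2-1/(4t)}$ and verifies that $\tfrac12 n^{-4}m^2$ exceeds this when $m\ge 2^{12t}n^{16t+2}$. The paper stops there; your self-contained two-round DRC sketch is extra and not needed (and, as you note yourself, the step where the common-neighborhood property of $U$ into $B$ must survive restriction to $V$ is not actually resolved in your outline).
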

\begin{proof}
This is a direct corollary of the classical theorem of Alon, Krivelevich, and Sudakov \cite[Theorem 3.5]{AKS03}, which states that $\ex(2m, H)\leq n^{\frac{1}{2t}}(2m)^{2-\frac{1}{4t}}$ for every $t$-degenerate graph $H$. 
We have $\frac 12 n^{-4} \cdot m^2 > n^{\frac{1}{2t}}(2m)^{2-\frac{1}{4t}}$ for $m \geq 2^{12t}n^{16t+2}$. 
Therefore, every $m \times m$ bipartite graph with density at least $\frac 12 n^{-4}$ contains a copy of $H$. 
\end{proof}

We are now ready to combine \cref{lemma:sparsity at every vertex implies rainbow copies,lemma:turan for degenerate bipartite graphs} to prove \cref{thm:upper bound bipartite}, which states that the Erd\H{o}s--Rado number of a $t$-degenerate bipartite graph $H$ on $n$ vertices satisfies $ER(H)\leq n^{O(t)}$.

\begin{proof}[Proof of \cref{thm:upper bound bipartite}]
We prove that under the assumptions of the theorem, we have $ER(H)\leq N \coloneqq 2^{24t+1}n^{32t+6}$. So suppose, for the sake of contradiction, that we are given an edge-coloring of the graph $K_N$ without a canonical copy of $H$.

Let us partition the vertex set of $K_N$ in an arbitrary way into sets $U_1, \dots, U_n$ of size $|U_1|=\dots=|U_n|=2^{24t+1}n^{32t+5}\eqqcolon 2s$
Furthermore, for each pair $ij\in E(H)$, let us define $X_{i, j}$ as the set of vertices in $U_i$ which send at least $s/n^{4}$ edges of the same color to $U_j$. 

The first observation is that if we have $|\bigcup_{j \in N_H(i)}X_{i, j}|\leq |U_i|/2$ for all $i\in [n]$, then $K_N$ contains a rainbow copy of $H$. To show this, for each $i$, select an arbitrary set $V_i\subseteq U_i\setminus \bigcup_{j \in N_H(i)}X_{i, j}$ of size $s$ and apply Lemma~\ref{lemma:sparsity at every vertex implies rainbow copies} to the subgraph induced on $\bigcup_{i=1}^n V_i$. By the definition of the sets $X_{i, j}$, we know that for every $ij\in E(H)$, each vertex $v_i\in V_i$ sends at most $s/n^4$ edges of the same color to $U_j$, and hence Lemma~\ref{lemma:sparsity at every vertex implies rainbow copies} guarantees the existence of the rainbow copy of $H$.

Henceforth, we may assume that $|\bigcup_{j \in N_H(i)}X_{i, j}|\geq |U_i|/2$ for some $i$. By averaging, there exists $j$ such $|X_{i, j}| \geq |U_i|/(2n) \geq 2^{24t}n^{32t+4}$. For each vertex $v\in X_{i,j}$ denote by $c(v)$ a color in which $v$ sends at least $s/n^4$ edges to $U_j$. 

A simple pigeonhole principle argument shows that there is a subset $Y\subseteq X_{i, j}$ of size at least $\sqrt{|X_{i, j}|}\geq 2^{12t}n^{16t+2}$ such that the colors $c(v)$ are either the same for all $v\in Y$ or different for all $v\in Y$. The reason for this is simple: if fewer than $\sqrt{|X_{i, j}|}$ different colors appear among $\{c(v)\mid v\in X_{i, j}\}$, then there must be one color which is repeated at least $\frac{|X_{i, j}|}{\sqrt{|X_{i, j}|}}=\sqrt{|X_{i, j}|}$ times.

Let us now consider the bipartite graph between $Y$ and $U_j$, which contains, for all $v \in Y$, all edges of color $c(v)$ incident to $v$. This bipartite graph has at least 
$|Y|\cdot s/n^4=\frac{|Y||U_j|}{2n^4}$ edges, so its density is at least $\frac{1}{2n^4}$. By averaging, there exists a subset $Z\subseteq U_j$ of size $|Y|$ such that the density between $Y$ and $Z$ is at least $\frac 12 n^{-4}$. 
Since $|Z|=|Y|\geq 2^{12t}n^{16t+2}$, \cref{lemma:turan for degenerate bipartite graphs} guarantees that this bipartite graph contains a copy of $H$. 

If all vertices $v\in Y$ have the same $c(v)$, then this copy of $H$ is monochromatic, and if all the colors $c(v)$ are different, then this is a lexicographically colored copy of $H$. In any case, we have shown that a canonically colored copy of $H$ can be found in $K_N$, which completes the proof. 
\end{proof}

We now turn to the proof of \cref{thm:upper bound non-bipartite}, for which we will need the following lemma.

\begin{lemma}\label{lemma:one_step}
Let $H$ be an $n$-vertex graph of maximum degree $\Delta\geq 2$, let $K_s$ be an edge-colored complete graph on $s\geq (2n)^{7\Delta}$ vertices, and let $C$ be a set of at most $n$ colors. If $K_s$ does not contain a rainbow copy of $H$, then there exists a set $X\subseteq V(K_s)$ of size 
$|X|\geq (8n^6)^{-\Delta} \cdot s$ and a color $c$ such that either:
\begin{itemize}
    \item every $\Delta$-tuple of vertices in $X$ has at least $n$ common neighbors in color $c$, or
    \item $c\notin C$ and all vertices in $X$ have a common neighbor $v\notin X$ in color $c$.
\end{itemize}
\end{lemma}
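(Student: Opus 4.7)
My plan is a case analysis based on color-concentration at individual vertices, targeting the density threshold $\rho := (8n^6)^{-\Delta}$ for $|X|$.

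The easy case yields the second conclusion directly. If some vertex $v \in V(K_s)$ and color $c \notin C$ satisfy $|N_c(v)| \geq \rho s$, set $X := N_c(v)$. Then $|X| \geq \rho s$, $v \notin X$, $c \notin C$, and $v$ is a common color-$c$ neighbor of every vertex in $X$, matching the second conclusion.

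Otherwise, every color $c \notin C$ satisfies $|N_c(v)| < \rho s$ at every $v$, and I would produce the first conclusion via a dependent-random-choice (DRC) argument on some color $c \in C$. The no-rainbow-$H$ hypothesis forces density to accumulate in some color in $C$: otherwise a greedy rainbow embedding of $H$ would succeed. Fixing any ordering $V(H) = \{h_1, \ldots, h_n\}$ and placing $h_i \mapsto v_i \in V(K_s)$ one at a time, each new edge $v_i v_j$ (with $h_j$ a backward neighbor of $h_i$) must receive a color distinct from all previously used colors and distinct from the other new edges incident to $v_i$. Since every placed $v_j$ has fewer than $\rho s$ neighbors of any fixed color, only $n^{O(1)} \cdot \rho s$ choices of $v_i$ are forbidden at each step, and the bound $s \geq (2n)^{7\Delta}$ leaves valid choices available, contradicting the hypothesis. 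Hence some color $c \in C$ must carry a dense substructure, and DRC on this substructure --- picking a random small set $T$, setting $X := N_{G_c}(T)$, and using Markov's inequality to discard $\Delta$-subsets of $X$ with small common $c$-neighborhood --- then delivers $X$ with $|X| \geq \rho s$ and every $\Delta$-tuple of $X$ having at least $n$ common color-$c$ neighbors.

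The main obstacle is the quantitative bookkeeping: each DRC refinement loses a factor of roughly $n^6$ (a factor of $n$ from pigeonhole on the $\leq n$ colors in $C$ plus $n^{O(1)}$ from Markov's inequality when bounding the number of bad $\Delta$-subsets), and these losses compound over an argument of depth up to $\Delta$, giving the exponent in $(8n^6)^{-\Delta}$. The delicate part is keeping the rainbow-$H$ fallback available at every level where density might fail, so that the argument must terminate in one of the two stated conclusions without violating the threshold $s \geq (2n)^{7\Delta}$.
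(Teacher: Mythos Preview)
Your high-level plan (case split on color concentration, then DRC) matches the paper, but two steps are genuinely broken.

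\textbf{The density step is missing.} In your hard case you assert that the no-rainbow-$H$ hypothesis ``forces density to accumulate in some color in $C$,'' justified by a greedy embedding. But greedy embedding only tells you that \emph{some single vertex} has a large monochromatic neighbourhood in \emph{some} colour; this is far too weak to feed into DRC, which needs the colour-$c$ graph to have average degree $\Omega(s/n^6)$. The paper closes this gap differently: it sets the concentration threshold at $s/(2n^5)$ (not your $\rho s = (8n^6)^{-\Delta}s$), defines $B_c$ as the set of vertices with $\geq s/(2n^5)$ colour-$c$ neighbours, and shows $\bigl|\bigcup_{c\in C}B_c\bigr|\geq s/2$. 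The argument is by contrapositive: on a set of $s/2$ vertices \emph{none} of which lies in any $B_c$, a random (not greedy) embedding produces a rainbow $H$ --- this is the paper's Lemma~2.4, and the randomness is needed to handle the constraint that the several new edges added at a single step be mutually distinct in colour, which your greedy sketch does not address. Pigeonhole over $|C|\leq n$ then gives $|B_c|\geq s/(2n)$ for some $c\in C$, hence $\geq s^2/(8n^6)$ colour-$c$ edges, hence average degree $\geq s/(4n^6)$.

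\textbf{The exponent $\Delta$ does not come from iteration.} You describe the factor $(8n^6)^{-\Delta}$ as arising from losses ``compounding over an argument of depth up to $\Delta$,'' with the rainbow fallback needed ``at every level.'' This is not how the bound arises. There is a single application of DRC with parameter $t=\Delta$: if the colour-$c$ graph has average degree $d\geq s/(4n^6)$, then
\[
\frac{d^\Delta}{s^{\Delta-1}}-\binom{s}{\Delta}\Bigl(\frac{n}{s}\Bigr)^{\Delta}\geq \frac{s}{(4n^6)^\Delta}-\frac{n^\Delta}{\Delta!}\geq \frac{s}{(8n^6)^\Delta},
\]
using $s\geq (2n)^{7\Delta}$ for the last inequality. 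The exponent $\Delta$ is the DRC sampling parameter, and there is no recursion.
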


\cref{lemma:one_step} is proved by an application of the dependent random choice method (see e.g.\ \cite{FS11} for an introduction to this method). For our purposes, the following simple lemma is sufficient.

\begin{lemma}[{\cite[Lemma 2.1]{FS11}}]\label{lemma:dep_rand_choice}
Let $\ell, m, k$ be positive integers. Let $G = (V, E)$ be a graph with $|V| = N$ vertices and average degree $d = 2|E(G)|/N$. If there is a positive integer $t$ such that
$$\frac{d^{t}}{N^{t-1}}-\binom{N}{k}\left(\frac{m}{N}\right)^{t}\geq \ell$$
then $G$ contains a subset $X$ of at least $\ell$ vertices such that every $k$ vertices in $X$ have at least $m$ common neighbors.
\end{lemma}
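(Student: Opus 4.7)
The plan is to use the \emph{dependent random choice} method. I would sample $t$ vertices $v_1, \ldots, v_t$ from $V$ independently and uniformly at random with replacement, and consider their common neighborhood $A \coloneqq N(v_1) \cap \cdots \cap N(v_t)$. The desired set $X$ will be obtained from $A$ by deleting one vertex from each \emph{bad} $k$-subset of $A$, where a $k$-subset $S \subseteq V$ is called bad if $\lvert \bigcap_{u \in S} N(u) \rvert < m$.

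First I would lower-bound $\mathbb{E}[|A|]$. By linearity of expectation and independence of the $v_i$,
$$\mathbb{E}[|A|] = \sum_{u \in V} \Pr[u \in A] = \sum_{u \in V} \left(\frac{\deg(u)}{N}\right)^{t} \geq N \left(\frac{d}{N}\right)^{t} = \frac{d^{t}}{N^{t-1}},$$
where the inequality is Jensen's inequality applied to the convex function $x \mapsto x^{t}$, using that the average degree of $G$ is $d$. Next, I would bound the expected number $Y$ of bad $k$-subsets contained in $A$. For a fixed bad $k$-subset $S$, the event $S \subseteq A$ is equivalent to every $v_i$ lying in $\bigcap_{u \in S} N(u)$, and since this common neighborhood has size less than $m$, this event occurs with probability less than $(m/N)^{t}$. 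Summing over all $k$-subsets of $V$, we get $\mathbb{E}[Y] < \binom{N}{k}(m/N)^{t}$.

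Combining these two estimates,
$$\mathbb{E}\bigl[|A| - Y\bigr] \geq \frac{d^{t}}{N^{t-1}} - \binom{N}{k}\left(\frac{m}{N}\right)^{t} \geq \ell$$
by hypothesis, so there is some outcome $(v_1, \ldots, v_t)$ for which $|A| - Y \geq \ell$. Fixing such an outcome and deleting one vertex from each bad $k$-subset inside $A$ yields a set $X$ of size at least $\ell$ that contains no bad $k$-subset, which is precisely the claimed property. There is no substantive obstacle in this argument; the only conceptual point is choosing the right random experiment, namely that sampling $t$ vertices and intersecting their neighborhoods disproportionately favors low-codegree subsets of $V$, which is exactly what makes the union-bound plus alteration step close.
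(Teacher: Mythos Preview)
Your proof is correct and is precisely the standard dependent random choice argument. The paper does not supply its own proof of this lemma; it merely quotes it from \cite{FS11}, and your argument is exactly the one found there.
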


\begin{proof}[Proof of Lemma~\ref{lemma:one_step}]
For each color $c$, let us denote by $B_c$ the set of vertices which have at least $s/(2n^5)$ neighbors in the color $c$. If $B_c$ is nonempty for some $c\notin C$, then the second item in the lemma holds (and we are done), since we may choose this color $c$ and pick $X$ to be the neighborhood of a vertex which has $s/(2n^5)$ neighbors in $c$. So, we may assume henceforth that $B_c$ is empty when $c\notin C$.

We also claim that $\Big|\bigcup_{c\in C} B_c\Big|\geq s/2$. If this is not the case, pick a set $U\subseteq V(K_s)\setminus \bigcup_{c\in C} B_c$ of size $s/2$. No vertex of this set is incident to more than $|U|/n^5$ edges of the same color and therefore by partitioning $U$ arbitrarily into $n$ equal-sizes sets $U=U_1\cup\dots\cup U_n$, we conclude using Lemma~\ref{lemma:sparsity at every vertex implies rainbow copies} that $K_s$ contains a rainbow copy of $H$. Since this was assumed not to be the case, we must have $\Big|\bigcup_{c\in C} B_c\Big|\geq s/2$.

Since $|C| \leq n$, there must be a color $c$ satisfying $|B_c|\geq \frac{s}{2n}$. If we denote by $G$ the graph spanned by the edges of color $c$ in $K_s$, this implies that $G$ has at least $\frac{1}{2}|B_c| \cdot s/(2n^5) \geq s^2/(8n^6)$ edges and hence average degree 
$d \geq s/4n^6$. 
We will again choose this color $c$ and apply Lemma~\ref{lemma:dep_rand_choice} to $G$ to find a set $X$ of $\ell\coloneqq(8n^6)^{-\Delta} \cdot s$ vertices with the property that every $k\coloneqq\Delta$ vertices in $X$ have at least $m\coloneqq n$ common neighbors in color $c$. To this end, we pick $t=\Delta$ and verify that
\[\frac{d^\Delta}{s^{\Delta-1}}-\binom{s}{\Delta}\left(\frac{n}{s}\right)^{\Delta}\geq \frac{s}{(4n^6)^\Delta} - \frac{n^{\Delta}}{\Delta!}\geq 
\frac{1}{2}\cdot \frac{s}{(4n^6)^\Delta} \geq 
\frac{s}{(8n^6)^\Delta}, \]
since $s\geq (2n)^{7\Delta}$. This completes the proof.
\end{proof}

We now present the proof of Theorem~\ref{thm:upper bound non-bipartite}, which gives the upper bound $ER(H)\leq n^{O(\Delta \chi n)}$ for graphs of maximum degree $\Delta$ and chomatic number $\chi$.

\begin{proof}[Proof of Theorem~\ref{thm:upper bound non-bipartite}.]
We will show that $ER(H)\leq N \coloneqq (8n^6)^{\Delta \chi n}$. Assume we have an edge-coloring of $K_N$ with no canonically colored copy of $H$. Let us set $A_1=V(K_N)$ and iteratively apply Lemma~\ref{lemma:one_step} to obtain subsets $A_1\supseteq A_2\supseteq \cdots$ along with colors $c_1, c_2, \dots$ as follows.

Having defined $A_1, \dots, A_i$, we let $C_i$ be the set of colors $c_{j}$, $j< i$, for which there exists a vertex $v\in A_j$ adjacent to all of $A_{j+1}$ in color $j$; i.e., we consider the set of indices $j < i$ for which the second outcome in Lemma~\ref{lemma:one_step} holds, and let $C_i$ be the set of the corresponding colors $c_j$. Then, we apply Lemma~\ref{lemma:one_step} to the set $A_i$ with the color set $C_i$ to find a subset $A_{i+1}\subseteq A_i$ of size $|A_{i+1}|\geq (8n^6)^{-\Delta} |A_i|$ and the color $c_i$ with the following properties: either every $\Delta$-tuple of vertices of $A_{i+1}$ has $n$ common neighbors in $A_i$ in color $c_i$, or there is a vertex $v\in A_i$ adjacent to all of $A_{i+1}$ in color $c_i\in C_i$ (and in this case, we add $c_i$ to all the future sets $C_{i+1},C_{i+2},\dots$). In the first case, we say that step $i$ is a \textit{DRC-step}, while in the second case we say that step $i$ is a \textit{star-step}.

Note that the size of $A_i$ is at least 
$|A_i|\geq (8n)^{-6\Delta (i-1)} N>(2n)^{7\Delta}$ as long as $i\leq \chi n-1$ (by our choice of $N$), and hence we can define sets $A_1\supseteq\cdots\supseteq A_{\chi n}$.  

Suppose we have performed at least $n-1$ star-steps, say at indices $i_1, \dots, i_{n-1}$.
By the definition of a star-step, for each $k = 1,\dots,n-1$, there is $v_k \in A_{i_k}$ such that $v_k$ is connected in color $c_{i_k}$ to all vertices of $A_{i_k+1}$. Take also an arbitrary $v_n \in A_{\chi n}$.
The $n$-tuples of vertices $v_1, \dots, v_n$ forms a lexicographic copy of $K_n$. To see this, note that all edges from $v_k$ to $A_{i_k+1}$ are of color $c_{i_k}$, and all vertices $v_{k+1}, \dots, v_{n}$ are in $A_{i_k+1}$ since the sets $A_1, \dots, A_{\chi n}$ are nested. Moreover, the colors used for different star-steps are different (because by our choice of $C_i$ we always exclude the colors used in previous star steps), showing that $v_1, \dots, v_n$ is indeed a lexicographic copy of $K_n$. But then, it is a lexicographically colored copy of $H$ too, which we assumed does not exist in $K_s$.

Henceforth, we assume the number of star-steps is less than $n-1$. This means that at least $(\chi-1)(n-1)+1$ DRC-steps have been performed. The pigeonhole principle implies then that either $\chi$ of these DRC steps have the same color $c_i$ or $n$ steps have different colors $c_i$. Let us show now that in the first of these cases, one can find a monochromatic copy of $H$ in $K_s$, while in the second case one can find a lexicographic copy of $H$.  

Assume first there is a set of $\chi$ DRC-steps which all have the same color $c_i=c$, at indices $i_1, \dots, i_\chi$. Fix a $\chi$-coloring of $H$ with color classes $S_1, \dots, S_\chi$. We will construct an embedding $\phi$ of $H$ by embedding the vertices from the set $S_\chi$ first, and then working our way back to $S_1$, embedding $S_{\ell}$ into $A_{i_{\ell}}$ for each $\ell$.
To define the embedding $\phi$ on $S_\chi$, we can take an arbitrary injective map $\phi:S_\chi\to A_{i_\chi}$. Assuming we have embedded $S_{\chi}, \dots, S_{\ell+1}$ (for $\ell=\chi-1,\dots,1$), we define the embedding of a vertex $v\in S_\ell$ as follows: let $N_v \coloneqq N_H(v)\cap \bigcup_{j=\ell+1}^\chi S_j$ be the set of $\leq \Delta$ neighbors of $v$ in $\bigcup_{j=\ell+1}^\chi S_j$, and consider the set
$\phi(N_v)\subseteq A_{i_{\ell+1}}\subseteq A_{i_\ell+1}$. 
We choose $\phi(v)$ to be one of the common neighbors of $\phi(N_v)$ in color $c$ in $A_{i_\ell}$, which was not already used in the embedding.
This is possible because $\phi(N_v)$ has at least $n$ common neighbors in color $c$ in $A_{i_\ell}$.
Since all edges of the embedding are ensured to have the color $c$, we obtain a monochromatic copy of $H$.

If, on the other hand, we performed $n$ DRC-steps with different colors $c_i$ at indices $i_1, \dots, i_n$, the embedding of a lexicographic copy of $H$ is even easier. Ordering the vertices of $H$ arbitrarily as $v_1, \dots, v_n$, we construct an embedding $\phi$ of $H$ by embedding $v_k\in A_{i_k}$ going backwards from $k=n$ to $k=1$. When choosing where to embed $v_k$, we consider the set $\phi(N_H(v_k)\cap\{v_{k+1}, \dots, v_n\})$. Since this set contains at most $\Delta$ vertices and is contained in $A_{i_{k+1}}\subseteq A_{i_k+1}$, 
it has a common neighbor in color $c_{i_k}$ inside $A_{i_k}$. Choose $\phi(v_k)$ to be such a common neighbor. 
This embedding has the property that for each $k$, all edges of the form $v_kv_j$ with $j > k$ have the same color $c_{i_k}$. Since the colors $c_{i_k}$ are distinct, this is indeed a lexicographically colored copy of $H$.
\end{proof}

\section{Constrained Ramsey numbers of trees and paths}\label{sec:ackermann}

Throughout the section, we fix a tree $\cS$ on $s$ vertices and a path $P_t$ on $t$ vertices. Recall that $f(\cS, P_t)$ denotes the smallest integer $N$ such that every edge-coloring of $K_N$ contains either a monochromatic copy of $\cS$ or a rainbow copy of $P_t$. The main result of this section bounds $f(\cS, P_t)$ by $O(st\alpha_k(t))$, where $\alpha_k(t)$ denotes the function at the $k$-th level of the inverse Ackermann hierarchy, as defined in the introduction.

This theorem improves upon previous work of Loh and Sudakov \cite{LS09}, who showed that $f(\cS, P_t)\leq O(st \log(t))=O(st \alpha_2(t))$. Our proof follows their approach, with a couple of new twists on their ideas. Hence, let us first recall some of the results they used.

\subsection{Basic lemmas from previous work}

Fix an edge-coloring of the graph $K_N$ which has no monochromatic copy of $\cS$ or rainbow copy of $P_t$. The first basic observation is that every color has to be sparse on every subset of vertices, since it does not contain the tree $\cS$. More precisely, we have the following lemma.

\begin{lemma}[{\cite[Lemma 2.1]{LS09}}]\label{lemma:sparsity}
Let $X$ be a subset of vertices of $K_N$ and let $c$ be an arbitrary color. If an edge coloring of $K_N$ does not contain a monochromatic copy of $\cS$, the number of edges of color $c$ spanned by $X$ is at most $s|X|$.
\end{lemma}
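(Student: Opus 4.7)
The plan is to argue by contradiction: suppose that the color-$c$ subgraph $G_c$ induced on $X$ contains strictly more than $s|X|$ edges. I will show that this already forces a monochromatic copy of $\cS$ in color $c$ inside $X$, contradicting the hypothesis on the coloring, and the lemma follows.

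The first step is a standard peeling argument. Iteratively delete from $G_c$ any vertex whose current degree is at most $s$, and repeat until no such vertex remains. Each deletion removes one vertex and at most $s$ edges, so if the procedure ever exhausted all of $X$ it would have destroyed at most $s|X|$ edges in total. Since $G_c$ starts with more than $s|X|$ edges, the procedure must halt with a nonempty induced subgraph $H\subseteq G_c$ every vertex of which has degree at least $s+1$.

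Next I would embed $\cS$ into $H$ greedily. Because $\cS$ is a tree on $s$ vertices, one can order its vertices as $u_1,\ldots,u_s$ so that for each $i\geq 2$ the vertex $u_i$ has exactly one neighbor among $u_1,\ldots,u_{i-1}$ (e.g.\ via a BFS from an arbitrary root). Map $u_1$ to any vertex of $H$. Having embedded $u_1,\ldots,u_{i-1}$ into distinct vertices of $H$, let $u_j$ ($j<i$) be the unique already-embedded neighbor of $u_i$ in $\cS$ and let $v_j$ be its image. Since $\deg_H(v_j)\geq s+1$ while at most $i-1\leq s-1$ vertices of $H$ have been used, $v_j$ has an unused neighbor in $H$, which we designate as the image of $u_i$. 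This produces a copy of $\cS$ inside $H\subseteq G_c$, which is monochromatic in color $c$ and gives the desired contradiction.

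The argument is essentially routine, which is why the original source records it as a brief lemma. The only point requiring any care is the choice of peeling threshold: deleting vertices of degree at most $s$ is exactly what one needs in order for the edge-loss bound $s\cdot|X|$ and the minimum-degree requirement for the greedy embedding (namely at least $s-1$) to be compatible, so no genuine obstacle arises.
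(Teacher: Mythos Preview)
Your proof is correct. Note that the paper does not actually supply its own proof of this lemma; it merely quotes the statement from \cite{LS09} and uses it as a black box, so there is nothing in the paper to compare against. Your peeling-plus-greedy-embedding argument is the standard one and is essentially how \cite{LS09} proves it as well.
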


A simple corollary of this lemma is that any $k$ colors $c_1, \dots, c_k$ span at most $ks|X|$ edges among the vertices of $X$. 

Beyond this simple lemma, the key step in the proof from \cite{LS09} is to use the assumption $K_N$ has no monochromatic $\cS$ or rainbow $P_t$ to construct the following useful substructure.

\begin{lemma}\label{lemma:substructure}
Suppose the edges of $K_N$ have been colored in such a way that $K_N$ has no
monochromatic copy of $\cS$ and no rainbow $t$-vertex path, where $N\geq 310 st$.
Then there exists a set $R$ of ``rogue colors", a subset $U \subseteq V (G)$ with a partition $U = U_1\cup \dots\cup U_r$, an association of a distinct color $c_i \notin R$ to each $U_i$, and an orientation of some of the edges of the induced subgraph $G[U]$, which satisfy the following properties:
\begin{enumerate}[label=(\roman*)]
    \item $|U| > N/10$, $|R| < t$, and each $|U_i| < 2s$.
    \item For any edge between vertices $x \in U_i$ and $y \in U_j$ with $i \neq  j$, if it is directed $x\to y$, its color is $c_i$, if it is directed $y\to x$, its color is $c_j$, and if it is undirected, its color is in $R$.
    \item For any pair of vertices $x \in U_i$ and $y \in U_j$ (where $i$ may equal $j$), there exist at least $t$ vertices $z \notin U$ such that the color of the edge $xz$ is $c_i$ and the color of $yz$ is $c_j$. 
\end{enumerate}
\end{lemma}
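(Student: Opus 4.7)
The plan is to build the structure by a greedy peeling of vertices, maintaining an ``active set'' $W \subseteq V(K_N)$ (initially $V(K_N)$) along with the growing collection of groups $(U_i, c_i)$ and the rogue set $R$ (initially empty). At each stage, I would use Lemma~\ref{lemma:sparsity} and an averaging argument to locate a vertex $v \in W$ and a color $c$ such that $v$ has at least $|W|/(Ct)$ neighbors of color $c$ inside $W$, for a suitable absolute constant $C$. Such a pair must exist while $|W|$ is still large: the sparsity bound forces each color to carry at most $s|W|$ edges inside $W$, so if every color appeared only with ``low'' degree at every vertex, we could greedily extend a rainbow path to length $t$, using $t$ distinct colors at $t$ distinct vertices. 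Having located $(v, c)$, I would set $c_i := c$, take $U_i$ to be a subset of $N_c(v) \cap W$ of size exactly $2s-1$, and remove $U_i$ from $W$. The procedure stops as soon as the amount peeled exceeds $N/10$, ensuring both $|U| > N/10$ and $|U_i| < 2s$.

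The set $R$ of rogue colors is maintained in parallel: whenever an inter-group edge between $U_i$ and $U_j$ carries a color different from $c_i$ and $c_j$, that color is added to $R$. To bound $|R| < t$, I would argue by contradiction. Assuming $|R| \geq t$, pick a witness edge for each rogue color; the plan is to stitch these witness edges together into a single rainbow $P_t$, using short detours through common neighbors outside $U$, with the colors of detour edges chosen fresh at each step. Property (iii)---established along the way---provides enough room to pick such detour vertices and colors disjoint from everything used so far. This contradicts the assumption that there is no rainbow $P_t$, forcing $|R| < t$. Once $R$ is fixed, the orientation of property (ii) is forced: between $U_i$ and $U_j$, any edge of color $c_i$ is oriented from $U_i$ to $U_j$, any edge of color $c_j$ is oriented the other way, and edges of rogue colors remain undirected.

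Property (iii) follows from the fact that each $v_i$ had at least $|W|/(Ct) \geq N/(10Ct)$ neighbors of color $c_i$ at the moment it was chosen. Since $|U_i| < 2s$ and the $c_i$-degree inside $U_i$ is bounded by sparsity (Lemma~\ref{lemma:sparsity}), every $x \in U_i$ retains a substantial $c_i$-neighborhood outside $U$, of size proportional to $N/t$. For $x \in U_i$ and $y \in U_j$, an inclusion--exclusion calculation inside $V(K_N) \setminus U$ then yields at least $t$ common neighbors of the prescribed colors, provided $N \geq 310 \, st$---which is precisely where the constant $310$ in the hypothesis gets used.

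The main obstacle will be the rainbow-path extension argument used to cap $|R|$. Stringing together witness edges of $t$ distinct rogue colors into a \emph{single} rainbow path requires picking intermediate vertices (and thus intermediate colors) that are all fresh, and doing so $t$ times in a row without exhausting the available degree budget. The quantitative statement of property (iii)---``\textit{at least $t$ common neighbors} outside $U$ in the right colors''---is the crucial tool that lets this extension go through: at each step of the stitching, at most $O(t)$ colors have been used so far, so among the $t$ available common neighbors one can still avoid repeating a color. This is the delicate part, and it is exactly this amortization that the constants in the statement have been tuned to support.
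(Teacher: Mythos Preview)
First, the paper does not supply its own proof of this lemma: it is quoted from Loh--Sudakov \cite{LS09} and used as a black box, so strictly speaking there is no in-paper argument to compare against.

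Turning to your outline, there is a genuine gap in the derivation of property~(iii). Your peeling step finds a single vertex $v$ with at least $|W|/(Ct)$ neighbours of some colour $c$ in the current active set, and then takes $U_i$ to be $2s-1$ members of $N_c(v)\cap W$. But the degree guarantee you obtained applies only to $v$ --- which, in your construction, is not even placed into $U_i$ --- and says nothing about the $c_i$-degree of a generic $x\in U_i$. Property~(iii) requires that \emph{every} $x\in U_i$ have a large $c_i$-neighbourhood outside $U$: the inclusion--exclusion you invoke needs
\[
|N_{c_i}(x)\setminus U| + |N_{c_j}(y)\setminus U| \;\ge\; |V(K_N)\setminus U| + t
\]
for all pairs $(x,y)$, and since in your construction $|U|$ is only just above $N/10$, each of these neighbourhoods must have size a constant fraction of $N$ --- far larger than the $N/(Ct)$ you have in hand, and only for the wrong vertex at that. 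Your sentence ``since $|U_i|<2s$ and the $c_i$-degree inside $U_i$ is bounded by sparsity, every $x\in U_i$ retains a substantial $c_i$-neighbourhood'' is a non sequitur: sparsity caps the $c_i$-degree \emph{into} $U_i$, but supplies no lower bound on the total $c_i$-degree of $x$. A related omission is that nothing in your scheme prevents two peeling steps from selecting the same popular colour, so the distinctness of the $c_i$ is unaddressed.

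In the Loh--Sudakov argument the parts $U_i$ are defined so that membership in $U_i$ itself certifies the required $c_i$-degree lower bound for every vertex of $U_i$; this is what simultaneously makes property~(iii) go through, forces the colours $c_i$ to be pairwise distinct, and (via sparsity) bounds $|U_i|<2s$. Your rainbow-stitching idea for bounding $|R|$ is in the right spirit and close to what is done in \cite{LS09}, but since it rests on property~(iii), the earlier gap propagates.
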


Throughout the paper, we often encounter the situation in which we need to ensure that each vertex coming from a certain set belongs to a different $U_i$. Hence, we will say that two vertices $u, v\in U$ \emph{conflict} if they lie in the same set $U_i$.

To understand why this structure might be helpful for building a rainbow path, in the next section we show that there exists a way to glue certain shorter paths in order to obtain a long rainbow path. 

\subsection{Rainbow collections of paths}

\begin{definition}
Let $\cP=\{P_1, \dots, P_k\}$ be a collection of disjoint paths in $U$. This collection is called \textit{rainbow} if the following three conditions hold:
\begin{itemize}
    \item no two vertices $u\in P_j, v\in P_{j'}$ (where $j$ may equal $j'$) conflict, and
    \item each $P_i$ is a directed path, except maybe for the first edge which may be rogue,
    \item for all $i\in \{2, \dots, k\}$, the first edge of $P_i$ is not directed forward and it has a rogue color $r_i$ which is different from the colors of all other first edges of $P_j$ for $j\neq i$. Similarly, if $P_1$ starts on a rogue edge, its color is also different from $r_2, \dots, r_{k}$. 
\end{itemize}
The total length of a collection of paths, denoted by $\ell(\cP)$, is simply the total number of vertices of paths in the collection, i.e. 
\[\ell(\cP)=\sum_{i=1}^k |P_i|.\]
Finally, each rainbow collection defines the set of \textit{conflicting vertices}, denoted by $C(\cP)$, which is simply the union of the sets $U_i$ over all vertices $v$ belonging to a path in the collection $\cP$.
\end{definition}

The following lemma, which is implicit in \cite{LS09}, shows how to glue shorter paths from a rainbow collection $\cP$ in order to obtain a long rainbow path. We include the proof for completeness.

\begin{lemma}\label{lemma:building from directed paths}
If $\cP$ is a rainbow collection with $\ell(\cP)\geq t$, then there exists a rainbow path on at least $t$ vertices in this edge-coloring of $K_N$.
\end{lemma}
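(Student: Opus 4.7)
The plan is to concatenate the paths $P_1,P_2,\dots$ in order, inserting a single ``connector'' vertex $z_i\notin U$ between the end of $P_i$ and the start of $P_{i+1}$. Property~(iii) of Lemma~\ref{lemma:substructure} guarantees at least $t$ candidate connectors for each gluing step, so I would pick them greedily and stop as soon as the running total of vertices (path vertices plus connectors used so far) reaches $t$. In particular, if the stopping condition triggers already at step $i=1$, i.e.\ $|P_1|\geq t$, then we simply take the first $t$ vertices of $P_1$, which form a rainbow path by the argument below applied with no connectors.

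First I would check that the greedy selection of connectors succeeds. Let $x_i\in U_{a_i}$ be the terminal vertex of $P_i$ and $y_{i+1}\in U_{b_{i+1}}$ the initial vertex of $P_{i+1}$; property~(iii) then provides at least $t$ vertices $z\notin U$ with $c(x_iz)=c_{a_i}$ and $c(zy_{i+1})=c_{b_{i+1}}$, and since these candidates lie outside $U$ they cannot coincide with any path vertex. Suppose we stop at step $i\geq 2$, so that $L_{i-1}+(i-2)<t$, where $L_j=\sum_{\ell\leq j}|P_\ell|$; combined with the trivial bound $L_{i-1}\geq i-1$, this yields $i-1\leq \lfloor t/2\rfloor <t$. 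Hence at step $j\leq i$ the at most $j-2<t$ previously chosen connectors can always be avoided among the $\geq t$ available candidates.

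Next I would verify that the concatenated path is rainbow. Its edges fall into three families: directed edges internal to some $P_i$, the rogue first edges of $P_2,\dots,P_k$ (and possibly $P_1$), and the connector edges. By property~(ii), every non-rogue edge is colored $c_{a(v)}$ for a uniquely determined vertex $v\in \bigcup_j V(P_j)$ --- namely the source of the edge when it is internal to a path, or the $U$-endpoint incident to $z_i$ when it is a connector edge. A short case check (splitting on whether $v$ is interior to a path, an endpoint of $P_i$ with $i<k$, a start vertex of $P_i$ with $i\geq 2$, or the extreme vertices of $P_1$ and $P_k$) shows that this edge-to-vertex assignment is injective, so the no-conflict clause of the rainbow-collection definition forces the $c_{a(v)}$ colors to be pairwise distinct. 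The rogue colors $r_i$ are pairwise distinct by definition of a rainbow collection and disjoint from the $c$-palette because $c_j\notin R$, so the concatenated path is rainbow. The main obstacle I anticipate is only the bookkeeping in this edge-to-vertex case analysis and in the endpoints of $P_1$ and $P_k$ where no connector appears on one side; the slack in property~(iii) is so generous that distinctness of connectors never becomes tight.
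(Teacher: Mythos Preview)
Your proposal is correct and follows essentially the same approach as the paper: concatenate the $P_i$'s with connector vertices $z_i\notin U$ supplied by property~(iii) of Lemma~\ref{lemma:substructure}, chosen greedily to be distinct, and then verify the result is rainbow by noting that each non-rogue edge receives the color $c_j$ of a distinct vertex in $\bigcup_i V(P_i)$ while the rogue first edges are pairwise distinct and disjoint from the $c_j$ palette. The only cosmetic differences are that the paper glues all $k$ paths at once (relying implicitly on $k-1<t$, which follows since $\ell(\cP)\geq t$ and $|P_i|\geq 2$ for $i\geq 2$) rather than stopping early, and it phrases the rainbow check via an orientation of all non-rogue edges out of their $U$-endpoint rather than your explicit edge-to-vertex injection; these are the same argument in different words.
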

\begin{proof}
Let us denote by $u_iv_i$ the first edge of the path $P_i$, while $w_i$ stands for last vertex of this path. The idea is to connect together the paths $P_i$ and $P_{i+1}$ for all $i\in \{1, \dots, k-1\}$. 

Let us fix some $i$ and explain how to make the connection. We denote by $U_\ell$ and $U_{\ell'}$ the sets containing the vertices $w_i$ and $u_{i+1}$, i.e. $U_\ell=C(\{w_i\})$ and $U_{\ell'}=C(\{u_{i+1}\})$.

By the property $(iii)$ from Lemma~\ref{lemma:substructure}, we know that there are at least $t$ vertices $z_i\notin U$ for which $w_iz_i$ has color $c_\ell$ and $z_iu_{i+1}$ has color $c_{\ell'}$. In particular, this means that we can choose the vertices $z_1, z_2, \dots$ one by one, with the property that $z_i$ is different from $z_1, \dots, z_{i-1}$.

Then, we claim that the path $P=P_1z_1P_2z_2\dots P_{k-1}z_{k-1}P_k$ is a rainbow path of length at least $t$. Its length is $|P|=\sum_{i=1}^k |P_i|+k-1\geq t$. To show it is rainbow, we direct all the non-rogue edges on the path $P$ in the way they were directed in $U$, and we direct $u_{i+1}\to z_i, w_i\to z_i$.
We note that all vertices of the paths $P_i$ belong to different sets $U_i$ and therefore no directed edge emanating out of one of these vertices can share the color of any other such edge. On the other hand, the only edges which are not directed are the edges $u_iv_i$. But all of these edges have different rogue colors, which shows that no color on the path $P$ repeats.
\end{proof}

From now on, we will focus on the induced subgraph on $U$, where we will try to find a rainbow collection of length $\geq t$. Also, since no two vertices of the rainbow collection may come from the same set $U_i$, let us delete all edges which belong to the sets $U_i$.

\subsection{Special sequences of vertices}

To find a long rainbow collection of paths, we order the vertices of $U$ according to the median ordering, which is the ordering which maximizes the number of forward edges (in case there are multiple orderings with the same number of forward edges, we pick one arbitrarily). Since we will be working with the median ordering throughout the proof, we introduce the notation $[u, v]$ to denote the set of vertices between $u$ and $v$ in the median ordering, including $u$ and $v$. The open and half-open intervals $(u, v), [u, v)$ and $(u, v]$ are defined in a similar fashion. Also, an initial interval in the median ordering is an interval which starts at the first vertex. Finally, the median ordering we consider will be fixed and will not change throughout the proof.

A very important quantity in the proof will be the \textit{rogue degree} of a vertex $v\in U$. This is simply the number of rogue edges incident to $v$ and we denote it by $\deg_R v$. Further, for a subset $X\subseteq U$, we will denote by $\Delta_R(X)$ the maximum rogue degree of a vertex in the subgraph induced on $X$.

It will often be useful to control the rogue degrees of various vertices, and this is why we will maintain a set of \textit{bad vertices} $B$ (which one can think of as the vertices of high rogue degree), which we will be forbidden from using in the rainbow collection we are building.

Let us now explain how we will guarantee the third condition, that all but one path in a rainbow collection start from a rogue edge of a different color.

\begin{definition}\label{defn:special sequence}
Let $S\subseteq U$ be an initial interval in the median ordering and let $B\subseteq U$ be a set of bad vertices. A sequence of vertices $v_1, v_2, \dots, v_f\in S$ is called \textit{special} for a bad set $B$ if $v_1$ is the first vertex in the median ordering in $S\setminus B$ and the vertices $v_2, \dots, v_f$ are obtained by performing the following algorithm. For each $k$, we let $v_{k+1}\in S\setminus B$ be the first vertex in the median ordering which satisfies the following conditions:
\begin{enumerate}
    \item $v_{k+1}$ does not conflict with any of the vertices $v_1, \dots, v_k, u_2, \dots, u_{k}$.
    \item There exists another vertex $u_{k+1}\in U\setminus B$ such that 
    \begin{itemize}
    \item $u_{k+1}$ does not conflict with any of the vertices $v_1, \dots, v_k, v_{k+1}, u_2, \dots, u_{k}$, and
    \item the edge $v_{k+1}u_{k+1}$ has a rogue color different from all other colors of edges $v_2u_2, \dots, v_ku_k$.
    \end{itemize}
    
    If there are several such vertices $u_{k+1}$, choose one such that the index of the rogue color of the edge $u_{k+1}v_{k+1}$ is minimal.
\end{enumerate}
If such a vertex $v_{k+1}\in S\setminus B$ does not exist, we terminate the procedure and output the sequence $v_1, \dots, v_k$, setting $f=k$.
\end{definition}

An equivalent way of viewing the definition of the special sequence is as follows. Once the vertices $v_1, u_2, v_2, \dots, u_k, v_k$ have been defined, we can define a temporary bad set $B^{(k)}$ at step $k$ as the union of $B$ and $C(\{v_1, u_2v_2, \dots, u_kv_k\})$. Then, we choose $v_{k+1}$ to be the first vertex of $S\setminus B^{(k)}$ which is incident to a rogue edge of a new color in the subgraph induced on $U\setminus B^{(k)}$. 

The following lemma establishes the basic properties of a special sequence.

\begin{lemma}\label{lemma:properties of special sequences}
Let $S\subseteq U$ be an initial interval of $U$ in the median ordering and let $B\subseteq U$ be a set of bad vertices. Let $v_1,\dots,v_f$ be the special sequence defined by $S,B$, and let $u_2,\dots,u_f$ be the corresponding vertices from \cref{defn:special sequence}. 
The following three properties hold:
\begin{lemenum}
   \item No two vertices among $v_1, \dots, v_f, u_2, \dots, u_f$ are in the same set $U_i$.\label{lemit:i}
   \item $u_iv_i$ is an edge of rogue color $r_i$, different from all other colors of edges $u_jv_j$ for $j\neq i$.\label{lemit:ii}
   \item For each $i$, the vertices of $[v_1, v_i) \setminus \big(B\cup C(\{u_jv_j\mid j<i\})\big)$ are incident only to rogue edges of colors $r_2, \dots, r_{i-1}$ in the graph induced on $U\setminus\big(B\cup C(\{u_jv_j\mid j<i\})\big)$. Furthermore, the vertices of $S\setminus \big(B\cup C(\{u_jv_j\mid j\leq f\})\big)$ are only incident to rogue edges of colors $r_2, \dots, r_f$ in the graph induced on $U\setminus\big(B\cup C(\{u_jv_j\mid j<i\})\big)$. \label{lemit:iii}
   \item For all $i\geq 2$, the vertex $v_i$ comes before $u_i$. \label{lemit:iv}
\end{lemenum}
\end{lemma}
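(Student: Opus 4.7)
I would handle the four parts in the order (i), (ii), (iv), (iii), since the first three follow by unpacking the construction and only (iii) carries real content. For (i), a simple induction on the step number suffices: at step $k+1$, condition~(1) of \cref{defn:special sequence} explicitly forbids $v_{k+1}$ from sharing a $U$-block with any of $v_1,\dots,v_k,u_2,\dots,u_k$, and the first bullet of condition~(2) extends the exclusion to $u_{k+1}$. For (ii), the second bullet of condition~(2) is exactly the statement that $u_iv_i$ is rogue with color $r_i$ distinct from all earlier $r_j$, so the $r_i$'s are pairwise distinct.

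For (iv) I would argue by contradiction. Suppose $u_i$ preceded $v_i$ in the median ordering for some $i\ge 2$. Since $S$ is an initial interval and $v_i\in S$, also $u_i\in S$; and $u_i\in U\setminus B$ avoids conflict with $v_1,\dots,v_{i-1},u_2,\dots,u_{i-1}$ by the first bullet of condition~(2), so $u_i$ fulfils condition~(1) at step $i$. The edge $u_iv_i$ is rogue of new color $r_i$, with $v_i$ itself serving as a valid partner (it is outside $B$, and by part~(i) does not conflict with any earlier special vertex), so $u_i$ also fulfils condition~(2). This makes $u_i$ a candidate strictly preceding $v_i$, contradicting the greedy choice of $v_i$ as the \emph{first} such vertex.

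Part (iii) follows from the same greedy-minimality principle. Suppose toward a contradiction that some $w\in [v_1,v_i)\setminus (B\cup C(\{u_jv_j\mid j<i\}))$ has, in the induced graph on $U\setminus (B\cup C(\{u_jv_j\mid j<i\}))$, an edge $wx$ that is either non-rogue or rogue of a color outside $\{r_2,\dots,r_{i-1}\}$. I would then show that $(w,x)$ is a legitimate candidate for $(v_i,u_i)$: by~(i), the excluded set contains all previous $u_j$ and $v_j$ for $j\ge 2$, so neither $w$ nor $x$ conflicts with any earlier special vertex; and $wx$ witnesses a new rogue color. This forces $w$ to be chosen no later than $v_i$, contradicting greedy minimality (and also the tiebreaking rule on rogue colors, in case $wx$ coincides with an existing $r_j$). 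The ``furthermore'' clause is the same argument applied at step $f+1$: the algorithm terminates precisely because no valid $v_{f+1}$ exists, which by contrapositive is the stated property about $S\setminus(B\cup C(\{u_jv_j\mid j\le f\}))$.

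The expected obstacle is fine-grained bookkeeping around $v_1$. The vertex $v_1$ lies in $[v_1,v_i)$, but its $U$-block $C(\{v_1\})$ is not explicitly removed in the statement of (iii); the algorithm's natural ``temporary bad set'' $B^{(i-1)} = B\cup C(\{v_1,u_2v_2,\dots,u_{i-1}v_{i-1}\})$ does remove it. I would therefore invoke greedy minimality at the very first step (the choice of $v_2$) separately, to control edges incident to $v_1$ itself, and reconcile the two versions of the bad set so that the greedy contradiction goes through exactly as stated.
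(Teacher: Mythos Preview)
Your approach is essentially the paper's: (i) and (ii) are read off the construction, (iii) comes from the greedy minimality of each $v_{k+1}$, and (iv) is an immediate consequence --- the only cosmetic difference being that you argue (iv) directly by swapping the roles of $u_i$ and $v_i$, whereas the paper deduces (iv) from (iii) in one line (if $u_i$ preceded $v_i$, it would lie in $[v_1,v_i)\setminus(B\cup C(\{u_jv_j\mid j<i\}))$ and carry a rogue edge of the forbidden colour $r_i$). One small slip: in your argument for (iii), drop the ``$wx$ non-rogue'' alternative --- the statement only constrains \emph{rogue} edges, and a non-rogue $wx$ could not witness condition~(2) of \cref{defn:special sequence} anyway, so that branch is both unnecessary and unprovable. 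Your flagged $v_1$-bookkeeping concern (that $C(\{v_1\})$ is included in the algorithm's temporary bad set $B^{(i-1)}$ but not in the set $B\cup C(\{u_jv_j\mid j<i\})$ appearing in (iii)) is real and is glossed over in the paper's one-line justification.
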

\begin{proof}
The properties (i) and (ii) are ensured directly by the algorithm in \cref{defn:special sequence}. Property (iii) is ensured since $v_{k+1}$ is picked to be the \textit{first} vertex of the median ordering satisfying conditions of the algorithm. Finally, to show property (iv), we observe that if $u_i$ comes before $v_i$, then $u_i\in [v_1, v_i) \setminus \big(B\cup C(\{u_jv_j|j<i\})\big)$ and it is incident to an edge of color $r_i\notin \{r_2, \dots, r_{i-1}\}$, contradicting property (iii).
\end{proof}

Properties (i)--(ii) imply that $\cP_0=\{v_1, u_2v_2, \dots, u_fv_f\}$ is a rainbow collection. 
We call it the {\em special rainbow collection}.
Recall also that we are assuming that there is no rainbow path of length $t$. 
Hence, we must have $f \leq t$ by \cref{lemma:building from directed paths}; namely, no special sequence contains more than $t$ elements.

The following lemma is an almost direct consequence of \cref{lemma:sparsity} and \cref{lemit:iii}, but we decide to state it explicitly for later use.

\begin{lemma}\label{lemma:rogue edges special sequence}
Let $v_1, \dots, v_f$ be a special sequence for a bad set $B_0$ in the interval $S$ and let $x, y$ be positive integers satisfying $1\leq x<y\leq f$. Then, the number of rogue edges in the set $(v_x, v_y]\setminus \Big(B_0\cup C(\{u_iv_i\mid i\leq y\})\Big)$ is at most $sy \big|(v_x, v_y]\big|$.
\end{lemma}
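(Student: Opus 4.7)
The plan is to combine the sparsity bound of \cref{lemma:sparsity} with the color restriction provided by \cref{lemit:iii}. Setting $X := (v_x, v_y] \setminus \big(B_0 \cup C(\{u_iv_i \mid i \leq y\})\big)$, every rogue edge we wish to count is spanned by $X$, so it suffices to bound the number of rogue edges in the induced subgraph on $X$.

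The first step will be to show that every such rogue edge uses one of the $y - 1$ colors $r_2, \dots, r_y$. For $y < f$, I would apply \cref{lemit:iii} with index $i = y + 1$, which states that every vertex of $[v_1, v_{y+1}) \setminus (B_0 \cup C(\{u_jv_j \mid j \leq y\}))$ is incident, in the subgraph induced on $U \setminus (B_0 \cup C(\{u_jv_j \mid j \leq y\}))$, only to rogue edges of colors $r_2, \dots, r_y$. Since $X$ is contained in this set, any rogue edge with both endpoints in $X$ must use one of these colors. When $y = f$, the first formulation of \cref{lemit:iii} is unavailable, so one must instead appeal to its second sentence, applied to $(v_x, v_y] \subseteq S$; this yields the same conclusion with the (identical) color set $r_2, \dots, r_f$.

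The second step is an immediate invocation of \cref{lemma:sparsity}: each of the $y - 1$ colors $r_2, \dots, r_y$ spans at most $s|X|$ edges within $X$. Summing over these colors and using $|X| \leq |(v_x, v_y]|$ together with $y - 1 \leq y$ produces the claimed upper bound of $s y \, |(v_x, v_y]|$ on the number of rogue edges.

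I do not expect a genuine obstacle. As the lemma statement itself hints, the conclusion is essentially a mechanical combination of the two cited ingredients, and no additional structural property of the special sequence is required. The only minor subtlety is the boundary case $y = f$, which is precisely the reason the ``furthermore'' clause was included in \cref{lemit:iii}.
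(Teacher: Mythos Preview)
Your proof is correct and follows the same two-step strategy as the paper: restrict the possible rogue colors via \cref{lemit:iii}, then apply \cref{lemma:sparsity} to each such color. The only cosmetic difference is that the paper applies \cref{lemit:iii} with index $i=y$ rather than $i=y+1$; since $v_y\in C(\{u_yv_y\})$ is already excluded from $X$, one has $X\subseteq [v_1,v_y)\setminus\big(B_0\cup C(\{u_jv_j\mid j<y\})\big)$, yielding the slightly sharper color list $r_2,\dots,r_{y-1}$ and avoiding the $y=f$ case split altogether.
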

\begin{proof}
By \cref{lemit:iii}, the only rogue colors appearing in $(v_x, v_y]\setminus \Big(B_0\cup C(\{u_iv_i\mid i\leq y\})\Big)$ are $r_2, \dots, r_{y-1}$. \cref{lemma:sparsity} implies that for each $r_i$, there are at most $s\big|(v_x, v_y]\big|$ edges of this color in the set $(v_x, v_y]\setminus \Big(B_0\cup C(\{u_iv_i\mid i\leq y\})\Big)$, and hence there are at most $sy\big|(v_x, v_y]\big|$ rogue edges in this set.
\end{proof}

We now record two additional lemmas about special sequences, and how changing the sets $S$ and $B$ affects the special sequences. The following lemma shows that extending the initial interval $S$ simply extends the corresponding special sequence.

\begin{lemma}\label{lemma:special sequence on different intervals}
Let $S, S'$ be initial intervals, such that $S'\subseteq S$, and let $B_0\subseteq U$ be a set of bad vertices. Define two special sequences, $v_1, \dots, v_f$ with respect to $B_0$ and $S$, and $v_1', \dots, v'_{f'}$ with respect to $B_0$ and $S'$. Then $v_1', \dots, v'_{f'}$ is a prefix of $v_1, \dots, v_f$, consisting of those $v_i$ which belong to $S'$. 
\end{lemma}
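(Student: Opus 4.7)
The plan is to proceed by induction on the step index $k$, showing that as long as $v_i\in S'$ for all $i\le k$, the two special sequences agree, i.e.\ $v_i=v_i'$ and (for $i\ge 2$) $u_i=u_i'$. The key observation that makes the induction trivial is that the temporary bad set $B^{(k)}=B_0\cup C(\{v_1,u_2v_2,\dots,u_kv_k\})$ depends only on the sequence produced so far, not on which of the two intervals $S,S'$ the algorithm is being run in. So if the two algorithms have made identical choices through step $k$, they enter step $k+1$ from the same state.

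For the inductive step, the $S$-algorithm chooses $v_{k+1}$ to be the first vertex of $S\setminus B^{(k)}$ satisfying the rogue-color condition in $U\setminus B^{(k)}$. Suppose first that $v_{k+1}\in S'$. Since $S'$ is an initial interval contained in $S$, any candidate vertex in $S'\setminus B^{(k)}$ strictly preceding $v_{k+1}$ in the median order would also lie in $S\setminus B^{(k)}$ and precede $v_{k+1}$, contradicting minimality. Hence $v_{k+1}$ is also the first vertex of $S'\setminus B^{(k)}$ satisfying the condition, so $v_{k+1}'=v_{k+1}$. The companion vertex $u_{k+1}$ is then defined purely in terms of $v_{k+1}$ and the global graph $U\setminus B^{(k+1)}$, so $u_{k+1}'=u_{k+1}$ as well.

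Now suppose $v_{k+1}\notin S'$. Because $S'$ is an initial interval, $v_{k+1}$ lies strictly after every vertex of $S'$ in the median order. By the minimality in the $S$-algorithm, no vertex of $S\setminus B^{(k)}$ preceding $v_{k+1}$ satisfies the rogue-color condition, and in particular no vertex of $S'\setminus B^{(k)}$ does. Therefore the $S'$-algorithm terminates after producing $v_1,\dots,v_k$, giving $f'=k$ and $v_i'=v_i$ for all $i\le f'$, as required.

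To complete the assertion that $\{v_1',\dots,v_{f'}'\}$ consists of \emph{all} the $v_i$ lying in $S'$, I will verify that the sequence $v_1,v_2,\dots,v_f$ is strictly increasing in the median order. Indeed, any vertex $w$ preceding $v_k$ in $S\setminus B^{(k-1)}$ failed the condition at step $k$; at step $k+1$ the set of admissible companion vertices shrinks (from $U\setminus B^{(k-1)}$ to $U\setminus B^{(k)}$) while the set of admissible rogue colors also shrinks (we now forbid $r_k$ in addition to $r_2,\dots,r_{k-1}$), so $w$ fails at step $k+1$ as well. Hence $v_{k+1}$ strictly follows $v_k$. Combined with the previous paragraph, this shows that once the sequence exits $S'$ it never returns, so the $v_i$ lying in $S'$ are exactly $v_1,\dots,v_{f'}$. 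The only even mildly subtle point is the monotonicity, which is the short argument just given; everything else is a direct unravelling of \cref{defn:special sequence}.
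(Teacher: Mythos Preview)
Your proof is correct and follows the same approach as the paper: both argue that the two algorithms make identical choices until the smaller interval $S'$ is exhausted, so the $S'$-sequence is a prefix of the $S$-sequence. Your version is more explicit than the paper's, in particular you spell out the monotonicity $v_k \prec v_{k+1}$ to justify that once the sequence leaves $S'$ it never returns, a point the paper takes for granted.
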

\begin{proof}
The only place in which the initial interval $S$ enters the definition of a special sequence is to determine the point at which the defining algorithm terminates. Therefore, if $S'\subseteq S$, the algorithm defining $\{v_i'\}$ stops earlier than the algorithm defining $\{v_i\}$, and the output of these two algorithms must be the same up to this point. In other words, $v_1', \dots, v'_{f'}$ is a prefix of $v_1, \dots, v_{f}$. Moreover, if the sequence $v_1, \dots, v_{f}$ contained any other vertices in $S'$, these vertices would also be added to the sequence $v_1', \dots, v'_{f'}$, which therefore means $S\cap\{v_1, \dots, v_f\}=\{v_1', \dots, v'_{f'}\}$. 
\end{proof}

The next lemma gives us control on how the special sequence evolves when we change the bad set $B$. This will be used many times in the proof of \cref{thm:main}, as we will need to iteratively enlarge the bad set as we add new vertices to our rainbow collection of paths.

\begin{lemma}\label{lemma:unchanged special sequence}
Let $S$ be an initial interval, let $v_1, \dots, v_f$ be a special sequence defined with respect to a bad set $B$ and $S$, and let $I$ be the set of vertices of $S$ coming after $v_p$, for some $p\leq f$.
Also, assume $B'$ is a larger set of bad vertices with the following three properties:
\begin{itemize}
    \item $B$ is a subset of $B'$;
    \item The vertices $v_1, u_2, v_2, \dots, u_p, v_p$ do not belong to $B'$;
    \item If $v_i\in I$, then $C(v_i), C(u_i)\subseteq B'$. 
\end{itemize}
Then the special sequence $v_1', \dots, v_{f'}'$ defined with respect to the set $B'$ satisfies $v_i'=v_i$ for all $i\leq p$. Furthermore, the sequence satisfies that for all $v_i\in I$, with $i\leq \min\{f', f\}$, the vertex $v_i'$ does not come before $v_i$. 
\end{lemma}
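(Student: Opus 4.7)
My plan is to prove the two claims by strong induction on $i$, after fixing a deterministic tiebreaking rule (for instance, earliest partner in the median order) so that the choice of $u_{k+1}$ is unambiguous.

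For the equality $v_i' = v_i$ when $i \leq p$, the base case $i=1$ is immediate: $v_1$ is the first vertex of $S \setminus B$, and since $v_1 \notin B' \supseteq B$, it is also the first vertex of $S \setminus B'$. For the inductive step with $k+1 \leq p$, assume $v_j' = v_j$ and $u_j' = u_j$ for all $j \leq k$, so that the conflict structure and the set of forbidden rogue colors at step $k+1$ coincide between the two algorithms. Any candidate in $S \setminus B'$ strictly earlier than $v_{k+1}$ also lies in $S \setminus B$ and must fail the $B$-algorithm test; such failures (a conflict, or a missing valid partner in $U \setminus B$) transfer to the $B'$-algorithm since $U \setminus B' \subseteq U \setminus B$. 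Conversely, $(v_{k+1}, u_{k+1})$ is still a valid pair under $B'$ because neither vertex lies in $B'$, forcing $v_{k+1}' = v_{k+1}$; the same comparison of minimum rogue-color indices in the two partner sets yields $u_{k+1}' = u_{k+1}$.

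For the inequality $v_i' \geq v_i$ when $p < i \leq \min\{f,f'\}$, I would induct on $i$ with the previous stage supplying the base. Given $v_j' \geq v_j$ for all $j \leq k$, suppose for contradiction that $v_{k+1}' < v_{k+1}$. Since $v_{k+1}' > v_k' \geq v_k$, the vertex $v_{k+1}'$ lies in $(v_k, v_{k+1}) \cap (S \setminus B)$ and, by minimality of $v_{k+1}$, must fail the $B$-algorithm test at step $k+1$. I would rule out conflict-based failure by examining the indices $j \leq k$ separately: for $j \leq p$ this follows from Stage~1 together with the $B'$-algorithm's own non-conflict property, and for $p < j \leq k$ the third hypothesis gives $C(v_j) \cup C(u_j) \subseteq B'$, while $v_{k+1}' \notin B'$. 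Hence the failure must be the absence of a valid partner in $U \setminus B$; plugging in the partner $u_{k+1}' \in U \setminus B' \subseteq U \setminus B$ provided by the $B'$-algorithm (which inherits the non-conflict property by the same reasoning) forces $r_{k+1}' \in \{r_j : p < j \leq k\}$, say $r_{k+1}' = r_{j^*}$, since for $j \leq p$ one has $r_j' = r_j$ and $r_{k+1}' \notin \{r_j' : j \leq k\}$.

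The main obstacle is to turn the configuration $r_{k+1}' = r_{j^*}$ into an outright contradiction, and this is the delicate step. My plan is to choose a minimal counterexample in $(i, j^*)$ and then analyze step $j^*$ of the $B'$-algorithm. Since $r_{j^*}' \neq r_{j^*}$, the $B'$-algorithm at step $j^*$ must have had some structural reason to avoid color $r_{j^*}$, yet $v_{k+1}'$ (which by induction lies at or after $v_{j^*}'$ in the median order) witnesses a partner $u_{k+1}' \in U \setminus B'$ of color $r_{j^*}$ with no forbidden conflicts. Tracing this carefully, possibly by strengthening the inductive invariant to track the relation between the multisets $\{r_j'\}$ and $\{r_j\}$, should contradict either the median-order minimality of $v_{j^*}'$ among valid vertices at step $j^*$ or the minimality of $r_{j^*}'$ as a color index, closing the induction.
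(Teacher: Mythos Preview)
Your first stage (the equality $v_i'=v_i$, $u_i'=u_i$ for $i\le p$) is correct and is essentially the paper's argument.

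In the second stage you are also right up through the conclusion $r_{k+1}'\in\{r_{p+1},\dots,r_k\}$: you correctly verify that the pair $(v_{k+1}',u_{k+1}')$ satisfies all the non-badness and non-conflict conditions at step $k+1$ of the $B$-algorithm, so only the color can fail.

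The gap is your final step. Your plan to ``analyze step $j^*$ of the $B'$-algorithm'' cannot yield a contradiction. At step $j^*$ the $B'$-algorithm first minimizes the vertex and only then, among partners of that chosen vertex $v_{j^*}'$, minimizes the color index. Since $v_{k+1}'$ lies strictly after $v_{j^*}'$ in the median order, the existence of the valid pair $(v_{k+1}',u_{k+1}')$ of color $r_{j^*}$ says nothing about either minimization at step $j^*$; there is no reason $v_{j^*}'$ itself should have any partner of color $r_{j^*}$. Choosing a minimal $(i,j^*)$ does not help with this.

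What is missing is a pigeonhole step that shifts attention away from $v_{k+1}'$ to a well-chosen $v_j'$. The $k-p+1$ colors $r_{p+1}',\dots,r_{k+1}'$ are pairwise distinct and, by Stage~1, disjoint from $\{r_2,\dots,r_p\}$; since $\lvert\{r_{p+1},\dots,r_k\}\rvert=k-p$, some $r_j'$ with $p<j\le k+1$ avoids all of $r_2,\dots,r_k$. Now work with \emph{this} $j$. The key preliminary observation (and this is where the third hypothesis on $B'$ is really used, absorbing $C(u_i)\cup C(v_i)$ for all $i>p$ into $B'$) is that $B^{(m)}\subseteq B'^{(p)}\subseteq B'^{(j-1)}$ for every $m\le f$, whence $v_j',u_j'\notin B^{(m)}$. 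Since $v_j'$ lies in $(v_p,v_{k+1})$, pick $m$ with $p\le m\le k$ and $v_m<v_j'<v_{m+1}$; then $(v_j',u_j')$ is a valid pair at step $m+1$ of the $B$-algorithm with rogue color $r_j'\notin\{r_2,\dots,r_m\}$, contradicting the minimality of $v_{m+1}$. This is exactly the ``multiset of colors'' bookkeeping you gesture at, but your proposal never arrives at the pigeonhole observation that makes it work.
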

\begin{proof}
Let us begin by proving that $v_i=v_i'$ for all $1\leq i\leq p$ and $u_i=u_i'$ for all $2\leq i\leq p$, by induction on $i$. For $i=1$, we know that $v_1$ is the first vertex outside $B$ in the median ordering. Since $v_1\notin B'$ and $B\subseteq B'$, $v_1$ must be the first vertex outside $B'$ in the median ordering too, meaning that $v_1'=v_1$.

For $1\leq i\leq p-1$, the vertex $v_{i+1}$ is the first vertex outside the temporary bad set $B^{(i)}=B\cup C(\{v_1, u_2v_2, \dots, u_iv_i\})$ which is incident to a rogue edge of color $\neq r_2, \dots, r_i$ in $U\setminus B^{(i)}$, and $v_{i+1}'$ is the first vertex outside the set $B'^{(i)}=B'\cup C(\{v_1', u_2'v_2', \dots, u_i'v_i'\})$ incident to a rogue edge of color $\neq r_2', \dots, r_i'$ in $U\setminus B'^{(i)}$. 
By the induction hypothesis, we know that $v_1=v_1', u_2=u_2', v_2=v_2', \dots, v_i=v_i'$, and so we also have the equality of the forbidden rogue colors $r_2=r_2', \dots, r_i=r_i'$.  
We also have that $C(\{v_1', u_2'v_2', \dots, u_i'v_i'\})=C(\{v_1, u_2v_2, \dots, u_iv_i\})$ and therefore $B^{(i)}\subseteq B'^{(i)}$ (since $B\subseteq B'$). Moreover, this also shows $u_{i+1}, v_{i+1}\notin B'^{(i)}$, since $u_{i+1}, v_{i+1}\notin B', C(\{v_1, u_2v_2, \dots, u_iv_i\})$. Hence, $v_{i+1}$ is indeed the first vertex outside the set $B'^{(i)}$ which is incident to a rogue edge of color $\neq r_2', \dots, r_i'$ in $U\setminus B'^{(i)}$, which shows that $v_{i+1}=v_{i+1}'$. Since $u_{i+1}\notin B'^{(i)}$ also, it follows that $u_{i+1}=u_{i+1}'$, which completes the induction and shows the first part of the statement.

Now, we focus on showing the second part of the statement. Before we begin, observe that $B^{(i)}\subseteq B'^{(p)}$ for all $i\leq f$. By unpacking the definition, this is equivalent to showing that $B\cup C(\{v_1, u_2v_2, \dots, u_iv_i\})\subseteq B'\cup C(\{v_1', \dots, v_{p}'u_p'\})$. By assumption, the set $B$ and the sets $C(\{u_jv_j\}$ for all $j\geq p+1$ are subsets of $B'$. Also, $C(\{u_jv_j\})\subseteq C(\{v_1', \dots, u_p'v_p'\}$ for $j\leq p$, since $u_jv_j=u_j'v_j'$, which suffices to show the observation. 

Suppose now for a contradiction that $v_{k+1}'$ comes before $v_{k+1}\in I$ for some $p<k\leq \min\{f, f'\}$, and let us consider the minimal such $k$. Among the colors $r_{p+1}', \dots, r_{k+1}'$ there are $k-p+1$ colors and so there exists one which does not appear in the set $\{r_{p+1}, \dots, r_{k}\}$. Let this be the color $r_j'$. Since $v_j'\notin {B'}^{(j-1)}$ implies $v_j'\notin B^{(f)}$, we have that $v_j' \notin \{v_{1}, \dots, v_f\}$.

Consider the vertex $v_j'$ now: why is it not a part of the special sequence $v_1, \dots, v_f$? 
Suppose that $v_j'$ lies between $v_m$ and $v_{m+1}$ (for some $m \geq p$). Then $v_j'$ is incident to an edge of a rogue color $r_j'$, which is distinct from $r_1, \dots, r_m$. The only reason why $v_j'$ would not be a part of the original sequence is then that either $v_j'$ or $u_j'$ is contained in the set $B^{(m)}$. But observe that $B^{(m)}\subseteq B'^{(p)}\subseteq B'^{(j-1)}$, as we noted above, and we have $u_j', v_j'\notin B'^{(j-1)}$. This is a contradiction, which completes the proof of the lemma.
\end{proof}

The special vertices $v_1, \dots, v_f$ played an important role in the proof of Loh and Sudakov, who showed the following lemma.

\begin{lemma}[{\cite[Lemma 2.5]{LS09}}]\label{lemma:long I_k}
Let $B$ be the set of vertices in $U$ whose rogue degree is at least $4st$, and let $\{v_i\}_{i=1}^f$ be the sequence of special vertices defined with respect to it and the interval $S$ containing all vertices. If there exists an index $k\in [f]$ for which the interval $[v_{k}, v_{2k}]\setminus B$ contains at least $176st$ vertices, then there exists a rainbow collection of paths of total length at least $t$.
\end{lemma}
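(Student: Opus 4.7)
The plan is to augment the special rainbow collection $\cP_0 = \{v_1, u_2v_2, \ldots, u_fv_f\}$ into a rainbow collection of total length at least $t$, using the large reservoir of vertices provided by $J := [v_k, v_{2k}] \setminus B$, and then invoke \cref{lemma:building from directed paths}. First I would handle the trivial case: $\cP_0$ already has total length $2f-1$, so if $f \geq (t+1)/2$ we are done. Hence I may assume $k \leq f \leq t/2$, which in particular bounds the number of vertices available in $\cP_0$ by roughly $t$.

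Next, I would focus on the $k$ paths $P_{k+1} = u_{k+1}v_{k+1}, \ldots, P_{2k} = u_{2k}v_{2k}$ of $\cP_0$, and extend each of them greedily by appending a forward directed non-rogue path starting at $v_i$ and contained in $J$. I would process the paths one at a time. At every step I maintain a ``forbidden'' set consisting of (i) vertices already used in some path of the collection, (ii) vertices in $C(w)$ for any used vertex $w$, and (iii) the vertices of the base rainbow collection $\cP_0$. As long as the total length constructed is below $t$, the forbidden set has size $O(st)$, which is much smaller than $|J| \geq 176st$.

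To show that each greedy extension step succeeds, I would argue that the current endpoint $x \in J$ has large forward directed non-rogue degree into $J$ outside the forbidden set. The rogue degree of $x$ is at most $4st$ because $x \notin B$, and $x$ has at most $2s-1$ non-edges inside its own $U_\ell$; together these leave a pool of at least $|J| - O(st)$ directed non-rogue edges incident to $x$ inside $J$. Using the median-order property (for any vertex $v$, reversing the direction of any proposed swap cannot increase the count of forward directed edges), a constant fraction of those directed non-rogue neighbours lie forward of $x$ in the median order. Subtracting the $O(st)$ forbidden vertices still leaves a legal forward neighbour in $J$, so the extension continues. Once total length reaches $t$, the resulting rainbow collection satisfies the hypotheses of \cref{lemma:building from directed paths}, yielding the desired rainbow path.

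The main obstacle will be making the median-order estimate rigorous inside the sub-interval $J$: the median order is optimal on $U$, not on $J$, so I must translate the global optimality into a local forward-degree lower bound. The standard swap argument (moving $x$ one step in the order can only preserve or decrease the forward-edge count) should give, for each $x \in J$, a clean bookkeeping inequality relating forward and backward directed neighbours of $x$. Once that inequality is in hand, the rest of the proof is a straightforward accounting of how many vertices of $J$ remain available after conflicts and revisits are excluded, and the size $|J| \geq 176st$ has been designed precisely to absorb all these $O(st)$-sized losses with room to spare.
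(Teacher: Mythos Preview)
Your plan has a real gap at the median-order step. The swap argument you invoke does \emph{not} yield the global statement ``a constant fraction of the directed non-rogue neighbours of $x$ in $J$ lie forward of $x$.'' What it gives is local: for any window of $r$ consecutive vertices \emph{to the right} of $x$ in the median order, at least half of the directed edges between $x$ and that window point forward. If $x$ sits near the right boundary of $J$, that window is tiny (or empty), and $x$ may have no forward out-neighbour inside $J$ at all. Concretely, nothing in the special-sequence construction prevents almost all of the $176st$ vertices of $J$ from lying in $[v_k,v_{k+1})$, i.e.\ strictly to the left of every starting point $v_{k+1},\dots,v_{2k}$ you chose. In that scenario your greedy forward extensions never touch the bulk of $J$, and the total length you produce is only $O(k)$, not $t$.

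The Loh--Sudakov proof (whose mechanics are reproduced in \cref{lemma:tournaments}) avoids this obstruction by a two-stage argument, and the essential second stage is missing from your outline. First one uses the median-order window property only to push the endpoints of $k/2$ paths forward into a short strip just before $v_k$; this is a bounded manoeuvre, not the source of the length. The real gain comes from a \emph{tournament} step: one randomly samples one vertex from each of many parts $U_j$ inside the reservoir, deletes endpoints of surviving rogue edges, and observes that what remains is a tournament, hence contains a Hamiltonian directed path. The crucial point is that a rainbow collection only requires the path to be \emph{directed} (every edge leaving a distinct $U_j$ carries a distinct colour $c_j$), not forward in the median order --- so the Hamiltonian path may weave backwards freely while still being rainbow. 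Splitting the reservoir into $k/2$ such tournaments and attaching one to each endpoint yields total added length $\Omega(|J|/s)\ge \Omega(t)$. Without this tournament idea (or something equivalent that lets you use vertices lying \emph{behind} the current endpoint), the argument does not close.
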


In the case $k\geq f/2$, in the above lemma $[v_k, v_{2k}]$ just stands for the set of vertices after $v_k$ in the median ordering. From now on, in case $j>f$, the interval $[v_i, v_j]$ will stand for the set of vertices coming after $v_i$ in the median ordering, and similarly for $(v_i,v_j)$.

With this lemma in mind, it is clear how Loh--Sudakov obtain the bound $f(\cS, P_t)\leq O(st\log t)$, since if $|U\setminus B|\geq Cst\log t$ for some large constant $C$, there will exist an index $k$ for which $|[v_k, v_{2k}]\setminus B|\geq |U\setminus B|/\log t\geq 176 st$. 

\subsection{Amortized basic lemma}

We begin by generalizing \cref{lemma:long I_k} very slightly, to obtain an amortized version of it, which will be the basic building block in our inductive scheme. Let us recall a piece of notation: for $X\subseteq U$, $\Delta_R(X)$ stands for the maximum rogue degree of a vertex in the subgraph induced on $X$. 

\begin{lemma}\label{lemma:tournaments}
Let $v_1, \dots, v_f$ be a sequence of special vertices, defined with respect to a set $B_0$ on some initial interval of $U$, and let $B=B_0\cup C(\{v_1, u_2v_2, \dots, u_fv_f\})$. Let $\ell\leq f$ be a positive integer and let $I$ be the interval $I=(v_\ell, x]$, for some\footnote{Recalling the notation introduced above, we have that $(v_\ell, v_{8\ell}]$ comprises all vertices coming after $v_\ell$ in case $v_{8\ell}$ is not defined. Thus, if $8\ell>f$, we are only assuming that $x$ comes after $v_\ell$.} $x\in (v_\ell, v_{8\ell}]$.

Let $\cP$ be a rainbow collection of paths all of whose vertices lie in $[v_1, v_\ell]$. Further, suppose there exist $\ell/2$ paths $P_1, \dots, P_{\ell/2}$ in $\cP$ whose endpoints $w_1, \dots, w_{\ell/2}$ (indexed according to the median ordering) have the property that $\Delta_R\big([w_1, x]\big)\leq |I|/10$. Furthermore, assume that 
\begin{equation}\label{eq:basic lemma condition}
|I|> 40\max\{|B\cap [w_1, x]|, \Delta_R(I\setminus B), s\}.
\end{equation}
Then there exists a collection of rainbow paths $\cP_1$ such that 
\begin{itemize}
    \item[(i)] every path in $\cP_1$ is an extension of a path from $\cP$, 
    \item[(ii)] for every every path $P\in \cP_1$ we have $V(P)\subseteq [w_1, x)\setminus B$, and
    \item[(iii)] $\ell(\cP_1)\geq \ell(\cP)+\Omega(|I|/s)$.
\end{itemize}
\end{lemma}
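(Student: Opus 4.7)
The plan is to greedily extend the paths $P_1, \ldots, P_{\ell/2}$ forward into $I$ using non-rogue directed edges, in an amortized refinement of the argument behind \cref{lemma:long I_k}. The main tool is the median-ordering property on the non-rogue directed subgraph of $G[U]$: for any $v \in U$ and any $y$ later than $v$ in the median order, at least roughly half of $[v,y]$ is connected to $v$ by a forward non-rogue directed edge, up to subtractions for rogue edges at $v$ and for vertices of the same $U_j$ as $v$. Applied with $v = w_i$ and $y = x$, and combined with the hypotheses $\Delta_R([w_1,x]) \leq |I|/10$, $|B \cap [w_1, x]| < |I|/40$, and $s < |I|/40$, this gives each endpoint $w_i$ at least $\Omega(|I|)$ forward non-rogue directed neighbors inside $I \setminus B$.

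The extension procedure works iteratively. I would process the endpoints $w_1, \ldots, w_{\ell/2}$ (in their median order) and, for each $i$, append to $P_i$ a forward non-rogue directed neighbor of its current endpoint, chosen from $I \setminus (B \cup \{x\})$ and avoiding any $U_j$ already used in the current rainbow collection. Since each appended edge is non-rogue and enters a new $U_j$, the rainbow property of the collection is automatically preserved, so that conditions (i)--(iii) in the definition of a rainbow collection hold throughout. One continues appending to a given path as long as a suitable forward neighbor exists, then moves on to the next path.

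For the quantitative bound $\Omega(|I|/s)$, I would track the vertices of $I$ that become ``blocked'' as extensions accumulate: each extension consumes one vertex of $I$ and forbids the at most $|U_j|-1 < 2s$ other vertices of $U_j$ from future use. Together with the $|I|/40$ vertices of $B$ and the $|I|/10$ rogue neighbors per vertex, these subtractions remain comfortably below the $\Omega(|I|)$ median lower bound as long as $k \lesssim |I|/s$, where $k$ is the number of extensions performed so far. Hence the greedy process must produce $\Omega(|I|/s)$ total extensions before stalling.

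The main obstacle is the \emph{shrinking interval} issue: after a path is extended, its current endpoint has advanced in the median order, so the suffix from which further forward neighbors can be drawn has shrunk. If one path is extended too aggressively, its suffix may drop below the threshold at which the median bound is still useful. A safer strategy is to balance extensions across paths (either by processing in rounds so that all current endpoints advance roughly in parallel, or by capping each path at $O(|I|/(\ell s))$ extensions), so that the total amortizes to $\Omega(|I|/s)$. The bookkeeping of used and blocked vertices, together with the repeated application of the median bound on shrinking intervals, is the technical crux of the argument.
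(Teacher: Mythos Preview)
Your greedy forward-extension plan has a genuine gap, and neither balancing across paths nor capping per-path extensions repairs it. The median-ordering bound guarantees many forward neighbors in the remaining suffix, but it gives no control over \emph{where} those neighbors lie. Concretely, the only way to guarantee even one forward neighbor among the next $r$ vertices is to take $r$ larger than the rogue degree of the current endpoint plus $2s$; since the hypotheses only give $\Delta_R([w_1,x]) \leq |I|/10$, each extension may be forced to advance the endpoint by $\Theta(|I|)$. (An adversary can realize this: make every vertex rogue-adjacent to its $|I|/20$ immediate successors and orient all longer edges forward; this is consistent with the median ordering.) Once the endpoint enters $I$, a path therefore reaches the vicinity of $x$ after $O(1)$ extensions, and the $\ell/2$ paths together yield only $O(\ell)$ new vertices. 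There is no hypothesis relating $\ell$ to $|I|/s$---in the intended application as the base case of \cref{prop:main induction} one has $\ell = t/a^3 \ll |I|/s$---so this falls far short of $\Omega(|I|/s)$.

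The paper's proof circumvents this with a two-step argument. Step~1 is close to your plan: short forward jumps of length at most $r = 3|I|/10$ bring the endpoints into the last $r$ vertices before $v_\ell$, and if this alone already consumes $\Omega(|I|/s)$ new vertices we are done. The key new ingredient is Step~2: one thins $I\setminus B$ down to roughly one vertex per part $U_j$ and randomly distributes the survivors into $\ell/2$ buckets $T_1,\dots,T_{\ell/2}$; after deleting from each $T_i$ the vertices with no forward edge from the relocated endpoint $w_i'$ and one endpoint of every internal rogue edge, what remains is a tournament $T_i''$, which therefore contains a Hamiltonian path extending $P_i$ all at once by $|T_i''|$ vertices. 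The expected rogue deletions are controlled not by the pointwise bound $\Delta_R$ but by the \emph{total} number of rogue edges in $I\setminus B$, which is $O(\ell s |I|)$ because $x \in (v_\ell, v_{8\ell}]$ forces at most $8\ell$ rogue colors there (cf.\ \cref{lemma:rogue edges special sequence}); a linearity-of-expectation computation over pairs then gives $\E\bigl[\sum_i |T_i''|\bigr] = \Omega(|I|/s)$. This tournament step---extending each path in one shot rather than by iterated single forward edges---is what converts $\ell/2$ endpoints into $\Omega(|I|/s)$ new vertices, and it is the idea missing from your outline.
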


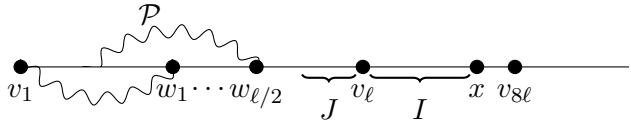
\begin{figure}[ht]
    \begin{center}
	\begin{tikzpicture}
            \draw[ black] (-5, 0) -- (3, 0);
            \node[vertex,label=below:$v_1$] (v1) at (-5,0) {};
            \node[vertex,label=below:$w_1$] (w2) at (-3,0) {};
            \node[,label=below:$\dots$] (d) at (-2.5,0) {};
            \node[vertex,label=below:$w_{\ell/2}$] (wl/2) at (-1.9,0) {};
            \node[,label=above:$\cP$] (P) at (-3.3,0.3) {};

            \begin{scope}[shift={(-1.55,0)}]
                \clip(-5,0) rectangle (2,1);
                \draw[decorate,decoration={snake}] (-90 : 1.5) arc (-15 : 190 : 1.5);
            \end{scope}
            \begin{scope}[shift={(-2.7,0.1)}]
                \clip(-5,0) rectangle (2,-0.75);
                \draw[decorate,decoration={snake}] (90 : 1.2) arc (15 : -180 : 1.3);
            \end{scope}
            
            \node[vertex,label=below:$v_\ell$] (vk) at (-0.5, 0) {};
            \node[vertex,label=below:$x$] (u) at (1,0) {};
            \node[vertex,label=below:$v_{8\ell}$] (v8k) at (1.5,0) {};
            \draw [ thick, decoration={ brace, mirror, raise=0.1cm}, decorate] (vk) -- (u) 
                node [pos=0.5, anchor=north, yshift=-0cm, label=below:$I$] {}; 
            \draw [ thick, decoration={ brace, mirror, raise=0.1cm}, decorate] (-1.3, 0) -- (vk) 
                node [pos=0.5, anchor=north, yshift=-0cm, label=below:$J$] {};
        \end{tikzpicture}
    \end{center}
    \caption{Illustration of the setup of Lemma~\ref{lemma:tournaments}.}
    \label{fig1}
\end{figure}
\begin{proof}
The strategy of the proof will be the following: if we extend the $\ell/2$ paths $P_1, \dots, P_{\ell/2}$ to longer directed paths, while maintaining the property that no two vertices of the extended collection conflict, then the extended collection will still be rainbow. Hence, we do this extension procedure in two steps. In the first step, we show that the paths $P_1, \dots, P_{\ell/2}$ can be extended using the vertices of the interval $[w_1, v_{\ell})\setminus B$ such that they end in the $3|I|/10$ vertices preceding $v_\ell$. Then, in the second step we construct a small number of tournaments using a constant fraction of the vertices of $I$ and use them to extend the paths $P_i$. 

\paragraph{Step 1:} Let $r=3|I|/10$, and let $J$ be the set of vertices not in $B$ among the $r$ vertices preceding $v_\ell$. We will show that $P_1, \dots, P_{\ell/2}$ can be extended to paths $P_1', \dots, P_{\ell/2}'$ which end in $J$ and such that the extended collection of paths $\cP'$ is still rainbow.

The initial observation is that for every vertex $v\in [w_1, v_\ell)\setminus B$, there exists a forward edge $v\to v'$ of length at most $r$ to a vertex $v'\notin B$ (the length of the edge $v\to v'$ is equal to one plus the number of vertices between $v$ and $v'$). To show this, note that in the median ordering, every vertex $v$ has at least as many forward as backward edges to the next $r$ vertices; if this was not the case, moving $v$ back $r$ places in the median ordering would increase the number of forward edges in the median ordering. 
Now, consider the edges between $v$ and the $r$ vertices following it in the median ordering. At most $\Delta_R([w_1,x)) \leq \frac{|I|}{10}$ of these edges are rogue. Also, at most $2s$ of them connect $v$ to a vertex in $C(\{v\})$ (i.e., a vertex from the same set $U_i$ as $v$). All other $r - \frac{|I|}{10} - 2s \geq \frac{3|I|}{20}$ edges are directed, and among them, at least half, so at least $\frac{3|I|}{40}$, are directed forwards.  
As long as the number of bad vertices in $[w_1, x]$ is less than $3|I|/40$ (and it is by the assumption that $|B\cap [w_1, x]|\leq |I|/40$), one of these forward edges does not lead to a bad vertex. 
Hence, $v$ has a forward edge of length at most $r$ to a non-bad vertex.

With this tool in hand, we are able to extend the paths $P_1, \dots, P_{\ell/2}$ so that they end in $J$ using the following algorithm. As long as at least one of the paths $P_i$ ends at a vertex $v\notin J$, there is a forward edge from $v$ to some $v'\notin B$ among the next $r = 3|I|/10$ vertices. Then, we extend the path $P_i$ using the edge $v\to v'$ and add to the bad set all vertices conflicting with $v'$. The algorithm terminates either when at least $|I|/40$ new vertices have been added to the bad set or when the endpoints of all paths $P_1, \dots, P_{\ell/2}$ lie in $J$. Note that as long as the bad set contains less than $3|I|/40$ vertices in $[w_1, x)$, the forward edge $v\to v'$ can be found and therefore the algorithm will terminate in one of the two described ways. 

Let us denote by $B'$ the set of bad vertices when the algorithm terminates, and by $\cP'$ the extended collection of paths. If $|B'\setminus B|\geq |I|/40$, we have $\ell(\cP')\geq \ell(\cP)+\frac{|I|}{80 s}$, i.e.\ at least $\frac{|I|}{80 s}$ new vertices have been added to the collection of paths $\cP$. This is because adding a vertex to the collection $\cP'$ adds at most $2s$ vertices to the bad set and there is no other reason for which a vertex is declared bad in this algorithm. In this case, the collection of paths $\cP'$ already suffices to complete the proof, since we have ensured that no two vertices of the collection conflict.

If we have $|B'\setminus B|\leq |I|/40$, then the algorithm terminated when the paths $P_1, \dots, P_{\ell/2}$ have been successfully extended so that they end in the interval $J$.
This is because initially $|B| \leq \frac{|I|}{40}$ (by \eqref{eq:basic lemma condition}), and so the number of bad vertices does not exceed $\frac{|I|}{20}$ at any point of the algorithm.
Let us denote the newly obtained endpoints by $w_1', \dots, w'_{\ell/2}$. In this case, we have performed the first step and we can pass onto the second step of the proof.

\paragraph{Step 2:} In this step, we will extend the paths ending at $w_1', \dots, w_{\ell/2}'$ using the vertices of $I$. To simplify the calculations, let us form the sets $U_j'$ by taking unions of several sets $U_j\cap I\setminus B'$ such that $2s\leq |U_j'|< 6s$.

First, we discard each set $U_j'$ with probability $\frac{399}{400}$. Then, we form the sets $T_1, \dots, T_{\ell/2}$ in the following way. If a set $U_j'$ was not discarded, we choose a uniformly random vertex from $U_j'$ and put it into one of the sets $T_i$, uniformly at random. Then, we form the sets $T_i'$ by deleting from $T_i$ all vertices $v$ such that there is no directed edge from $w'_i$ to $v$. Since only the vertices of $T_i'$ will be used to extend the paths $P_1, \dots, P_{\ell/2}$, the last two steps ensure that all vertices of the extended collection come from different sets $U_i$.
Finally, we form the set $T_i''$ by deleting one vertex from each rogue edge with both endpoints in $T_i'$. 

By construction, every edge between two vertices of $T_i''$ is directed (i.e.\ $T_i''$ is a tournament), and hence it contains a Hamiltonian path. Furthermore, $w_i'$ has a directed forward edge to the start of this Hamiltonian path and therefore the path $P_i$ can be extended by at least $|T_i''|$ vertices. The conclusion is that we can extend the paths of $\cP'$ such that their total length is at least $\ell(\cP')+\sum_{i=1}^{\ell/2}|T_i''|$. Hence, to prove the lemma it suffices to show that on average, $\sum_{i=1}^{\ell/2}|T_i''|\geq \Omega(|I|/s)$.

Before we estimate $\E\big[|T_i''|\big]$, we need to estimate $\E\big[|T_i'|\big]$ and $\E\big[|T_i|\big]$. By the same argument as in the beginning of this proof, the number of forward edges from $w_i'$ to a vertex in $[w_i', x)$ is at least \[\frac{1}{2}\Big(|I|-\Delta_R\big([w_1, x)\big) - 2s\Big)\geq \frac{1}{2} \Big(|I|-\frac{|I|}{10}-\frac{|I|}{20}\Big)=\frac{17}{40}|I|.\]
Out of these edges, at most $\frac{3}{10}|I|$ point to the vertices before $v_\ell$ and at most $|B'\cap I|\leq |I|/20$ point to bad vertices. Hence, we conclude that $w_i'$ points to at least $3|I|/40$ vertices in the union of the sets $U_j'$.

Consider any fixed vertex $v$ in this union, and let $j$ such that $v \in U'_j$. The probability that $v$ is assigned to $T_i$ is $\Pb[v\in T_i]=\frac{1}{400 \cdot \ell/2\cdot |U_j'|}$. Hence, this probability is between $\frac{1}{1200  \ell s}$ and 
$\frac{1}{400  \ell s}$. Thus,  
\[\E\big[|T_i'|\big]\geq \frac{3|I|}{40} \times \frac{1}{1200\cdot \ell s}=\frac{|I|}{16\cdot 10^3 \cdot \ell s}.\]
To bound $\E\big[|T_i'|-|T_i''|\big]$, we observe that the number of deleted vertices from $T_i'$ is always at most the number of rogue edges with both ends in $T_i'$. 
Since there are at most $8\ell$ rogue colors appearing in $I\setminus B'$, \cref{lemma:sparsity} implies there are at most $8\ell s |I\setminus B'|$ rogue edges in $I\setminus B'$. 
Furthermore, since the events $v \in T_i'$ and $v' \in T_i'$ are independent for distinct $v,v'$, the probability that a rogue edge $vv'$ makes it into $T_i'$ is at most \[\Pb[v, v'\in T_i']\leq \left(\frac{1}{400 \ell s }\right)^2.\]
Thus, the expected number of rogue edges in $T_i'$ can be upper-bounded as 
\[\E\Big[|T_i'|-|T_i''|\Big]\leq 8\ell s |I|\left(\frac{1}{400 \ell s }\right)^2\leq \frac{|I|}{2\cdot 10^4 \ell s}.\]
Thus, summing over the $\ell/2$ sets $T_i''$ we see that 
\[\E\bigg[\sum_{i=1}^{\ell/2}|T_i''|\bigg]= \sum_{i=1}^{\ell/2}\E\big[|T_i'|\big] - \sum_{i=1}^{\ell/2}\E\big[|T_i'|-|T_i''|\big] \geq \frac{\ell}{2}\Big(\frac{|I|}{16\cdot 10^3 \ell s}-\frac{|I|}{2\cdot 10^4 \ell s}\Big)=\frac{|I|}{16\cdot 10^4 s}.\]
The conclusion is that $\ell(\cP_1)\geq \ell(\cP)+\frac{|I|}{16\cdot 10^4 s}$, for an appropriate choice of the sets $T_i''$, which suffices to complete the proof of the lemma.
\end{proof}

\subsection{Main induction}

The following proposition is the main technical part of the argument. It is a ``higher-order'' version of \cref{lemma:tournaments}, showing that if an interval $I$ is sufficiently long, then any rainbow collection of paths that end before it can be extended to a longer rainbow collection, where the added length is of order $\Omega(\ab I/s)$. Note that if we could prove such a statement with no assumptions on $I$, we could prove that $f(\cS, P_t)=O(st)$, by setting $I$ to be all of $U$. 

Of course, we are not able to prove such a statement, and there are assumptions on $I$ in \cref{prop:main induction}. Roughly speaking, the statement of \cref{prop:main induction} with some parameter $k$ is sufficient to prove the bound $f(\cS, P_t) = O(st \alpha_{k+1}(t))$. However, the statement of \cref{prop:main induction} includes a number of extra assumptions, which are designed to maintain an inductive approach: we prove the statement by induction on $k$, and the base case $k=1$ corresponds to \cref{lemma:tournaments}. Thus, for example, a single step of the inductive argument allows us to improve the Loh--Sudakov bound $f(\cS, P_t) = O(st\log t)$ to $f(\cS, P_t) = O(st \log_\star t)$. 

\begin{proposition}\label{prop:main induction}
Let $k$ be a fixed integer, let $a$ be a sufficiently large integer with respect to $k$, and let $t \geq a$.
\begin{itemize}
    \item Let $B_0$ be a set of bad vertices and let $v_1, \dots, v_{f}$ be the special sequence for $B_0$, on some initial interval $S$. Suppose $\cP$ is a rainbow collection of paths with start points $v_1, u_2, \dots, u_{t/a^3}$, such that all vertices except maybe the starts of paths of $\cP$ (i.e. the $u_i$'s and the $v_i$'s) lie in $[v_1, v_{t/a^3}]\setminus C(\{u_i, v_i\mid 1\leq i\leq f\})$.
    \item Let $x$ be a vertex in $(v_{t/a^3}, v_{t/\alpha_{k}(a)^3}]$ and let $I= (v_{t/a^3}, x]\subseteq S$. Let $B=B_0\cup C(\cP)\cup C(\{u_iv_i\mid v_i\in I\})$.
\end{itemize}
Suppose that $t/2a^3$ paths of the collection $\cP$ have endpoints $w_1, \dots, w_{t/2a^3}$, arranged in the median ordering. Assume that $\Delta_R\big([w_1, x]\setminus B\big)\leq \frac{|I|}{10^k}$ and that
\begin{equation}\label{eq:main proposition assumption}
|I|\geq C_k\big|B\cap [w_1, x]\big|+C_k \alpha_k(a) \Delta_R(I\setminus B)+C_k\frac{st}{\alpha_k(a)},
\end{equation}
with $C_k=40^k$.

Then it is possible to extend the paths of $\cP$ which end at $w_i$, and possibly construct some new paths using vertices of $I$, to form a rainbow collection of paths $\cP'$ which uses only vertices before $x$ and not from $B$ and satisfies $\ell(\cP')\geq \ell(\cP)+\Omega_k(\frac{|I|}{s})$.
\end{proposition}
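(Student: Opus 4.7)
The plan is to prove the proposition by induction on $k$. For the base case $k=1$, note that $\alpha_1(a) = \lceil a/2\rceil$, so $I \subseteq (v_{t/a^3}, v_{8t/a^3}]$. Setting $\ell = t/a^3$, we are precisely in the setup of Lemma~\ref{lemma:tournaments}, and the assumption~\eqref{eq:main proposition assumption} with $k=1$ together with $\Delta_R([w_1,x]\setminus B) \leq |I|/10$ imply the three conditions of that lemma (using $t \geq a$ to convert the $C_1 st/\alpha_1(a)$ term into the needed bound $|I| > 40s$, and noting $\alpha_1(a)\geq 1$ for the $\Delta_R$ term). Lemma~\ref{lemma:tournaments} then yields the desired extension of length $\Omega(|I|/s)$.

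For the inductive step, define the decreasing sequence $b_0 = a$, $b_j = \alpha_{k-1}(b_{j-1})$, so that $b_{\alpha_k(a)} = 1$. Since $x \in (v_{t/a^3}, v_{t/\alpha_k(a)^3}]$, there is some $m \leq \alpha_k(a)$ with $x \in (v_{t/b_{m-1}^3}, v_{t/b_m^3}]$. Partition $I$ into $m$ consecutive sub-intervals $I_j = (v_{t/b_{j-1}^3}, v_{t/b_j^3}]$ for $j < m$, and $I_m = (v_{t/b_{m-1}^3}, x]$. Each sub-interval is positioned at exactly the scale that calls for the induction hypothesis with parameter $b_{j-1}$, since $\alpha_{k-1}(b_{j-1}) = b_j$. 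I would then process the sub-intervals one at a time: at step $j$, apply the inductive statement with parameter $b_{j-1}$ on the interval $I_j$, extending the current collection $\cP^{(j-1)}$ by $\Omega_{k-1}(|I_j|/s)$. Summing over $j = 1, \dots, m$ gives a total extension of $\sum_j \Omega_{k-1}(|I_j|/s) = \Omega_{k-1}(|I|/s) = \Omega_k(|I|/s)$, as required.

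The main obstacle is verifying that the hypotheses of the level-$(k-1)$ statement apply at each step. We must maintain the structural invariant that $\cP^{(j-1)}$ has $t/b_{j-1}^3$ paths with starts $v_1, u_2, \dots, u_{t/b_{j-1}^3}$; this requires either strengthening the conclusion of the proposition to guarantee that each call of $P(k-1)$ automatically introduces the new start-edges $u_iv_i$ for $v_i \in I_j$, or inserting them by hand between iterations and invoking Lemma~\ref{lemma:unchanged special sequence} to confirm that the special sequence is unaltered by the ensuing growth of the bad set. Moreover, the length hypothesis~\eqref{eq:main proposition assumption} at level $k$ must imply its level-$(k-1)$ analogue on each $I_j$: the factor $\alpha_k(a) \geq b_j$ multiplying $\Delta_R(I\setminus B)$ and the term $st/\alpha_k(a)$ in the level-$k$ condition are designed so that the corresponding level-$(k-1)$ terms $b_j \Delta_R(I_j\setminus B)$ and $st/b_j$ on each $I_j$ both fit under the allotted budget, with the geometric constant $C_k = 40^k$ leaving a factor of $40$ of slack per level to absorb the accumulating bad set and to upgrade the rogue-degree bound from $|I|/10^k$ on $[w_1,x]$ to $|I_j|/10^{k-1}$ on each $[w_1^{(j)}, x_j]$.
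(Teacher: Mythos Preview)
Your overall architecture --- induct on $k$, partition $I$ at the scales $b_j=\alpha_{k-1}^{(j)}(a)$, and invoke the level-$(k-1)$ statement on the sub-intervals --- matches the paper's. But the step ``apply the inductive statement to every $I_j$ and sum'' does not go through, and the paper's proof is built precisely around this failure.

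The concrete problems are these. First, your claim $\alpha_k(a)\geq b_j$ is false for all but the last few $j$: for instance $b_1=\alpha_{k-1}(a)$, which for $k\geq 3$ is enormously larger than $\alpha_k(a)$. So the level-$k$ term $C_k\alpha_k(a)\Delta_R(I\setminus B)$ does \emph{not} dominate the level-$(k-1)$ term $C_{k-1}b_j\Delta_R(I_j\setminus B^{(j)})$; to control the latter one must explicitly remove the high-rogue-degree vertices from $I_j$ (the paper's cleaning set $H$) and appeal to Lemma~\ref{lemma:rogue edges special sequence}. Second, the upgrade $\Delta_R\leq |I|/10^k \Rightarrow \Delta_R\leq |I_j|/10^{k-1}$ requires $|I_j|\geq |I|/10$, which holds for at most a constant number of $I_j$; on short sub-intervals the endpoints $w_1^{(j)}$ must instead be taken freshly from $I_{j-1}$ so that the relevant window lies entirely in $I$, where the sharper bound $\Delta_R(I\setminus B)\leq |I|/(C_k\alpha_k(a))$ from~\eqref{eq:main proposition assumption} applies. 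Third, and most importantly, even granting all hypotheses initially, the bad set grows by $\Theta(|I_j|)$ after processing $I_j$, so by the time you reach $I_m$ the accumulated bad set can be of order $|I|$ --- far too large for the level-$(k-1)$ hypothesis on $I_m$, and a single factor of $40$ cannot absorb $\alpha_k(a)$ rounds of accumulation.

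The paper resolves this with a genuine two-case split. If some $J_\ell$ has $|J_\ell|\geq |I|/10$, it applies induction once to that interval alone (after cleaning). If all sub-intervals are short, it processes them in reverse order and allows the hypothesis to \emph{fail} on any $J_{2\ell}$ that is too short; the key amortized argument is that if the hypothesis fails on many $J_{2\ell}$, the dominant reason is that $C(\cQ_{2\ell'})$ for $\ell'>\ell$ is large, which already certifies that the previously built collections are long enough. This amortization is the missing idea in your proposal.
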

\begin{proof}
We prove this statement by induction on $k$. The case $k=1$ follows by applying Lemma~\ref{lemma:tournaments} with $\ell=t/a^3$.

Hence, let $k\geq 2$ and let $r$ be the smallest integer for which $v_{t/\alpha_{k-1}^{(r)}(a)^3}$ is after $x$ in the median ordering. We will partition $I$ into intervals $J_1, \dots, J_{r}$ such that $J_\ell=(v_{t/\alpha_{k-1}^{(\ell-1)}(a)^3}, v_{t/\alpha_{k-1}^{(\ell)}(a)^3}]$ for $\ell=1, \dots, r-1$ and $J_r=(v_{t/\alpha_{k-1}^{(r-1)}(a)^3}, x]$. 

Under these definitions, we claim that $r< \alpha_k(a)$. Observe that by definition of $\alpha_k(a)$, we have $\alpha_{k-1}^{(\alpha_k(a))}(a)=1$. Since $v_{t/\alpha_{k-1}^{(r-1)}(a)^3}$ comes before $x$ and $x$ comes before $v_{t/\alpha_k(a)^3}$, we must have $\alpha_{k-1}^{(r-1)}(a)\geq \alpha_k(a)$, and so $\alpha_{k-1}^{(r)}(a)\geq \alpha_{k-1}(\alpha_k(a))>1$, where the last inequality holds as long as $a$ is large enough compared to $k$. The conclusion is that $\alpha_{k-1}^{(r)}(a)>1$ and so $r< \alpha_k(a)$. 
In other words, $I$ is partitioned into fewer than $\alpha_k(a)$ intervals.

The proof has two main cases. The first case when $|J_\ell|\geq |I|/10$ for some $\ell\in [r]$ and the second one where $|J_\ell|\leq |I|/10$ for all $\ell\in [r]$. In the first case, we proceed by applying the induction hypothesis to the longest interval $J_{\ell}$, while in the second case we apply the induction hypothesis to all intervals $J_{\ell}$ that are not very short.

\paragraph{Case 1:} Within this case, we have two subcases. Let $\ell$ be the index of the longest interval $J_\ell$, and let $b=\alpha_{k-1}^{(\ell-1)}(a)$, so that $J_\ell=(v_{t/b^3}, v_{t/\alpha_{k-1}(b)^3}]\subseteq I$ (or potentially $J_\ell=(v_{t/b^3}, x]$ if $\ell=r$).
Now, we split into two further subcases based on whether $\alpha_{k-1}(b)\geq \alpha_k(a)$ or $\alpha_{k-1}(b)<\alpha_k(a)$. Let us first consider the subcase where $\alpha_{k-1}(b)\geq \alpha_k(a)$.

\paragraph{Subcase 1.1:}
We begin by performing a cleaning procedure on $J_\ell$. Let $\beta=20C_{k-1}$ and let $H$ be the set of vertices of high rogue degree in $J_\ell$, defined as 
\[H\coloneqq\Big\{v\in J_\ell\setminus B ~\Big|~ \text{the rogue degree of $v$ in $J_\ell\setminus B$ is at least $\beta \frac{st}{\alpha_{k-1}(b)^3}$}\Big\} \setminus \{u_1, \dots, u_{t/b^3}\}.\] 

\begin{claim}\label{claim:upper bound on H}
There are at most $2|J_\ell|/\beta$ vertices in $H$.  
\end{claim}
\begin{proof}
By Lemma~\ref{lemma:rogue edges special sequence}, the number of rogue edges in $J_\ell\setminus B\subseteq (v_{t/b^3}, v_{t/\alpha_{k-1}(b)^3}]\setminus(B_0\cup C(\{u_iv_i\mid i\leq t/\alpha_{k-1}(b)^3\}))$ is at most $\frac{st}{\alpha_{k-1}(b)^3} |J_\ell|$. 
Hence, the set $H$ of vertices with rogue degree higher than $\beta \frac{st}{\alpha_{k-1}(b)^3}$ contains at most $2|J_\ell|/\beta$ vertices.
\end{proof}

Let us now define a new special sequence $v_1', \dots, v_{f'}'$ with respect to the initial interval ending at $v_{t/\alpha_{k-1}(b)^3}$ (which we denote by $S$) and the bad set of vertices 
\[B_0'=\left(B_0\cup H\cup \bigcup_{v_i\in J_\ell} C(\{u_i, v_i\})\cup (C(\cP)\cap J_\ell)\right)\setminus \{u_2, \dots, u_{t/b^3}\}.\] 

\begin{claim}\label{claim:claim 2 in subcase 1.1}
We have $v_i'=v_i$ for all $1\leq i\leq t/b^3$. For all $t/b^3 < i\leq \min\{f', t/\alpha_{k-1}(b)^3\}$, the vertex $v_i'$ does not come before $v_i$.
\end{claim}
\begin{proof}
By Lemma~\ref{lemma:special sequence on different intervals}, $v_1, \dots, v_{t/\alpha_{k-1}(b)^3}$ is the special sequence defined with respect to the initial interval $S$ and the bad set $B_0$. 

Hence, our goal is to apply Lemma~\ref{lemma:unchanged special sequence} to the sequence $v_1, \dots, v_{t/\alpha_{k-1}(b)^3}$, the bad sets $B_0$ and $B_0'$, and $p=t/b^3$. Hence, we verify the conditions of the lemma: $B_0\subseteq B_0'$ is clear. The vertices $v_2, \dots, u_{t/b^3}$ do not belong to $B_0'$ by definition and $v_1, ..., , v_{t/b^3}$ are not in $B_0'$ for the following reasons:

\begin{itemize}
    \item they are not in $B_0$ since they are in the special sequence defined with respect to $B_0$,
    \item they do not belong to $H\cup (C(\cP)\cap J_\ell)\subseteq J_\ell$ since $v_i$ come before $J_\ell$ for $i\leq t/b^3$,
    \item they do not belong to $\bigcup_{v_i\in J_\ell} C(\{u_i, v_i\})$ since $v_i, u_i\in J_\ell$ do not conflict with the previously defined $v_i'$s and $u_i'$s.
\end{itemize}
    Finally, we have defined $B_0'$ such that $C(u_i), C(v_i)\subseteq B_0'$ for all $u_i, v_i\in J_\ell$.

Therefore, Lemma~\ref{lemma:unchanged special sequence} applies and the claim follows directly.
\end{proof}

Our goal is now to apply the induction hypothesis to the interval $J_\ell$, with the parameters $k-1$ and $b$. Note that also $b$ can be made large enough compared to $k-1$ if $a$ is made sufficiently large compared to $k$, since $b\geq \alpha_{k-1}(b)\geq \alpha_k(a)$.

The starting set of bad vertices for the induction hypothesis will be $B_0'$, the special sequence will thus be $v_1', \dots, v_{f'}'$, and the rainbow collection $\cP'$ will be $\cP'=\cP\cup \{u_i'v_i'\mid t/a^3< i\leq t/b^3\}$. Note that this collection is indeed rainbow since the bad set used in defining the sequence $\{v_i'\}_{i=1}^{t/b^3}$ contains $C(\cP)\setminus \{v_1, \dots, u_{t/b^3}\}$, which ensures that no two vertices in $\cP'$ conflict. Further, to verify that the vertices of $\cP'$, except the vertices $u_i', v_i'$, do indeed come from the set $[v_1, v_{t/b^3}]\backslash C(\{u_i', v_i'|i\leq f'\})$, its sufficient to verify that the vertices of $\cP$ except the $u_i, v_i$ come from $[v_1, v_{t/b^3}]\backslash C(\{u_i', v_i'|i\leq f'\})$, which is true since the vertices $u_i', v_i'$ do not conflict with any vertices of $\cP$ besides the start of the paths.

By Claim~\ref{claim:claim 2 in subcase 1.1}, the vertex $v_{t/\alpha_{k-1}(b)^3}'$ (if it is defined) does not come before $v_{t/\alpha_{k-1}(b)^3}$, which is the end of $J_\ell$. 
We thus define a new set of bad vertices $B'=B_0'\cup C(\cP)\cup C(\{u_{i}'v_i'\mid v_i'\in J_\ell\})$, as in the statement of \cref{prop:main induction}.

The set of endpoints $w_i'$ will be the set of endpoints $w_i$ plus the set of $v_i'$ for $t/a^3< i\leq t/b^3$ (if there are more than $t/2b^3$ of them, we can pick an arbitrary subset). Note that $|J_\ell|/10^{k-1}\geq |I|/10^k$ and so the bound on the rogue degree of vertices in $[w_1, v_{t/\alpha_{k-1}(b)^3}]$ still holds since
\[\Delta_R\big([w_1, v_{t/\alpha_{k-1}(b)^3}]\setminus B'\big)\leq \Delta_R\big([w_1, x]\setminus B\big)\leq \frac{|I|}{10^k}\leq \frac{|J_\ell|}{10^{k-1}}.\]

Finally, the following claim verifies $J_\ell$ is long enough to apply the induction hypothesis.

\begin{claim}
The following inequality holds:
\begin{align}\label{eqn:induction hypothesis}
|J_\ell| \geq C_{k-1}\big| B'\cap [w_1,v_{t/\alpha_{k-1}(b)^3}] \big|+C_{k-1}\alpha_{k-1}(b)\Delta_R(J_\ell\setminus B')+C_{k-1} \frac{st}{\alpha_{k-1}(b)}.
\end{align}
\end{claim} 
\begin{proof}    
We prove the inequality in three steps, by bounding each of the terms separately. We start from the final term of (\ref{eqn:induction hypothesis}). Recalling the inequality $\alpha_{k-1}(b)\geq \alpha_k(a)$ which defines this subcase, we find that 
\begin{align}\label{eq:J ell LB}
C_{k-1} \frac{st}{\alpha_{k-1}(b)}\leq C_{k-1} \frac{st}{\alpha_{k}(a)}\leq \frac{C_{k-1}}{C_k} |I|\leq \frac{10 C_{k-1}}{C_{k}}|J_\ell|\leq \frac{|J_\ell|}{4},
\end{align}
where the second inequality in the above chain comes from the assumption of \cref{prop:main induction}, while the third inequality follows from the assumption $|J_\ell|\geq |I|/10$.

To bound the first term of (\ref{eqn:induction hypothesis}), we recall that
\[B'=B_0'\cup C(\cP)\cup C(\{u_{i}'v_i'\mid v_i'\in J_\ell\})= B_0\cup H\cup C(\cP)\cup C(\{u_{i}'v_i'\mid v_i'\in J_\ell\})\cup  C(\{u_iv_i\mid v_i\in J_\ell\}).\]
Hence, we observe that $B'\subseteq B\cup H \cup C(\{u_{i}'v_i'\mid v_i'\in J_\ell\})$ and so
\begin{align*}
\big|B'\cap [w_1,v_{t/\alpha_{k-1}(b)^3}]\big|&\leq \big|B\cap [w_1,v_{t/\alpha_{k-1}(b)^3}]\big|+|B'\setminus B|\\
&\leq \big|B \cap [w_1, x]\big|+|H|+\big|C(\{u_{i}'v_i'\mid v_i'\in J_\ell\})\big|\\
&\leq \frac{|I|}{C_k}+ \frac{2|J_\ell|}{\beta}+2s\cdot \frac{2t}{\alpha_{k-1}(b)^3},
\end{align*}
where we bounded the first term by the assumption of \cref{prop:main induction}, the second by \cref{claim:upper bound on H} and the third by the fact that there are at most $t/\alpha_{k-1}(b)^3$ vertices $v_i'$ defined in $J_\ell$, and each $u_i, v_i$ contributes at most $2s$ vertices to $C(\{u_{i}'v_i'\mid v_i'\in J_\ell\})$. 
From \eqref{eq:J ell LB} we have that $\frac{|J_{\ell}|}{C_{k-1}} \geq \frac{4st}{\alpha_{k-1}(b)}$, and so
\begin{align*}    
\big|B'\cap [w_1,v_{t/\alpha_{k-1}(b)^3}]\big|&\leq\frac{|I|}{C_k}+ \frac{2|J_\ell|}{\beta}+\frac{4st}{\alpha_{k-1}(b)^3}
\leq \left(\frac{10}{C_k}+\frac{2}{\beta}+\frac{C_{k-1}^{-1}}{\alpha_{k-1}(b)^2}\right)|J_\ell|.
\end{align*}
Finally, if $b$ is large enough compared to $k-1$ (which we recall we may assume), we have $\alpha_{k-1}(b)\geq 4$ and so 
\[C_{k-1}\big|B'\cap [w_1,v_{t/\alpha_{k-1}(b)^3}]\big|\leq C_{k-1} \left(\frac{1}{4C_{k-1}}+\frac{1}{10C_{k-1}}+\frac{1}{16C_{k-1}}\right)|J_\ell|< \frac{|J_{\ell}|}{2}.\]

We conclude by bounding the middle term of \eqref{eqn:induction hypothesis}. Since $H\subseteq B'$, no vertex of $J_\ell\setminus B'$ has rogue degree more than $\beta st/\alpha_{k-1}(b)^3$ in $J_\ell\setminus B'$, i.e.\ we have
$\Delta_R(J_\ell\setminus B')\leq \beta st/\alpha_{k-1}(b)^3$. Therefore, if $b$ is large enough compared to $k$ so that $\alpha_{k-1}(b)> 4\beta$, we have by \eqref{eq:J ell LB} that
\[C_{k-1}\alpha_{k-1}(b)\Delta_R(J_\ell\setminus B')\leq C_{k-1}\beta \frac{st}{\alpha_{k-1}(b)^2}\leq \beta\frac{1}{\alpha_{k-1}(b)}|J_\ell|< \frac{|J_\ell|}{4}.\]
In conclusion, we have shown that the first term of \eqref{eqn:induction hypothesis} is bounded by $|J_\ell|/2$, while the remaining two terms are bounded by $|J_\ell|/4$, thus showing that their sum is less than $|J_\ell|$ as claimed.
\end{proof}

Having proven (\ref{eqn:induction hypothesis}), we have verified all the assumptions of the induction hypothesis for $k-1$ and therefore we may apply it to $J_\ell$. We conclude that there exists a collection of paths $\cP''$ extending $\cP'$ which satisfies $\ell(\cP'')\geq \ell(\cP')+\Omega_{k-1}(\frac{|J_\ell|}{s})\geq \ell(\cP)+\Omega_k(\frac{|I|}{s})$, which suffices to complete the proof in this subcase. Note that the only paths of $\cP$ which get extended are those whose endpoints were among the vertices $w_i'$, which are precisely those paths which end in some $w_i$. 
Finally, since $B\subseteq B'$ and the vertices of $\cP''$ come before $x$ and are not from $B'$, they are also not from $B$. This completes the discussion of Subcase 1.1.

\paragraph{Subcase 1.2:}
Recall that in this subcase, we have $\alpha_{k-1}(b)<\alpha_k(a)$. Note that this can only happen if $J_r$ is the longest interval, i.e.\ $\ell=r$. Indeed, if $\ell \leq r-1$ we have $\alpha_{k-1}(b)=\alpha_{k-1}^{(\ell)}(a)\geq \alpha_{k-1}^{(r-1)}(a)$. But by definition of $r$, $\alpha_{k-1}^{(r-1)}(a)\geq \alpha_k(a)$ and so we conclude $\alpha_{k-1}(b)\geq \alpha_k(a)$, which is not the case.

Having concluded $|J_r|\geq |I|/10$, fix an integer $c$ satisfying $\alpha_{k-1}(c)=\alpha_k(a)$.
Note that $c \leq a$ because $\alpha_k(c) \leq \alpha_{k-1}(c) = \alpha_k(a)$. Also, $c \geq b = \alpha_{k-1}^{(r-1)}(a)$ because $\alpha_{k-1}(c) = \alpha_k(a) \geq \alpha_{k-1}(b)$.
These two inequalities imply that $J_r'=[v_{t/c^3}, x]$ is a subinterval of $I$ which contains $J_r$, i.e., $J_r\subseteq J_r'\subseteq I$.

We apply the induction hypothesis on $J_r'$, without any cleaning. We choose parameters $k-1$ and $c$, and the same bad set $B_0'=B_0$. The rainbow collection $\cP'$ used to apply the induction hypothesis will be $\cP'=\cP\cup \{u_iv_i\mid t/a^3<i\leq t/c^3\}$.

We have set up the interval $J_r'$ in such a way that $v_{t/\alpha_{k-1}(c)^3}$ comes after $x$, since $\alpha_{k-1}(c)=\alpha_k(a)$ and we assumed that $v_{t/\alpha_k(a)^3}$ comes after $x$. Then we have 
\begin{align*}
B'&=B_0'\cup C(\cP')\cup C(\{u_iv_i\mid i\in J_r'\})\\
&=B_0\cup C(\cP)\cup C(\{u_iv_i\mid t/a^3<i\leq t/c^3\})\cup C(\{u_iv_i\mid i\in J_r'\})\\
&=B.
\end{align*}
Finally, we add some of the $v_i$s with $t/a^3 \leq i\leq t/c^3$ to the set of endpoints $w_i$ so that we would have $t/(2c^3)$ such endpoints. Note that
\begin{multline*}
|J_r'|\geq |J_r|\geq \frac{|I|}{10}\geq \frac{C_k}{10}|B\cap [w_1, x]| + \frac{C_k}{10} \alpha_k(a)\Delta_R(I\setminus B)+\frac{C_k}{10} \frac{st}{\alpha_k(a)}\\
\geq C_{k-1}|B'\cap [w_1, x]| + C_{k-1} \alpha_{k-1}(c)\Delta_R(J'_r\setminus B)+C_{k-1} \frac{st}{\alpha_{k-1}(c)}.
\end{multline*}
Hence, the induction hypothesis can be applied to $J'_r$, to produce a rainbow collection $\cP''$ extending $\cP'$ with the property that $\ell(\cP'')\geq \ell(\cP')+\Omega_{k-1}(\frac{|J_r'|}{s})\geq \ell(\cP)+\Omega_k(\frac{|I|}{s})$, thus completing the Case 1.

\paragraph{Case 2:} Now we focus on the second case, where we have $\max_{1\leq \ell\leq r}|J_\ell|\leq |I|/10$. In particular, we have $|J_1|, |J_{r-1}|, |J_r|\leq |I|/10$, and this is the only thing we will use. Consequently, we can deduce $\sum_{\ell= 2}^{r-2} |J_\ell|\geq 7/10 |I|$, an so we have either $\sum_{\ell\geq 3, \text{ odd}}^{r-2}|J_\ell|\geq 7/20|I|$ or $\sum_{\ell\geq 2, \text{ even}}^{r-2}|J_\ell|\geq 7/20|I|$. The two situations are analogous so we simply focus on the second one.

Let us think of intervals in pairs $(J_{2\ell-1}, J_{2\ell})$ for $\ell=1, 2, \dots, \lfloor r/2\rfloor-1$. We will go through the pairs $(J_{2\ell-1}, J_{2\ell})$ one by one in reverse, i.e.\ iterate the following procedure for $\ell=\lfloor r/2\rfloor-1, \dots, 1$. The big picture of the argument is that we will apply the induction hypothesis with appropriately defined bad sets to every interval $J_{2\ell}$ which is long enough. For each interval $J_{2\ell}$, this will then give us a rainbow collection of paths $\cQ_{2\ell}$, completely contained within $J_{2\ell-1}\cup J_{2\ell}$ (except maybe for the starts of the paths, i.e.\ the $u_i$s). Once we construct the rainbow collections $\cQ_{2\ell}$, we will show that putting together all rainbow collections constructed from the induction hypothesis still yields a rainbow collection. Furthermore, we will show that there are enough long intervals $J_{2\ell}$ so that the total length of the rainbow collection $\cP\cup\bigcup_\ell \cQ_{2\ell}$ is at least ${\ell(\cP)+}\Omega_k(|I|/s)$, which is what we need to show.

Let us give the details now. We promise that $C(\cQ_{2\ell})$ will not contain any of the vertices $u_i, v_i$ for $i\leq t/\alpha^{(2\ell-2)}_{k-1}(a)^3$.

Before we can apply the induction hypothesis and get the rainbow collection, we perform a cleaning procedure. Let $b=\alpha_{k-1}^{(2\ell-1)}(a)$ and let $H^{(2\ell)}$ be the set of vertices of $J_{2\ell}\setminus B$ which have rogue degree at least $\beta\frac{st}{\alpha_{k-1}(b)^3}$ in $J_{2\ell}\setminus B$, where $\beta=20C_{k-1}$. Observe that $J_{2\ell}\setminus B$ contains at most $\frac{st}{\alpha_{k-1}(b)^3}|J_{2\ell}|$ rogue edges by \cref{lemma:rogue edges special sequence} and therefore $|H^{(2\ell)}|\leq \frac{2}{\beta}|J_{2\ell}|$.

The bad set used to apply the induction hypothesis will be
\[B^{(2\ell)}_0 = \left(B_0 \cup (B\cap J_{2\ell}) \cup C(\{u_iv_i\mid v_i\in J_{2\ell}\})\cup \bigcup_{\ell'>\ell} C(\cQ_{2\ell'})\cup H^{(2\ell)}\right)\setminus\{u_2, \dots, u_{t/b^3}\}.\] 
Let $v_1^{(2\ell)}, \dots, v_{f'}^{(2\ell)}$ be the special sequence of vertices with respect to this bad set $B^{(2\ell)}_0$ and the initial interval ending in $v_{t/\alpha_{k-1}(b)^3}$, i.e.\ up to the end of $J_{2\ell}$. Let $u_2^{(2\ell)},\dots,u_{f'}^{(2\ell)}$ be the corresponding vertices. 

\begin{claim}\label{claim:new special sequence equals old up to J_{2ell}}
For all $i\leq t/b^3$ we have $v_i=v_i^{(2\ell)}$ and $u_i = u_i^{(2\ell)}$. Furthermore, for $t/b^3<i\leq \min\{t/\alpha_{k-1}(b)^3, f'\}$, the vertex $v_i^{(2\ell)}$ does not come before $v_i$. 
\end{claim}
\begin{proof}
Our goal is to apply \cref{lemma:unchanged special sequence} with $p=t/b^3$ and therefore we verify its assumptions. By definition we have $B_0\subseteq B_0^{(2\ell)}$. Also, $B_0^{(2\ell)}$ does not contain any of the vertices $u_2, \dots, u_{t/b^3}$ by definition. For the vertices $v_1, \dots, v_{t/b^3}$, they obviously do not belong to $B\cap J_{2\ell}$ and $H^{(2\ell)}$ since they come before $J_{2\ell}$. Also, they do not belong to $B_0$ or $C(\{u_iv_i|v_i\in J_{2\ell}\})$ since $v_1, \dots, v_{t/\alpha_{k-1}(b)^3}$ is a special sequence with respect to the bad set $B_0$. Finally, they do not belong to $C(\cQ_{2\ell'})$ for any $\ell'\geq \ell$ due to the promise about $\cQ_{2\ell}$. 

The third condition of \cref{lemma:unchanged special sequence} is easy to verify since $C(\{u_iv_i\mid v_i\in J_{2\ell}\})\subseteq B_0^{(2\ell)}$ by definition. Hence, one can use \cref{lemma:unchanged special sequence} to conclude the proof.
\end{proof}

The rainbow collection used to apply the induction hypothesis is $\cP^{(2\ell)}=\cP\cup \{u_iv_i\mid t/a^3\leq i\leq t/b^3\}$. Let us check now this collection is actually rainbow. All of its starting edges come form the same special sequence and so they have different colors. Also, no two vertices of $\cP^{(2\ell)}$ conflict since the vertices of $\cP$ (except for the starting edges) are assumed to come form $[v_1, v_{t/a^3}]\setminus C(\{u_i, v_i\mid 1\leq i\leq f\})$, meaning that they have no conflicts with $\{u_iv_i\mid t/a^3\leq i\leq t/b^3\}$. Finally, we need to check that the vertices of $\cP^{(2\ell)}$, different from $u_i, v_i$, come from $[v_1, v_{t/b^3}]\setminus C(\{u_i^{(2\ell)}, v_i^{(2\ell)}\mid 1\leq i\leq f'\})$. But actually, the only such vertices in $\cP^{(2\ell)}$ belong to $\cP$. These vertices of $\cP$ definitely belong to $[v_1, v_{t/a^3}]\setminus C(\{u_i, v_i\mid 1\leq i\leq f\})$, by assumption, so we need to verify that they do not belong to $C(\{u_i^{(2\ell)}, v_i^{(2\ell)}\mid t/b^3\leq i\leq f'\})$. Since the special sequence $\{v^{(2\ell)}_i\}$ was defined with respect to the bad set $B_0^{(2\ell)}$ which contains $C(\cP)$, this follows and we can indeed apply the induction hypothesis to the rainbow collection $\cP^{(2\ell)}$.

Furthermore, we have $B^{(2\ell)}=B^{(2\ell)}_0\cup C(\cP^{(2\ell)})\cup C(\{u_i^{(2\ell)}v_i^{(2\ell)}\mid v_i^{(2\ell)}\in J_{2\ell}\})$.
At this point, we check if 
\begin{equation}\label{eq:second case induction condition}
|J_{2\ell}|\geq C_{k-1}|B^{(2\ell)}\cap (J_{2\ell}\cup J_{2\ell-1})| +C_{k-1} \alpha_{k-1}(b)\Delta_R(J_{2\ell} \setminus B^{(2\ell)})+\frac{|I|}{10\cdot 2^{r-2\ell}}+\frac{|I|}{10\alpha_k(a)}.
\end{equation}
If this condition does not hold, we fail to apply the induction hypothesis to the interval $J_{2\ell}$ because it is too short and declare the rainbow collection $\cQ_{2\ell}$ to be an empty collection. If the condition (\ref{eq:second case induction condition}) holds, let us argue that we can apply the induction hypothesis to $J_{2\ell}$ with parameters $k-1$ and $b$, together with the collection of paths $\cP^{(2\ell)}$.

Observe that the vertex $v_{t/\alpha_{k-1}(b)^3}^{(2\ell)}$ (if it is defined) comes after the vertex $v_{t/\alpha_{k-1}(b)^3}$, which is the end of the interval $J_{2\ell}$, by Claim~\ref{claim:new special sequence equals old up to J_{2ell}}\footnote{Actually, it is easy to see that $v_{t/\alpha_{k-1}(b)^3}^{(2\ell)}$ is not defined, since the special sequence $\{v_i^{(2\ell)}\}$ is defined only up to the end of $J_{2\ell}$, but this is fine from the perspective of applying the induction hypothesis.}.
We declare the vertices $v_i^{(2\ell)}$ in $J_{2\ell-1}$ to be the vertices $w_i^{(2\ell)}$, with the observation that there are indeed at least $\frac{t}{2b^3}$ of them (if there are more than $t/2b^3$ such vertices, we can take an arbitrary subset of $t/2b^3$ among them), where $b=\alpha_{k-1}^{(2\ell-1)}(a)$. 
Indeed, the number of vertices $v_i$ in intervals $J_s$ with $s < 2\ell-1$ is $\frac{t}{d^3}$, where $d = \alpha_{k-1}^{(2\ell-2)}(a) \geq 2b$, so the number of vertices $v_i \in J_{2\ell-1}$ is $\frac{t}{b^3} - \frac{t}{d^3} \geq \frac{t}{2b^3}$.

Furthermore, every vertex in $I \setminus B$ has rogue degree at most $\frac{|I|}{C_k \alpha_k(a)}$ to $I \setminus B$, by \eqref{eq:main proposition assumption}. Since $|J_{2\ell}|\geq \frac{|I|}{10 \alpha_k(a)}$ (which comes from (\ref{eq:second case induction condition})), it follows that all vertices in $[w_1^{(2\ell)},x^{(2\ell)}] \subseteq I$ (where now $w_1^{(2\ell)} \in J_{2\ell-1}$ and $x^{(2\ell)} = v_{t/\alpha_{k-1}(b)^3}$) have their rogue degree bounded by $|J_{2\ell}|/10^{k-1}$. 

The final condition we need to verify before applying induction is that 
\[|J_{2\ell}|\geq C_{k-1}\Big|B^{(2\ell)}\cap [w_1^{(2\ell)},x^{(2\ell)}]\Big| +C_{k-1} \alpha_{k-1}(b)\Delta_R(J_{2\ell} \setminus B^{(2\ell)})+C_{k-1}\frac{st}{ \alpha_{k-1}(b)}.\]
But we claim this actually follows from \eqref{eq:second case induction condition}. The middle terms of the right-hand side are the same, while for the first terms we have the inequality $|B^{(2\ell)}\cap [w_1^{(2\ell)},x^{(2\ell)}]|\leq |B^{(2\ell)}\cap (J_{2\ell-1}\cup J_{2\ell})|$ (since $[w_1^{(2\ell)},x^{(2\ell)}]\subseteq J_{2\ell-1}\cup J_{2\ell}$). 
Hence, the last thing to verify is $\frac{|I|}{10\cdot 2^{r-2\ell}}\geq C_{k-1}\frac{st}{\alpha_{k-1}(b)}$.
Recalling that $|I|\geq C_k st/\alpha_k(a)$ reduces this inequality to showing $C_k\frac{st}{10\cdot 2^{r-2\ell}\alpha_k(a)}\geq C_{k-1}\frac{st}{\alpha_{k-1}^{(2\ell)}(a)}$, which in turn follows from $\alpha_{k-1}^{(2\ell)}(a)\geq 2^{r-2\ell-2} \alpha_k(a)$. But this last inequality is easy to show, since $\alpha_{k-1}^{(2\ell)}(a)\geq 2^{r-1-2\ell} \alpha_{k-1}^{(r-1)}(a)\geq 2^{r-2-2\ell} \alpha_{k}(a)$. Hence, all conditions are satisfied and we can apply the induction hypothesis to $J_{2\ell}$. 

Applying the induction thus produces an extension $\cP^{(2\ell)}_{\rm ext}$ of the rainbow collection $\cP^{(2\ell)}$. More precisely, the only paths which get extended are the ones ending in $w_i^{(2\ell)}$, i.e.\ the paths $\{u_iv_i\mid v_i\in J_{2\ell-1}\}$. Therefore, let us denote the set of these extended paths which start in $J_{2\ell-1}$ by $\cQ_{2\ell}$, where the induction hypothesis gives us $\ell(\cQ_{2\ell})\geq \Omega_k(|J_{2\ell}|/s)$. 

Now, we verify the promises given about the collection of paths $\cQ_{2\ell}$ at the beginning of the proof.

\begin{claim}
The set $C(\cQ_{2\ell})$ does not contain any of the vertices $u_i, v_i$ for $i\leq t/\alpha_{k-1}^{(2\ell-2)}(a)^3$. 
\end{claim}
\begin{proof}
Note that $\cQ_{2\ell}$ is just a part of a larger rainbow collection $\cP^{(2\ell)}_{\rm ext}$, which contains the edges $u_iv_i$ for all $1\leq i\leq t/\alpha_{k-1}^{(2\ell-2)}(a)^3$. Since $\cP^{(2\ell)}_{\rm ext}$ is a rainbow collection, it cannot have any conflicting pairs of vertices, and therefore $u_i, v_i\notin C(\cQ_{2\ell})$ for $i\leq t/\alpha_{k-1}^{(2\ell-2)}(a)^3$.
\end{proof}

Now comes time to define the extended collection of paths $\cP'=\cP\cup \bigcup_{\ell}\cQ_{2\ell}$. We claim that this collection of paths satisfies all the conclusions that we want. In particular, we need to show that it is a rainbow collection of paths, of total length $\ell(\cP')\geq \ell(\cP)+\Omega_k(|I|/s)$, which does not contain any vertices of $B$, uses only the vertices coming before $x$, and extends only the paths ending at $w_i$. 

The first two of these statements are nontrivial and we prove them through the following two claims. To show that $\cP'$ does not contains vertices of $B$ is easy, since the collections $\cQ_{2\ell}$ are contained in $(J_{2\ell-1}\cup J_{2\ell})\setminus B^{(2\ell)}\subseteq (J_{2\ell-1}\cup J_{2\ell})\setminus B$. Also, since each of the intervals $J_{2\ell}$ comprises only vertices before $x$, so do the collections $\cQ_{2\ell}$. Finally, no paths of $\cP$ get extended and so the final condition holds too. Hence, to complete the proof we only need to show that $\cP'$ is rainbow and has sufficient length, which is the content of the following two claims.

\begin{claim}\label{claim:final sequence is rainbow}
The collection of paths $\cP'=\cP\cup \bigcup_{\ell}\cQ_{2\ell}$ is a rainbow collection.
\end{claim}
\begin{proof}
First, we should check that no two vertices of the paths conflict. By the induction hypothesis, we know that $\cQ_{2\ell}$ shares no vertices with $B^{(2\ell)}$, and $B^{(2\ell)}$ contains both $C(\cP)$ and $C(\cQ_{2\ell'})$ for all $\ell'>\ell$. Hence, $\cQ_{2\ell}$ has no vertices which could conflict with $\cP$ or $\cQ_{2\ell'}$. 

Also, observe that all paths of $\cQ_{2\ell}$ are directed forward except for the first edge which has a rogue color, so still all but at most one path of $\cP'$ are directed forward and start at a rogue edge.

Finally, we need to check that all the starting edges of paths in $\cP'$ have distinct rogue colors. Since $\cQ_{2\ell}$ is a subcollection of a rainbow collection containing $\cP$, there can be no conflict between paths in $\cP$ and $\cQ_{2\ell}$ for any $\ell$.

So the only remaining case is if there exist two paths $P_1\in\cQ_{2\ell}$, $P_2\in \cQ_{2\ell'}$ for some $\ell<\ell'$ starting with the same rogue color. Assume $P_1$ starts at $uv$, $P_2$ starts at $u'v'$ and these two edges have the same rogue color $r$. 

We begin by observing that the edge $uv$ does not appear in the original special sequence $\{u_iv_i\}$. The reason for this is that the rainbow collection $\cP^{(2\ell')}_{\rm ext}$ contains both the edges $u_iv_i$ for $v_i\in J_{2\ell-1}\cup J_{2\ell}$ and the edge $u'v'$, meaning that $u'v'$ has different color than all edges $\{u_iv_i\}$ for which $v_i$ comes before $J_{2\ell'-1}$. So we have an even stronger conclusion: none of the edges $u_iv_i$ for which $v_i$ comes before the end of $J_{2\ell}$ have the color $r=c(uv)=c(u'v')$.

Let $X=C(\{u_iv_i\mid v_i\text{ comes before the end of }J_{2\ell}\})$.
By \cref{lemit:iii}, no vertex of the set $[v_1, v_{t/\alpha_{k-1}^{(2\ell)}(a)^3}]\setminus (B_0\cup X)$ is incident to an edge of color $r$ in the graph induced on $U\setminus (B_0\cup X)$. Hence, we must have $u\in B_0\cup X$ or $v\in B_0\cup X$. 

We now argue that it is not possible for vertices of $B_0\cup X$ to be a part of $\cP^{(2\ell)}_{\rm ext}$, thus deriving a contradiction. 
Since $\cP^{(2\ell)}_{\rm ext}$ contains no vertex in $B_0^{(2\ell)}\supseteq B_0 \cup C(\{u_iv_i\mid v_i\in J_{2\ell}\})$, $u$ and $v$ cannot be in $B_0 \cup C(\{u_iv_i\mid v_i\in J_{2\ell}\})$. Also, the edges $u_iv_i$ for $i\leq t/\alpha_{k-1}^{(2\ell-1)}(b)^3$ are in the collection $\cP^{(2\ell)}_{\rm ext}$, and so $u, v\notin C(\{u_iv_i\mid i\leq t/\alpha_{k-1}^{(2\ell-1)}(b)^3\})$. But then 
\[B_0 \cup C(\{u_iv_i\mid v_i\in J_{2\ell}\})\cup C(\{u_iv_i\mid i\leq t/\alpha_{k-1}^{(2\ell-1)}(b)^3\})=B_0\cup X,\]
so $u, v\notin B_0\cup X$. This proves the claim.
\end{proof}

\begin{claim}
We have $\ell(\cP')\geq \ell(\cP)+\Omega_k\Big(\frac{|I|}{s}\Big)$.
\end{claim}
\begin{proof}
Let us denote by $L$ the set of indices $2\ell$ for which the induction hypothesis was successfully applied to $J_{2\ell}$ and for which, consequently, the collection $\cQ_{2\ell}$ is nonempty.  Note that if $\sum_{2\ell\in L}|J_{2\ell}|\geq |I|/20$ then we are done since $\ell(\cP')\geq \ell(\cP)+\sum_{2\ell\in L}\ell(\cQ_{2\ell})\geq \ell(\cP)+\Omega_k(\sum_{2\ell\in L}|J_{2\ell}|/s)$ by the induction hypothesis. 

The harder case is when $\sum_{2\ell\in L}|J_{2\ell}|\leq |I|/20$. The big picture argument for why the collections $\cQ_{2\ell}$ are still long is the following. The only reason that we could not apply the induction hypothesis to so many of the intervals $J_{2\ell}$ is that too many vertices were declared bad, i.e.\ the sets $B^{(2\ell)}$ were too large. But the main contribution to the size of $B^{(2\ell)}$ comes from $C(\cQ_{2\ell'})$, for $\ell'>\ell$. Hence, it must be that the collections $\cQ_{2\ell'}$ were long enough already.

Let us now turn this vague intuition into a precise computation. Since the total length of the intervals $J_{2\ell}$ with $2\ell\in L$ is at most $|I|/20$, and the total length of all even-indexed interval $J_{2\ell}$ is at least $7|I|/20$ (by the assumptions of Case 2), we conclude $\sum_{ 2\ell\notin L} |J_{2\ell}|\geq \frac{7}{20}|I| -  \frac{1}{20} |I| = \frac{3}{10}|I|$. Thus,

\begin{multline}\label{eqn:1}
\sum_{\ell=1}^{\lfloor r/2\rfloor -1}  \left(C_{k-1}|B^{(2\ell)}\cap (J_{2\ell}\cup J_{2\ell-1})| +C_{k-1} \alpha_{k-1}^{(2\ell)}(a)\Delta_R(J_{2\ell} \setminus B^{(2\ell)})+\frac{|I|}{10\cdot 2^{r-2\ell}}+\frac{|I|}{10\alpha_k(a)}\right)\\
\geq \sum_{ 2\ell\notin L} |J_{2\ell}|\geq \frac{3}{10}|I|. 
\end{multline}
As suggested in our preliminary intuition, the main contribution to the above sum comes from the terms $|B^{(2\ell)}\cap (J_{2\ell}\cup J_{2\ell-1})|$, since the sum of remaining two terms can be easily bounded as follows. Indeed, by summing the geometric series, the contribution of the final two terms can be bounded as
\[
\sum_{\ell= 1}^{\lfloor r/2\rfloor-1} \frac{|I|}{ 10\cdot 2^{r-2\ell}}\leq \frac{|I|}{20}, \hspace{1cm}\sum_{\ell= 1}^{\lfloor r/2\rfloor-1} \frac{|I|}{ 10\alpha_k(a)}\leq \frac{|I|}{10}.
\]
For the middle terms, we have that
\begin{align*}
\sum_{\ell= 1}^{\lfloor r/2\rfloor-1} C_{k-1} \alpha_{k-1}^{(2\ell)}(a) \Delta_R(J_{2\ell} \setminus B^{(2\ell)}) \leq \sum_{\ell= 1 }^{\lfloor r/2\rfloor-1} C_{k-1} \alpha_{k-1}^{(2\ell)}(a) \frac{\beta st}{\alpha_{k-1}^{(2\ell)}(a)^3} \leq 2C_{k-1}\beta \frac{st}{\alpha_{k}(a)^2}\leq \frac{|I|}{1000},
\end{align*}
where in the first inequality we use that all vertices from $J_{2\ell}$ of rogue degree larger than $\frac{\beta st}{\alpha_{k-1}^{(2\ell)}(a)^3}$ are contained in $H^{(2\ell)}\subseteq B^{(2\ell)}$, in the second inequality we  use that the sequence ${1}/{\alpha_{k-1}^{(2\ell)}(a)^2}$ is termwise smaller than the geometric sequence $(1/2)^{\ell}$ and so its sum can be bounded by twice the largest term, which is at most $\frac{1}{\alpha_k(a)^2}$, and in the third inequality we use \eqref{eq:main proposition assumption} and the fact that $a$ is sufficiently large with respect to $k$.
Combining these bounds with \eqref{eqn:1}, we find that 
\begin{align}\label{eqn:2}
C_{k-1}\sum_{\ell=1}^{\lfloor r/2\rfloor -1} |B^{(2\ell)}\cap (J_{2\ell}\cup J_{2\ell-1})|\geq \frac{149}{1000}|I|.
\end{align}
Now our goal is to show that the main contribution to the sum of $|B^{(2\ell)}\cap (J_{2\ell}\cup J_{2\ell-1})|$ actually comes from the sets $\bigcup_{\ell'>\ell} C(\cQ_{2\ell'})$, which are contained in $B_0^{(2\ell)}$. Therefore, we need to recall the definition of $B^{(2\ell)}$, namely $$B^{(2\ell)}=B_0^{(2\ell)}\cup C(\cP) \cup C(\{u_iv_i\mid t/a^3\leq i\leq t/\alpha_{k-1}^{(2\ell-1)}(a)^3\}) \cup C(\{u_i^{(2\ell)}v_i^{(2\ell)}\mid v_i^{(2\ell)}\in J_{2\ell}\}).$$
Moreover, we have 
\[B^{(2\ell)}_0 \subseteq B_0 \cup (B\cap J_{2\ell}) \cup C(\{u_iv_i\mid v_i\in J_{2\ell}\})\cup \bigcup_{\ell'>\ell} C(\cQ_{2\ell'})\cup H^{(2\ell)},\]
and so 
\[B^{(2\ell)}\subseteq B\cup 
C(\{u_i^{(2\ell)}v_i^{(2\ell)}\mid v_i^{(2\ell)}\in J_{2\ell}\}) \cup \bigcup_{\ell'>\ell} C(\cQ_{2\ell'})\cup H^{(2\ell)},\]
where we use the fact $B$ contains the sets $B_0, C(\{u_iv_i\mid v_i\in I\})$ and $C(\cP)$.

Using this upper bound on $B^{(2\ell)}$, we can bound the sum from (\ref{eqn:2}) as follows.
\begin{equation}
\begin{aligned}
\frac{149}{1000}\frac{|I|}{C_{k-1}}\leq \sum_{\ell=1}^{\lfloor r/2\rfloor -1} |B^{(2\ell)}\cap (J_{2\ell}\cup J_{2\ell-1})| 
&\leq \sum_{\ell=1}^{\lfloor r/2\rfloor -1} \left|B \cap \big(J_{2\ell}\cup J_{2\ell-1}\big)\right| \\
&+ \sum_{\ell=1}^{\lfloor r/2\rfloor -1} \left|H^{(2\ell)} \cap \big(J_{2\ell}\cup J_{2\ell-1}\big)\right| \\
&+\sum_{\ell=1}^{\lfloor r/2\rfloor -1}\left|\bigcup_{\ell'>\ell} C(\cQ_{2\ell'}) \cap \big(J_{2\ell}\cup J_{2\ell-1}\big)\right|\\
&+\sum_{\ell=1}^{\lfloor r/2\rfloor -1}\left|C(\{u_i^{(2\ell)}v_i^{(2\ell)}|v_i^{(2\ell)}\in J_{2\ell}\})\cap \big(J_{2\ell}\cup J_{2\ell-1}\big)\right|
\end{aligned}
\label{eqn:3}
\end{equation}
Again, the intuition is that the main term should be coming from the sets $\bigcup_{\ell'>\ell} C(Q_{2\ell'})$. So let us bound the remaining three terms of (\ref{eqn:3}). 

For the first sum, we have $\sum_{\ell=1}^{\lfloor r/2\rfloor -1} \left|B \cap \big(J_{2\ell}\cup J_{2\ell-1}\big)\right|\leq |B\cap I|\leq |I|/C_k$, from the assumption \eqref{eq:main proposition assumption}. As for the second sum, we have a bound on the size of $H^{(2\ell)}$ of the form $|H^{(2\ell)}|\leq \frac{2}{\beta}\ab{J_{2\ell}}$ and so 
\[\sum_{\ell=1}^{\lfloor r/2\rfloor -1} \left|H^{(2\ell)} \cap \big(J_{2\ell}\cup J_{2\ell-1}\big)\right|\leq \frac{2}{\beta} \sum_{\ell=1}^{\lfloor r/2\rfloor -1} \left|J_{2\ell}\right|\leq \frac{|I|}{10 C_{k-1}}\]
The final term can be bounded by invoking \cref{claim:new special sequence equals old up to J_{2ell}}, which implies that the sequence $v_i^{(2\ell)}$ contains at most $\frac{t}{\alpha_{k-1}^{(2\ell)}(a)^3}\leq \frac{t}{\alpha_k(a)^3}$ elements in $J_{2\ell}$, and thus $|C(\{u_i^{(2\ell)}v_i^{(2\ell)}\mid v_i^{(2\ell)}\in J_{2\ell}\})|\leq 2s\cdot \frac{2t}{\alpha_k(a)^3}$. Since we have at most $\alpha_k(a)$ such summands in the last sum of (\ref{eqn:3}) we find that this sum is upper-bounded by $\frac{4st}{\alpha_k(a)^2}\leq \frac{|I|}{1000C_{k-1}}$ using (\ref{eq:main proposition assumption}), and again by ensuring that $a$ is sufficiently large with respect to $k$.

Therefore, we conclude that 
\begin{align*}
    \sum_{\ell=1}^{\lfloor r/2\rfloor -1}\left|\bigcup_{\ell'>\ell} C(\cQ_{2\ell'}) \cap \big(J_{2\ell}\cup J_{2\ell-1}\big)\right|\geq \frac{149}{1000}\frac{|I|}{C_{k-1}}-\frac{|I|}{40C_{k-1}}-\frac{|I|}{10C_{k-1}}-\frac{|I|}{1000C_k}= \frac{23}{1000} \frac{|I|}{C_{k-1}}.
\end{align*}
Using this information, we can lower-bound the total length of the collections $\cQ_{2\ell}$ as follows. 
We first observe that
\begin{equation}\label{eqn:4}
\sum_{\ell=1}^{\lfloor r/2\rfloor -1}\left|\bigcup_{\ell'>\ell} C(\cQ_{2\ell'}) \cap \big(J_{2\ell}\cup J_{2\ell-1}\big)\right|\leq \left|\bigcup_{\ell=1}^{\lfloor r/2\rfloor -1} C(\cQ_{2\ell}) \right|,
\end{equation}
since $\bigcup_{\ell'>\ell} C(\cQ_{2\ell'})\subseteq \bigcup_{\ell'=1}^{\lfloor r/2\rfloor -1} C(\cQ_{2\ell'})$ and since the intervals $J_{2\ell-1}\cup J_{2\ell}$ are disjoint from one another.
Since $C(\cQ_{2\ell})$ contains at most $2s\cdot\ell(\cQ_{2\ell})$ elements, \eqref{eqn:4} yields
\[\sum_{\ell=1}^{\lfloor r/2\rfloor -1}\left|\bigcup_{\ell'>\ell} C(\cQ_{2\ell'}) \cap \big(J_{2\ell}\cup J_{2\ell-1}\big)\right|\leq \sum_{\ell=1}^{\lfloor r/2\rfloor-1} \ab{C(\cQ_{2\ell})}\leq \sum_{\ell=1}^{\lfloor r/2\rfloor -1}2s\cdot \ell(\cQ_{2\ell}).\]
This shows that $\ell(\cP')\geq \ell(\cP)+\sum_{\ell=1}^{\lfloor r/2\rfloor -1}\ell(\cQ_{2\ell})\geq \ell(\cP)+\Omega(|I|/s)$, as claimed.
\end{proof}
The last two claims verify that $\cP'$ satisfies all the required conditions, thus completing the proof of \cref{prop:main induction}.
\end{proof}

\subsection{Endgame}

In this section, we deduce \cref{thm:main} from \cref{prop:main induction}.

\begin{proof}[Proof of Theorem~\ref{thm:main}.]
Our goal is to prove that for any $k\geq 1$, we have $f(\cS, P_t)\leq A_k st\alpha_k(t)$ for some constant $A_k$, but we will prove a seemingly weaker statement $f(\cS, P_t)\leq A_k st\alpha_k(t)^2$. Of course, since $\alpha_k(t)^2\leq O_k(\alpha_{k-1}(t))$, we can derive the statement of the theorem by applying our conclusion with the parameter $k+1$ instead of $k$.

To show that $f(\cS, P_t)\leq A_k st\alpha_k(t)^2$, one needs to argue that in every edge-coloring of $K_N$, where $N= A_k st\alpha_k(t)^2$, there exists either a monochromatic copy of the tree $\cS$ or a rainbow copy of $P_t$. So assume that there is no monochromatic copy of $\cS$ and no rainbow $P_t$ in $K_N$.

Using Lemma~\ref{lemma:substructure}, we can find a set $U_0\subseteq V(K_N)$, a set of rogue colors $R$ and a partition $U_0=U_1\cup\dots\cup U_r$, with the properties listed in Lemma~\ref{lemma:substructure}. As a preliminary cleaning step, we also define $U$ to be the set of vertices of $U_0$ which have rogue degree at most $4st$. Since there are at most $t$ rogue colors (due to Lemma~\ref{lemma:substructure}) and each rogue color spans at most $s|U_0|$ edges in $U_0$, we conclude that there are at most $st|U_0|$ rogue edges in $U_0$. Therefore, at most $|U_0|/2$ vertices can be incident to more than $4st$ rogue edges, showing that $|U|\geq |U_0|/2\geq N/20$.
We sort the vertices of $U$ according to the median ordering, and by \cref{lemma:building from directed paths}, it suffices to find a rainbow collection $\cP$
of total length $\ell(\cP)\geq t$.

We now set up an application of Proposition~\ref{prop:main induction}, from which we will ultimately derive the existence of the large rainbow collection. Let the bad set $B_0$ be empty and define the special sequence $v_1, v_2, \dots, v_f$ with respect to $B_0$ in the initial interval $U$. We know that $f\leq t$, since otherwise the rainbow collection $\cP=\{u_iv_i\}$ would already have length larger than $t$. 

Consider the intervals $I_\ell=(v_{t/\alpha_{k-1}^{(\ell-1)}(t)^3}, v_{t/\alpha_{k-1}^{(\ell)}(t)^3}]$. Out of these intervals, we will consider a subcollection $I_1, \dots, I_r$ where the value of $r$ is chosen depending on $k$ in the following way. First, let $a_0=a_0(k)$ be the minimal $a$ with which \cref{prop:main induction} applies with parameters $k-1$ and $a$. Then, we choose $r$ to be the largest integer satisfying $\alpha_{k-1}^{(r-1)}(t)\geq a_0$ and  $t/\alpha_{k-1}^{(r-1)}(t)^3\leq f$. The first condition is there to ensure that Proposition~\ref{prop:main induction} applies to all intervals $I_\ell$ with $\ell\leq r$, while the second one ensures that the interval $I_r$ is defined at all. The latter condition implies that  $t/\alpha_{k-1}^{(r)}(t)^3<t$,  i.e.\ $r\leq \alpha_k(t)$.

We claim that the vertices of $U$ not contained in the union $\bigcup_{\ell=1}^r I_\ell$ can be covered by $O_k(1)$ dyadic intervals of the form $[v_p, v_{2p}]$. Indeed, if the value of $r$ was constrained by the inequality $t/\alpha_{k-1}^{(r-1)}(t)^3$, then the whole of $U$ is covered by $\bigcup_{\ell=1}^r I_\ell$. On the other hand, if the first condition constrains $r$, then we have $\alpha_{k-1}^{(r)}(t)\leq a_0$, and thus $\alpha_{k-1}^{(r+\alpha_k(a_0))}(t)\leq \alpha_{k-1}^{(\alpha_k(a_0))}(a_0)=1$, implying that $r+\alpha_k(a_0)\geq \alpha_k(t)$. Since $\alpha_k(t)-r\leq \alpha_k(a_0)$, we have $\alpha_{k-1}^{(r)}(t)^3\leq O_k(1)$, and therefore $\frac{f}{t/\alpha_{k-1}^{(r)}(t)^3}\leq O_k(f/t)\leq O_k(1)$.

Since, by Lemma~\ref{lemma:long I_k}, each dyadic interval of the form $[v_p, v_{2p}]$ contains at most $176st$ vertices, the union $\bigcup_{\ell=1}^r I_\ell$ contains all but at most $O_k(st)$ vertices of $U$. In other words, if the constant $A_k$ is large enough, we have $\sum_{\ell=1}^{r}|I_\ell|\geq |U|-O_k(st)\geq |U|/2$.

In particular, this means that for some index $\ell$, we have $|I_\ell|\geq \frac{|U|}{2r}\geq \frac{A_k}{40} st\alpha_k(t)$, since $r\leq \alpha_k(t)$. The goal is now to apply Proposition~\ref{prop:main induction} to the interval $I_\ell$.

As stated above, we define $B_0=\varnothing$ and $\cP=\{v_1, u_2v_2, \dots, u_{t/\alpha_{k-1}^{(\ell-1)}(t)^3}v_{t/\alpha_{k-1}^{(\ell-1)}(t)^3}\}$, and we choose among them an arbitrary set of $t/2\alpha_{k-1}^{(\ell-1)}(t)^3$ endpoints to play the roles of $w_i$. We also set $B=C(\{u_iv_i\mid  v_i\text{ comes before }I_\ell\})$, as required by \cref{prop:main induction}. If $A_k$ is large enough and $a=t/\alpha_{k-1}^{(\ell-1)}(t)^3$, we have 
\begin{align*}
    |I_\ell|\geq \frac{A_k}{40} st \alpha_k(t)\geq 40^k\big|B\cap [w_1, x]\big|+40^k \alpha_k(a) \Delta_R(I_\ell\setminus B)+40^k\frac{st}{\alpha_k(a)},
\end{align*}
since $|B|\leq 4st$ and $\Delta_R(I\setminus B)\leq 4st$. We also have $\Delta_R([w_1, x])\leq 4st\leq |I_\ell|/10^k$. Hence, Proposition~\ref{prop:main induction} applies to $I_\ell$, which suffices to show that there exists a rainbow collection of paths $\cP'$ of length $\ell(\cP')\geq \Omega_k(|I_\ell|/s)\geq t$. This completes the proof.
\end{proof}

\paragraph{Acknowledgments:} We are grateful to Xiaoyu He for helpful discussions on this topic.

% \bibliographystyle{yuval}
% \bibliography{ref}

\end{document}